\newtheorem{theorem}                 {Theorem}      [section]
\newtheorem{proposition}  [theorem]  {Proposition}
\newtheorem{lemma}        [theorem]  {Lemma}
\newtheorem{remark}        [theorem]  {Remark}
\newtheorem{definition}  [theorem]  {Definition}
\def\p{{\mathcal{P}}}
\def\D{{\mathcal{D}}}
\def\Dp{{\mathcal{D}}^\perp}
\def\Ni{{N_\top}}
\def\Na{N_\perp}
\def\j{{\rm j}}
\def\({\big(}
\def\){\big)}
\def\<{\langle}
\def\>{\rangle}
\def\dsi{\partial_{s_i}}
\def\dsj{\partial_{s_j}}
\def\dti{\partial_{t_i}}
\def\dtj{\partial_{t_j}}
\def\dua{\partial_{u_a}}
\def\dub{\partial_{u_b}}
\title{Geometry of ${\mathcal{P}} R$-warped products in para-K\"ahler manifolds}
\author[B.-Y. Chen]{Bang-Yen Chen}
\author[M.~I. Munteanu]{Marian Ioan Munteanu}
\address[B.-Y.~Chen and M.~I.~Munteanu]
{Michigan State University\\
Department of Mathematics\\
Wells Hall\\
48824-1029 East Lansing\\ USA}
\email{bychen (at) math.msu.edu}
\email{marian.ioan.munteanu (at) gmail.com}
\address[M.~I.~Munteanu]
{Al.I. Cuza University of Iasi\\
Faculty of Mathematics\\
Bd. Carol I, no. 11\\
700506 - Iasi\\
Romania}
\date{\today}
\subjclass[2010]{53B25; 53B30; 53C15; 53C20}
\keywords{para-K\"ahler manifold; $\p R$-submanifold; warped product}
\begin{document}

\begin{abstract} In this paper, we initiate the study of $\p R$-warped products in para-K\"ahler manifolds and prove some fundamental results on such submanifolds.  In particular, we establish
a general optimal inequality for $\p R$-warped products in para-K\"ahler manifolds involving only the warping function and the second fundamental form. Moreover, we completely classify $\p R$-warped
products in the flat para-K\"ahler manifold with least codimension which satisfy the equality case of the inequality.

\end{abstract}
\maketitle

\numberwithin{equation}{section}

%%%%%%%%%%%%%%%%%%%%%%%%%%%%%%%%%%%%%%%%%%%%%%%%%%%%%%%%%%%%%%%%%%%%%
%%%%%%%%%%%%%%%%%%%%%%%%%%%%%%%%%%%%%%%%%%%%%%%%%%%%%%%%%%%%%%%%%%%%%

\section{Introduction}

An almost para-Hermitian manifold is a manifold $\widetilde M$ equipped with an almost product structure
$\p\ne \pm I$ and a pseudo-Riemannian metric $\widetilde g$ such that
\begin{align}
\label{1.1}\p^2=I,\;\; \widetilde g(\p X,\p Y)=-\widetilde g(X,Y),
\end{align}
for vector fields $X$, $Y$ tangent to $\widetilde M$, where $I$ is the identity map. Clearly, it follows from \eqref{1.1} that
the dimension of $\widetilde M$ is even and the metric $\widetilde g$ is neutral.
  An almost para-Hermitian manifold  is called  {\it para-K\"ahler}  if it satisfies $\widetilde \nabla \p=0$ identically, where  $\widetilde \nabla$ denotes the Levi Civita connection of $\widetilde M$. We define $||X ||_{2}$ associated with $\widetilde g$ on $\widetilde M$  by $||X||_2=\widetilde g(X,X)$.

 Properties of para-K\"ahler manifolds were first studied  in 1948 by  Rashevski who considered a neutral metric
 of signature $(m,m)$ defined from a potential function on a locally product $2m$-manifold  \cite{rash48}.
 He called such manifolds stratified spaces. Para-K\"ahler manifolds were explicitly defined by  Rozenfeld in 1949 \cite{Roz}. Such manifolds were also defined by Ruse in 1949 \cite{ruse} and studied by  Libermann  \cite{Li} in the context of $G$-structures.

  There exist many para-K\"ahler manifolds, for instance,  it was proved in \cite{H1} that a homogeneous manifold  $\widetilde M=G/H$ of a semisimple Lie group $G$  admits an invariant para-K\"ahler structure  $(\widetilde g,\p)$ if and only if it is a covering of the adjoint orbit  ${\rm Ad}_G h$ of a semisimple element $h$.  (For a very nice survey on para-K\"ahler manifolds, see \cite{E}.)
  Para-K\"ahler manifolds have been applied in supersymmetric field theories as well as in string theory in recent years (see, for instance,
  \cite{cortes,cortes1,cortes2}).

 A pseudo-Riemannian submanifold $M$ of a para-K\"ahler manifold $\widetilde M$ is called {\it invariant} if the tangent bundle of $M$ is invariant under the action of $\p$. $M$ is called {\it anti-invariant}  if $\p$ maps each tangent space $T_pM, \, p\in M,$ into the normal space $T_p^\perp M$. A {\it Lagrangian submanifold} $M$ of a para-K\"ahler manifold $\widetilde M$ is an anti-invariant submanifold satisfying $\dim \widetilde M=2\dim M$. Such submanifolds have been investigated recently in  \cite{nl,c2011,tjm,cm:Chen11}.

 A pseudo-Riemannian submanifold $M$ of a para-K\"ahler manifold $\widetilde M$ is called a
\emph{$\p R$-submanifold} if the tangent bundle $TM$ of $M$ is the direct sum of  
an \emph{invariant} distribution $\D$ and an
\emph{anti-invariant} distribution $\Dp$, i.e.,  $$T(M)=\D\oplus\Dp, \;\; \p\D=\D,\;\;  \p\Dp\subseteq T_p^\perp(M).$$
 
A $\p R$-submanifold is called a \emph{$\p R$-warped product} if it is a warped product $\Ni \times_f \Na$ of an invariant submanifold $\Ni$ and an anti-invariant submanifold $\Na$. 

In this paper we initiate  the  study  of $\p R$-warped products in para-K\"ahler manifolds. 
The basic properties of $\p R$-warped products are given in section 3. 
We establish in section 4 a general optimal inequality for $\p R$-warped products in para-K\"ahler manifolds involving only the warping function and
the second fundamental form.  In section 5, we provide the exact solutions of a PDE system associated with $\p R$-warped products. In the last section, we  classify  $\p R$-warped products $\Ni \times_f \Na$  with least codimension in the flat para-K\"ahler
manifold which verify the equality case of the general  inequality derived in section 4.

%%%%%%%%%%%%%%%%%%%%%%%%%%%%%%%%%%%%%%%%%%%%%%%%%%%%%%%%%%%%%%%%%%%%%%%%%%%%%%%%%%%
%%%%%%%%%%%%%%%%%%%%%%%%%%%%%%%%%%%%%%%%%%%%%%%%%%%%%%%%%%%%%%%%%%%%%%%%%%%%%%%%%%

\section{Preliminaries}

\subsection{Warped product manifolds}

The notion of warped product (or, more generally warped bundle)
was introduced by Bishop and O'Neill in \cite{cm:BON69} in order to
construct a large variety of manifolds of negative curvature. For
example, negative space forms can easily be constructed in this way
from flat space forms. The interest of geometers was to extend the
classical de Rham theorem to warped products. Hiepko proved a result
in \cite{cm:Hie79} which will be used in this paper.

Let us recall some basic results on warped products.
Let $B$ and $F$ be two pseudo-Riemannian manifolds with pseudo-Riemannian metrics
$g_B$ and $g_F$ respectively, and $f$ a positive function on $B$. Consider the product manifold $B\times F$. Let $\pi_1:B\times F\longrightarrow B$ and $\pi_2:B\times F\longrightarrow F$
be the canonical projections.

We define the manifold $M=B\times_f F$ and call it {\it warped
product} if it is equipped with the following warped metric
\begin{equation}
\label{formula2.4}
g(X,Y)=g_B\big(\pi_{1_{*}}\!(X),\pi_{1_{*}}\!(Y)\big)+f^2(\pi_1(p))g_F\big(\pi_{2_{*}}\!(X),\pi_{2_{*}}\!(Y)\big)
\end{equation}
for all $X,Y\in T_p(M)$, $p\in M$, or equivalently,
\begin{equation}
\label{formula2.5} g=g_B+f^2\ g_F.
\end{equation}
The function $f$ is called {\it the warping function}.
For the sake of simplicity we will identify a vector field $X$ on $B$ (respectively, a vector field $Z$ on $F$)
with its lift $\tilde X$ (respectively $\tilde Z$) on $B\times_fF$.

If $\nabla$, $\nabla^B$ and $\nabla^F$ denote the Levi-Civita connections of $M$, $B$ and $F$, respectively,
then the following formulas hold
\begin{equation}
\label{warped_conn_eq}
\begin{array}{l}
\nabla_XY=\nabla^B_XY,\\
\nabla_XZ=\nabla_ZX=X(\ln f)~Z,\\
\nabla_ZW=\nabla^F_ZW-g(Z,W)~\nabla(\ln f)
\end{array}
\end{equation}
where $X,Y$ are tangent to $B$ and $Z,W$ are tangent to $F$. Moreover, $\nabla(\ln f)$ is the gradient
of $\ln f$ with respect to the metric $g$.

%%%%%%%%%%%%%%%%%%%%%%%%%%%%%%%%%%%%%%%%%%%%%%%%%%%%%%%%%%%%%%%%%%%%%%%%%%%%
%%%%%%%%%%%%%%%%%%%%%%%%%%%%%%%%%%%%%%%%%%%%%%%%%%%%%%%%%%%%%%%%%%%%%%%%%%%%

\subsection{Geometry of submanifolds}
Let $M$ be an $n$-dimensional submanifold of $\widetilde{M}$.
We need the Gauss and Weingarten formulas:
$$
{\mathbf{(G)}}\quad
    \widetilde{\nabla}_X Y = \nabla_X Y + \sigma(X,Y) , \qquad
{\mathbf{(W)}}\quad  \widetilde{\nabla}_X \xi = -A_\xi X + \nabla^\perp_X \xi \, ,
$$
for vector fields $X,Y$ tangent to $M$ and $\xi$ normal to $M$, where  $\nabla$ is the induced connection, $\nabla^\perp$ is the normal connection on the normal bundle $T^\perp(M)$,
$\sigma$ is the second fundamental form, and  $A_\xi$ is the
shape operator associated with the normal section $\xi$. The mean curvature vector $H$ of $M$ is defined by $H=\frac{1}{n}{\rm trace h}$.

For later use we recall the equations of Gauss and Codazzi:
$$
\begin{array}{l}
{\mathbf{(EG)}}\quad
     g\(R_{XY}Z,W\)=\widetilde g\(\widetilde R_{XY}Z,W\)+\widetilde g\(\sigma(Y,Z),\sigma(X,W)\)-
                                                         \widetilde g\(\sigma(X,Z),\sigma(Y,W)\),
\\[2mm]
{\mathbf{(EC)}}\quad
(\widetilde R_{XY}Z)^\perp=(\bar\nabla_X\sigma)(Y,Z)-(\bar\nabla_Y\sigma)(X,Z)
\end{array}
$$
for $X,Y,Z$ and $W$ tangent to $M$, where $R$, $\widetilde R$ are the curvature tensors on $M$ and $\widetilde M$,
respectively, $(\widetilde R_{XY}Z)^\perp$ is the normal component of $\widetilde R_{XY}Z$ and $\bar\nabla$ is the
van der Waerden - Bortolotti connection defined as
\begin{equation}
\label{WBconn:eq}
(\bar\nabla_X\sigma)(Y,Z)=\nabla^\perp_X\sigma(Y,Z)-\sigma(\nabla_XY,Z)-\sigma(Y,\nabla_X,Z).
\end{equation}
In this paper the curvature is defined by $R_{XY}=[\nabla_X,\nabla_Y]-\nabla_{[X,Y]}$.

A submanifold is called {\em totally geodesic} if its second fundamental form vanishes identically.
For a normal vector field $\xi$ on $M$, if $A_\xi=\lambda \,
I$, for certain function $\lambda$
on $M$, then $\xi$ is called a {\em umbilical section} (or $M$ is  {\em umbilical with respect to} $\xi$).
If  $M$ is umbilical with respect to every (local) normal vector field, then $M$
is called a {\em totally umbilical submanifold}. A pseudo-Riemannian submanifold is called {\it minimal} if the mean curvature vector $H$ vanishes identically. And it is called
 {\it quasi-minimal} if $H$ is a light-like vector field.

Recall that for a warped product $M=B\times_f F$,  $B$ is totally geodesic and $F$ is totally umbilical in $M$.

\subsection{Para-K\"ahler $n$-plane}

The simplest example of para-K\"ahler manifold is the para-K\"ahler $n$-plane $({\mathbb E}^{2n}_n,{\mathcal P},g_0)$ consisting of the pseudo-Euclidean $2n$-space $\mathbb E^{2n}_n$, the standard flat neutral metric
\begin{align}\label{10.2} g_0=-\text{$\sum$}_{j=1}^n dx_j^2+\text{$\sum$}_{j=1}^n dy_j^2,\end{align} and  the almost product structure  \begin{align}{\mathcal P}=\text{$\sum$}_{j=1}^n \text{\small$\frac{\partial}{\partial y_j}$}\otimes dx_j+\text{$\sum$}_{j=1}^n
\text{\small$\frac{\partial}{\partial x_j}$} \otimes dy_j.\end{align}
We simply denote the para-K\"ahler $n$-plane $({\mathbb E}^{2n}_n,{\mathcal P},g_0)$  by $\p^{n}$. 

%%%%%%%%%%%%%%%%%%%%%%%%%%%%%%%%%%%%%%%%%%%%%%%%%%%%%%%%%%%%%%%%%%%%%%%%%%%%%%%%
%%%%%%%%%%%%%%%%%%%%%%%%%%%%%%%%%%%%%%%%%%%%%%%%%%%%%%%%%%%%%%%%%%%%%%%%%%%%%%%%

\section{$\p R$-submanifolds of para-K\"ahler manifolds}

For any vector field $X$ tangent to $M$, we put $P X = tan(\p X)$ and $F X = nor (\p X)$, where $tan_p$ and $nor_p$ are the natural projections
associated to the direct sum decomposition
$$
T_p (\widetilde{M}) = T_p (M) \oplus T_p^\perp(M) \ , \ p \in M.
$$
Then $P$ is an endomorphism of the tangent bundle $T(M)$
and $F$ is a normal bundle valued 1-form on $M$.
Similarly, for a normal vector field $\xi$, we put
$t\xi=tan(\p \xi)$ and $f\xi=nor(\p \xi)$ for
the tangential and the normal part of $\p \xi$, respectively.

Let $\nu$ denote the orthogonal complement of $\p\Dp$ in $T^\perp(M)$. Then we have
$$
   T^\perp(M)=\p\Dp\oplus\nu.
$$
Notice that $\nu$ is invariant, i.e., $\p\nu=\nu$.

The following proposition characterizes $\p R$-submanifold of para-K\"ahler manifolds.
A similar result is known for $CR$-submanifolds in K\"ahlerian manifolds and 
contact $CR$-submanifolds in Sasakian manifolds. See e.g. \cite{cm:YK83}.

\begin{proposition}
Let $M\to\widetilde M$ be an isometric immersion of a pseudo-Riemannian manifold $M$ into a para-K\"ahler manifold $\widetilde M$. Then a necessary and sufficient condition for $M$ to be a $\p R$-submanifold is that $F\circ P=0$.
\end{proposition}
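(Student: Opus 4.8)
The plan is to derive both implications from the pointwise linear algebra generated by $\p^{2}=I$ together with the decompositions $\p X=PX+FX$ for $X$ tangent to $M$ and $\p\xi=t\xi+f\xi$ for $\xi$ normal to $M$.

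\emph{Necessity.} Suppose $M$ is a $\p R$-submanifold. Since $\p\D=\D\subseteq T(M)$, for every $X\in\D$ the vector $\p X$ is tangent, so $FX=0$ and $PX=\p X\in\D$; since $\p\Dp\subseteq T^{\perp}(M)$, for every $Z\in\Dp$ we get $PZ=0$. Writing an arbitrary tangent vector as $X=X_{1}+X_{2}$ with $X_{1}\in\D$ and $X_{2}\in\Dp$ then gives $PX=PX_{1}=\p X_{1}\in\D$, and since $F$ vanishes on $\D$ this yields $FPX=0$. Thus $F\circ P=0$, and this direction costs nothing more.

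\emph{Sufficiency.} Assume $F\circ P=0$. Applying $\p$ to $X=\p^{2}X$ and separating tangential and normal parts yields $P^{2}X+tFX=X$ and $FPX+fFX=0$ for every tangent $X$; with the hypothesis $FPX=0$ these read $fFX=0$ and $tFX=X-P^{2}X$. Doing the same with $\xi=\p^{2}\xi$ for a normal vector $\xi$ and taking the tangential part gives $Pt\xi+tf\xi=0$. Composing this relation with $F$ and using $fFX=0$ gives $P(tFX)=-t(fFX)=0$; substituting $tFX=X-P^{2}X$ produces $PX-P^{3}X=0$, i.e. $P^{3}=P$. Consequently $P^{2}$ is a smooth field of idempotents, since $(P^{2})^{2}=P^{4}=(P^{3})P=P^{2}$; hence $P^{2}$ has locally constant rank and $T(M)=\operatorname{Im}P^{2}\oplus\ker P^{2}$, while $\ker P^{2}=\ker P$ and $\operatorname{Im}P^{2}=\operatorname{Im}P$ because $P^{3}=P$. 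I would then set $\D:=\operatorname{Im}P$ and $\Dp:=\ker P$ and check the two required properties: for $X=PY\in\D$ we have $FX=(F\circ P)Y=0$, so $\p X=PX=P^{2}Y\in\D$, whence $\p\D\subseteq\D$ and, $\p$ being an involution, $\p\D=\D$; and for $Z\in\Dp=\ker P$ we have $\p Z=FZ\in T^{\perp}(M)$, so $\Dp$ is anti-invariant. Therefore $M$ is a $\p R$-submanifold.

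The single delicate point is extracting $P^{3}=P$: the mechanism is that the two ``mixed'' relations $FP+fF=0$ and $Pt+tf=0$, combined with the hypothesis $F\circ P=0$, force $P\circ(t\circ F)=0$, which together with $t\circ F=I-P^{2}$ gives $P^{3}=P$; everything else is bookkeeping. The only residual technicality---that $\D$ and $\Dp$ are honest constant-rank smooth distributions---is handled by the idempotency of $P^{2}$.
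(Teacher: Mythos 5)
Your proof is correct. The necessity half is identical to the paper's. For sufficiency the underlying input is the same ($\p^2=I$ split into the identities $P^2+tF=I$, $FP+fF=0$, and $Pt+tf=0$), but you organize it differently: the paper simply declares $\D=\{X\in T(M):\p X\in T(M)\}$ and $\Dp=\{Z\in T(M):\p Z\in T^\perp(M)\}$ and leaves the splitting $T(M)=\D\oplus\Dp$ to ``direct computations,'' whereas you extract the identity $P^3=P$, observe that $P^2$ is then an idempotent of locally constant rank, and take $\D=\operatorname{Im}P$, $\Dp=\ker P$. The two constructions agree: under $FP=0$ one has $\ker F=\operatorname{Im}P$ (since $FP=0$ gives one inclusion and $tF=I-P^2$ gives the other), and $\{Z:\p Z\in T^\perp(M)\}=\ker P$; so you are in effect supplying the computation the paper compresses, including the smoothness/constant-rank point it never mentions. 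What your write-up buys is an explicit mechanism ($P^3=P$ via $fF=0$ and $Pt+tf=0$) for the direct-sum decomposition; what the paper's choice buys is that invariance of $\D$ is immediate from its definition ($\p X\in T(M)$ implies $\p(\p X)=X\in T(M)$), whereas you need the extra line $\p(PY)=P^2Y\in\operatorname{Im}P$. One small omission relative to the paper: the paper also notes $\D\perp\Dp$; this is not required by the definition as stated, and in any case it follows at once in your setting since for $X\in\operatorname{Im}P=\ker F$ and $Z\in\ker P$ one has $g(X,Z)=-\widetilde g(\p X,\p Z)=-\widetilde g(PX,FZ)=0$, the two factors being tangent and normal respectively.
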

\proof
For $U$ tangent to $M$ we have the following decomposition
$$
   U=\p^2 U=P^2U+FPU+tFU+fFU.
$$
By identifying the tangent and the normal parts respectively, we find
$$
   P^2+tF=I \quad {\rm and} \quad FP+fF=0.
$$
Suppose that $M$ is a $\p R$-submanifold. After we choose $U=X\in\D$ we have $\p X=PX$ and $FX=0$.
Hence $P^2=I$ and $FP=0$ on $\D$. On the other hand, if $U=Z=\Dp$, we have $PZ=0$. Hence $FP=0$ on $\Dp$ too.

Conversely, suppose that $FP=0$. Put
$$
    \D=\{X\in T(M) : \p X\in T(M)\}\ {\rm and}\
    \Dp=\{Z\in T(M) : \p Z\in T^\perp(M)\}.
$$
Then by direct computations we conclude that $\D$ and $\Dp$ are orthogonal such that $T(M)=\D\oplus\Dp$.
\endproof
The following results from \cite{cm:Chen11} are necessary for our further computations.

\begin{proposition}
Let $M$ be a $\p R$-submanifold of a para-K\"ahler manifold $\widetilde M$. Then
\begin{itemize}
\item[(i)] the anti-invariant distribution $\Dp$ is a non-degenerate integrable distribution;
\item[(ii)] the invariant distribution $\D$ is a non-degenerate minimal distribution;
\item[(iii)] the invariant distribution $\D$ is integrable if and only if $\sigma(PX,Y)=\sigma(X,PY)$,
for all $X,Y\in\D$;
\item[(iv)]  $\D$ is integrable if and only if $\dot\sigma$ is symmetric,
equivalently to $\dot\sigma(PX,Y)=\dot\sigma(X,PY)$. Here $\dot\sigma$ denotes the second fundamental
form of $\D$ in $M$.
\end{itemize}
\end{proposition}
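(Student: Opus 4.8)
The plan is to reduce each of the four statements to the fundamental identity obtained by combining the para-K\"ahler condition $\widetilde\nabla\p=0$ with the Gauss and Weingarten formulas. First I would record the basic formula: for $X,Y$ tangent to $M$, differentiating $\p Y$ and splitting into tangential and normal parts gives
\begin{align}
\label{plan:basic}
\nabla_X PY + t\,\sigma(X,Y) &= P\nabla_X Y + A_{FY}X,\\
\sigma(X,PY) + \nabla^\perp_X FY &= F\nabla_X Y + f\,\sigma(X,Y),\notag
\end{align}
together with the adjoint relation $\widetilde g(A_{FY}X,W) = \widetilde g(\sigma(X,W),FY) = \widetilde g(\sigma(X,W),\p Y)$ for $W$ tangent to $M$, which follows from $\widetilde g(\p\cdot,\cdot)$ being skew. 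These are the only tools needed.

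For (i), the anti-invariance of $\Dp$ and the non-degeneracy of $\widetilde g$ on $\p\Dp$ force non-degeneracy of $\Dp$; integrability comes from checking $\widetilde g([Z,W],X)=0$ for $Z,W\in\Dp$ and $X\in\D$. Writing $\widetilde g([Z,W],X) = \widetilde g(\nabla_Z W - \nabla_W Z, X)$ and pushing $\p$ through via \eqref{plan:basic}, one reduces this to a statement about $\widetilde g(\sigma(Z,X),\p W)$ which is symmetric in $Z\leftrightarrow W$ (because $\sigma$ is symmetric and $FX=0$ for $X\in\D$), so the bracket has no $\D$-component. For (ii), non-degeneracy of $\D$ is automatic once $\Dp$ is non-degenerate and $T(M)=\D\oplus\Dp$ is an orthogonal decomposition (established in Proposition 3.2). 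Minimality of $\D$ means $\sum_i \dot\sigma(e_i,e_i)=0$ where $\dot\sigma$ is the $\Dp$-valued part of $\nabla$ restricted to $\D$; one takes a local $\p$-adapted frame $\{e_i, Pe_i\}$ of $\D$ and shows the $\Dp$-components of $\nabla_{e_i}e_i$ and $\nabla_{Pe_i}Pe_i$ cancel, again by applying $\p$ and using $\widetilde\nabla\p=0$ so that $\nabla_{Pe_i}Pe_i$ picks up a sign.

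For (iii) and (iv), integrability of $\D$ is equivalent to $[X,Y]\in\D$ for $X,Y\in\D$, i.e. the $\Dp$-component of $\nabla_X Y - \nabla_Y X$ vanishes. Projecting onto $\p\Dp\subseteq T^\perp(M)$ after applying $\p$, and using \eqref{plan:basic} plus the fact that $\p X=PX\in T(M)$ with $FX=0$ for $X\in\D$, one gets that the $\Dp$-part of $[X,Y]$ is controlled by $\sigma(PX,Y)-\sigma(X,PY)$; this yields (iii). Then (iv) follows by observing that $\dot\sigma(X,Y)$ is precisely the $\Dp$-component of $\nabla_X Y$ for $X,Y\in\D$, so symmetry of $\dot\sigma$ is the same statement, and the condition rephrases as $\dot\sigma(PX,Y)=\dot\sigma(X,PY)$ once one notes $P$ is an isomorphism of $\D$ and replaces $X$ by $PX$.

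I expect the main obstacle to be bookkeeping with signs and projections: because the metric is neutral and $\widetilde g(\p X,\p Y)=-\widetilde g(X,Y)$, every time $\p$ is moved past an inner product a sign appears, and one must carefully track which component ($\p\Dp$ versus $\nu$) a normal vector lies in. The conceptual content is light—each item is a one-line consequence of \eqref{plan:basic}—but assembling the orthogonality and non-degeneracy claims in (i)–(ii) rigorously, and making sure the frame used for the minimality computation in (ii) is genuinely adapted to both $\p$ and the (possibly indefinite) induced metric, is where care is required. Since these results are quoted from \cite{cm:Chen11}, in the paper itself one would simply cite that reference rather than reproduce the argument.
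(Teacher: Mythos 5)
The paper does not prove this proposition at all: it is quoted verbatim from \cite{cm:Chen11}, so there is no in-paper argument to compare with. Your route (reduce everything to $\widetilde\nabla\p=0$ combined with the Gauss and Weingarten formulas) is the standard one and is, in outline, how the cited source proceeds; the treatments of the integrability statements via the normal part of the basic identity, namely $\sigma(X,PY)=F\nabla_XY+f\sigma(X,Y)$ for $X,Y\in\D$, hence $\sigma(X,PY)-\sigma(PX,Y)=F[X,Y]$, are correct and do yield (iii). However, a few steps do not hold up as written. (a) Your justification of the non-degeneracy of $\Dp$ is circular: since $\widetilde g(\p Z,\p W)=-\widetilde g(Z,W)$, non-degeneracy of $\widetilde g$ on $\p\Dp$ is literally the same statement as non-degeneracy of $g$ on $\Dp$, so it cannot serve as the input. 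The correct argument is the orthogonality one you reserve for (ii): $g(X,Z)=-\widetilde g(\p X,\p Z)=0$ for $X\in\D$, $Z\in\Dp$ because $\p X$ is tangent and $\p Z$ is normal, and an orthogonal direct-sum decomposition of the non-degenerate bundle $T(M)$ forces \emph{both} summands to be non-degenerate. (b) In (iv), the equivalence of ``$\dot\sigma$ symmetric'' with ``$\dot\sigma(PX,Y)=\dot\sigma(X,PY)$'' is not obtained by ``replacing $X$ by $PX$'': that substitution turns symmetry into $\dot\sigma(PX,Y)=\dot\sigma(Y,PX)$, which is not the stated identity. Symmetry of $\dot\sigma$ is directly the statement $[X,Y]^{\Dp}=0$, while the $P$-switch condition must be routed through the geometric link $g(\dot\sigma(X,Y),Z)=-\widetilde g(\sigma(X,PY),\p Z)$ together with the identity from (iii) (applied with $Y$ replaced by $PY$), which shows it is equivalent to $[X,PY]\in\D$ for all $X,Y\in\D$; only then does surjectivity of $P$ on $\D$ give the equivalence with integrability.

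Two smaller slips: the tangential identity should read $\nabla_XPY-A_{FY}X=P\nabla_XY+t\sigma(X,Y)$ (your version has the wrong sign on $t\sigma(X,Y)$; harmless here, since only the normal identity is used), and in the minimality argument for (ii) the $\Dp$-components of $\nabla_{e_i}e_i$ and $\nabla_{Pe_i}Pe_i$ are \emph{equal}, not opposite --- because $\p^2=I$ produces no sign, unlike the K\"ahler case --- and they cancel only in the signed trace $\sum_i\big(\dot\sigma(e_i,e_i)-\dot\sigma(Pe_i,Pe_i)\big)$ coming from the causal characters $g(e_i,e_i)=1$, $g(Pe_i,Pe_i)=-1$ of the neutral metric on $\D$. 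You flag this sign bookkeeping yourself, but the mechanism as you describe it is the K\"ahler one; with the $\epsilon$-weights inserted the computation goes through and the conclusion is correct.
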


Now, let us give some useful formulas.

\begin{lemma}
\label{lemma_3.3}
If $M$ is a $\p R$-submanifold of a para-K\"ahler manifold $\widetilde M$,
 then
 \begin{itemize}
 \item[(a)] $\widetilde g(A_{FZ}U,PX)=g(\nabla_UZ,X)$,
 \item[(b)] $A_{FZ}W=A_{FW}Z$ and $A_{f\xi}X=-A_\xi PX$,
 \end{itemize}
 for all $X,Y\in\D$, $Z,W\in\Dp$, $U\in T(M)$ and $\xi\in\Gamma(\nu)$.
\end{lemma}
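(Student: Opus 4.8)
The plan is to derive all three identities directly from the defining equation $\widetilde\nabla\p = 0$ of a para-K\"ahler manifold, combined with the Gauss and Weingarten formulas and the decompositions $\p X = PX + FX$, $\p\xi = t\xi + f\xi$. The starting point for (a) is to compute $\widetilde\nabla_U(\p Z)$ in two ways for $Z\in\Dp$: on one hand $\widetilde\nabla_U(\p Z) = \p(\widetilde\nabla_U Z) = \p(\nabla_U Z + \sigma(U,Z))$, and on the other hand, since $\p Z = FZ$ is normal, $\widetilde\nabla_U(FZ) = -A_{FZ}U + \nabla^\perp_U(FZ)$ by $\mathbf{(W)}$. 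Taking the $\widetilde g$-inner product of both expressions with $PX$ (which is tangent, since $X\in\D$) kills the normal terms on the right side of the Weingarten expression and, after using $\widetilde g(\p A, \p B) = -\widetilde g(A,B)$ to write $\widetilde g(\p(\nabla_U Z), PX) = \widetilde g(\p(\nabla_U Z), \p X) = -\widetilde g(\nabla_U Z, X) = -g(\nabla_U Z, X)$, together with $\widetilde g(\p\sigma(U,Z), PX) = 0$ since $\p\sigma(U,Z)$ pairs a normal-or-mixed vector against a tangent one appropriately — this needs a small check — one is left with $-\widetilde g(A_{FZ}U, PX) = -g(\nabla_U Z, X)$, i.e. part (a). I expect the only delicate point here to be bookkeeping which components of $\p\sigma(U,Z)$ survive the pairing with $PX$; since $PX\in\D\subseteq TM$ and $FX = 0$, only the tangential part $t\sigma(U,Z)$ contributes, and one must argue $\widetilde g(t\sigma(U,Z), PX) = 0$, which follows because $\widetilde g(\p\sigma(U,Z), PX) = \widetilde g(\p\sigma(U,Z), \p X) = -\widetilde g(\sigma(U,Z), X) = 0$ as $\sigma(U,Z)$ is normal and $X$ is tangent.

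For the first identity in (b), $A_{FZ}W = A_{FW}Z$, I would use that the anti-invariant distribution $\Dp$ is integrable (Proposition, part (i)), so $[Z,W]\in\Dp$ for $Z,W\in\Dp$, hence $\nabla_Z W - \nabla_W Z = [Z,W]$ is tangent to $M$ with no $\D$-component interfering; more directly, I would compute $\widetilde g(A_{FZ}W, X)$ for $X\in\D$ using part (a) applied with $U = W$: $\widetilde g(A_{FZ}W, PX) = g(\nabla_W Z, X)$, and similarly $\widetilde g(A_{FW}Z, PX) = g(\nabla_Z W, X)$; since $P$ is an isomorphism on $\D$, proving $A_{FZ}W = A_{FW}Z$ reduces to showing these shape operators have equal components against all of $TM$, and the $\D$-components differ by $g(\nabla_W Z - \nabla_Z W, X) = g([Z,W], X) = 0$ because $[Z,W]\in\Dp\perp\D$. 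The components of $A_{FZ}W$ along $\Dp$ and along more of $TM$ require the symmetry $\widetilde g(A_{FZ}W, V) = \widetilde g(\sigma(W,V), FZ) = \widetilde g(\sigma(W,V), \p Z)$; swapping the roles of $Z$ and $W$ and using symmetry of $\sigma$ plus $\widetilde\nabla\p = 0$ should close this. This is the step I'd flag as requiring the most care, precisely because one must account for all tangential components, not merely the $\D$-part.

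For the second identity in (b), $A_{f\xi}X = -A_\xi PX$ with $\xi\in\Gamma(\nu)$, I would again differentiate: for $Y\in TM$, $\widetilde g(A_{f\xi}X, Y) = \widetilde g(\sigma(X,Y), f\xi) = \widetilde g(\sigma(X,Y), \p\xi)$ since $t\xi = 0$ (because $\nu$ is invariant, $\p\xi\in\nu$ is normal). Now $\widetilde g(\sigma(X,Y), \p\xi) = -\widetilde g(\p\sigma(X,Y), \xi)$, and using $\widetilde\nabla\p = 0$ together with $\mathbf{(G)}$ and $\mathbf{(W)}$ one gets $\p\sigma(X,Y)$ appearing in $\widetilde\nabla_X(\p Y) = \p\widetilde\nabla_X Y = \p\nabla_X Y + \p\sigma(X,Y)$; matching the normal parts of $\widetilde\nabla_X(\p Y) = \widetilde\nabla_X(PY + FY)$ expresses $\mathrm{nor}(\p\sigma(X,Y))$ in terms of $\nabla^\perp_X FY$, $\sigma(X, PY)$, and $F\nabla_X Y$. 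Pairing with $\xi\in\nu$ and noting $FY$ and $F\nabla_X Y$ lie in $\p\Dp\perp\nu$, only the $\sigma(X,PY)$-type term survives, yielding $\widetilde g(\sigma(X,Y),\p\xi) = -\widetilde g(\sigma(X,PY),\xi) = -\widetilde g(A_\xi PX, Y)$ after one more use of symmetry; hence $A_{f\xi}X = -A_\xi PX$. Throughout, the main recurring obstacle is keeping straight which pieces of $\p(\text{tangent})$ are tangent versus normal and which pieces of $\p(\text{normal})$ land in $\p\Dp$ versus $\nu$; once that decomposition bookkeeping is pinned down, each identity falls out of a single application of $\widetilde\nabla\p = 0$ and the Gauss–Weingarten formulas.
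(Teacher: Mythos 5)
Part (a) of your proposal is correct, and since the paper records this lemma without proof (it is quoted as a standard formula, cf.\ \cite{cm:Chen11}), the only issue is whether your derivations of (b) close. For the first identity $A_{FZ}W=A_{FW}Z$ your plan works but is a detour: the same computation you already carried out in (a) gives all components at once, namely for every $V\in T(M)$,
\[
\widetilde g(A_{FZ}W,V)=\widetilde g(\sigma(V,W),\p Z)
=-\widetilde g\big(\widetilde\nabla_V(\p W),Z\big)
=\widetilde g\big(\p W,\widetilde\nabla_VZ\big)
=\widetilde g(A_{FW}Z,V),
\]
using $\widetilde g(\p A,B)=-\widetilde g(A,\p B)$, $\widetilde\nabla\p=0$ and $\widetilde g(\p W,Z)=0$; so there is no need to isolate the $\D$-components via integrability of $\Dp$ and then leave the remaining components to ``swapping $Z$ and $W$ and symmetry of $\sigma$'' --- as written, that last step is only a statement of intent, and the missing content is exactly the chain displayed above.

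The genuine gap is in your argument for $A_{f\xi}X=-A_\xi PX$. You discard the term $\widetilde g(\nabla^\perp_X FY,\xi)$ on the grounds that $FY\in\p\Dp$ and $\p\Dp\perp\nu$; but orthogonality of the two subbundles does not make $\nabla^\perp_X(FY)$ orthogonal to $\nu$ --- the normal connection does not preserve $\p\Dp$ in general, which is precisely why Theorem~\ref{ineq:th} must impose $\nabla^\perp(\p\Na)\subseteq\p\Na$ as an extra hypothesis. Concretely, taking $Y=Z\in\Dp$ (so $PY=0$) your computation would force $\widetilde g(A_{f\xi}X,Z)=0$, whereas the correct value is $-\widetilde g(\sigma(PX,Z),\xi)$, whose vanishing for all $X,Z,\xi$ is equivalent to $\sigma(\D,\Dp)\subseteq\p\Dp$ --- exactly the conclusion the paper can only draw under that additional hypothesis. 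A second defect: even where the dropped term is absent ($Y\in\D$), passing from $-\widetilde g(\sigma(X,PY),\xi)$ to $-\widetilde g(A_\xi PX,Y)=-\widetilde g(\sigma(PX,Y),\xi)$ is not ``symmetry of $\sigma$''; moving $P$ from one slot of $\sigma$ to the other is, in general, tied to the integrability of $\D$ (Proposition 3.2\,(iii)), and while it does hold for the $\nu$-components, that itself requires a further $\widetilde\nabla\p=0$ computation. The repair is to differentiate $\p X$ rather than $\p Y$: for arbitrary $U\in T(M)$, since $\p X=PX$ is tangent and $\p\xi=f\xi$ is normal ($\nu$ being invariant),
\[
\widetilde g(A_{f\xi}X,U)=\widetilde g\big(\widetilde\nabla_UX,\p\xi\big)
=-\widetilde g\big(\widetilde\nabla_U(PX),\xi\big)
=-\widetilde g\big(\sigma(U,PX),\xi\big)
=-\widetilde g(A_\xi PX,U),
\]
where no $F$-term and no $\nabla^\perp$-term ever appears; this yields the identity against every tangent vector, as the operator statement requires.
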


We need the following for later use.

\begin{proposition}
Let $M$ be a $\p R$-submanifold of a para-K\"ahler manifold $\widetilde M$. Then
\begin{itemize}
 \item[(i)] the distribution $\Dp$ is totally geodesic if and only if
\begin{equation}
\label{Dp_tg:eq}
\widetilde g(\sigma(\D,\Dp),\p\Dp)=0
\end{equation}
\item[(ii)] the distribution $\D$ is totally geodesic if and only if
\begin{equation}
\label{D_tg:eq}
\widetilde g(\sigma(\D,\D),\p\Dp)=0
\end{equation}
\item[(iii)]  $\D$ is totally umbilical if and only if there exists $Z_0\in\Dp$such that
\begin{equation}
\label{Dp_tu:eq}
             \sigma(X,Y)=g(X,PY)~FZ_0\ ({\rm mod}~\nu)\ ,\ \forall~X,Y\in\D.
\end{equation}
\end{itemize}
\end{proposition}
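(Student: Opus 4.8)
The plan is to characterize each geometric condition by unwinding the definition of ``totally geodesic'' (resp.\ ``totally umbilical'') distribution in terms of $g(\nabla_X Y, \cdot)$ and then converting the tangential data into normal data using the para-K\"ahler parallelism $\widetilde\nabla\p = 0$ together with the Gauss and Weingarten formulas and Lemma~\ref{lemma_3.3}. Recall that $\Dp$ is totally geodesic in $M$ exactly when $g(\nabla_Z W, X) = 0$ for all $Z,W\in\Dp$ and $X\in\D$, and similarly $\D$ is totally geodesic exactly when $g(\nabla_X Y, Z) = 0$ for all $X,Y\in\D$, $Z\in\Dp$ (here I use that both distributions are non-degenerate, by the previous proposition, so vanishing of these mixed components of $\nabla$ is the right condition).

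For part (i), I would start from $g(\nabla_Z W, X)$ with $Z,W\in\Dp$, $X\in\D$, and rewrite it via part (a) of Lemma~\ref{lemma_3.3}: since $g(\nabla_Z W, X) = g(\nabla_W Z, X) + g([Z,W],X)$ and $\Dp$ is integrable (Proposition on $\p R$-submanifolds, part (i)), $[Z,W]\in\Dp$, so $g([Z,W],X)=0$; hence $g(\nabla_Z W,X) = g(\nabla_W Z, X) = \widetilde g(A_{FZ}W, PX)$. Now $PX$ runs over all of $\D$ as $X$ does (since $P|_\D$ is an isomorphism, $P^2 = I$ on $\D$), so $\Dp$ is totally geodesic iff $\widetilde g(A_{FZ}W, X') = 0$ for all $Z,W\in\Dp$, $X'\in\D$, i.e.\ iff $\widetilde g(\sigma(X',W), FZ) = \widetilde g(\sigma(\D,\Dp),\p\Dp) = 0$. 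This gives \eqref{Dp_tg:eq}. For part (ii), I would similarly compute $g(\nabla_X Y, Z)$ for $X,Y\in\D$, $Z\in\Dp$; here I expect to use $\widetilde\nabla_X(\p Y) = \p\widetilde\nabla_X Y$, expand both sides with Gauss/Weingarten, and take the appropriate component. Concretely, $\widetilde g(\widetilde\nabla_X Y, \p Z) = -\widetilde g(\p\widetilde\nabla_X Y, Z) = -\widetilde g(\widetilde\nabla_X \p Y, Z) = -\widetilde g(\nabla_X PY, Z) = g(\nabla_X Z, PY)$ after using $g([X,Z]\text{-terms})$ or Lemma~\ref{lemma_3.3}(a) again; comparing with the totally-geodesic condition $g(\nabla_X Y, Z) = 0$ and noting $\widetilde g(\widetilde\nabla_X Y, \p Z) = \widetilde g(\sigma(X,Y),\p Z)$, the condition becomes $\widetilde g(\sigma(\D,\D),\p\Dp) = 0$, which is \eqref{D_tg:eq}. (I will need to be a little careful that replacing $Y$ by $PY$ still ranges over $\D$ — it does — and to track a possible sign from $\widetilde g(\p\cdot,\p\cdot) = -\widetilde g(\cdot,\cdot)$.)

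For part (iii), ``$\D$ totally umbilical in $M$'' means $\sigma^\D(X,Y) = g(X,Y)\,\eta$ for all $X,Y\in\D$, where $\sigma^\D$ is the $\Dp$-valued second fundamental form of $\D$ in $M$ and $\eta\in\Dp$ is the mean-curvature-type vector field; equivalently $g(\nabla_X Y, Z) = g(X,Y)\,g(\eta,Z)$ for all $Z\in\Dp$. By the computation in part (ii), $g(\nabla_X Y, Z) = \widetilde g(\sigma(X,PY),\p Z)$ (modulo the sign bookkeeping), so the umbilicity condition reads $\widetilde g(\sigma(X,PY),\p Z) = g(X,Y)\,g(\eta,Z) = g(PX,PY)\cdot(\text{something})$, and re-parametrizing $Y\mapsto PY$ turns $g(X,Y)$ into $g(X,PY)$ on the right. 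Setting $Z_0 := \eta$ (or its appropriate $\p$-image), this says precisely that the $\p\Dp$-component of $\sigma(X,Y)$ equals $g(X,PY)\,FZ_0$, i.e.\ $\sigma(X,Y) = g(X,PY)\,FZ_0 \pmod\nu$, which is \eqref{Dp_tu:eq}; conversely, reading the same chain backwards recovers umbilicity of $\D$.

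The main obstacle I anticipate is purely organizational rather than conceptual: getting the signs right. Because $\widetilde g$ is neutral and $\widetilde g(\p U,\p V) = -\widetilde g(U,V)$, every time I move $\p$ across the metric I pick up a minus sign, and I also differentiate $\p Y$ using $\widetilde\nabla\p = 0$, so I must carefully combine these with the signs already built into Lemma~\ref{lemma_3.3} (note part (a) has no sign, which is itself a consequence of one such cancellation). The other point requiring a word of care is the reduction ``modulo $\nu$'': the identity $FP = 0$ and $P^2 = I$ on $\D$ guarantee that $FX$ for $X\in\D$ lies in $\p\Dp$ (indeed $\p X = PX + FX$ with $PX\in\D$, and applying $\p$ shows $FX = 0$ for $X\in\D$ — so in fact $\sigma(\D,\D)$ has no intrinsic reason to avoid $\nu$, which is exactly why \eqref{Dp_tu:eq} is only an equation mod $\nu$). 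I would flag this explicitly so the ``$({\rm mod}~\nu)$'' is not mistaken for a loss of information: umbilicity of $\D$ as a submanifold of $M$ only constrains the $\p\Dp$-part of $\sigma$, since $\nu$-valued components of $\sigma(\D,\D)$ do not enter $\nabla_X Y$.
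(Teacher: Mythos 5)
Your argument is correct and is exactly the ``classical computation'' that the paper itself omits, referring instead to \cite{cm:Chen81} and \cite{cm:Mun05}: all three parts reduce, as you do, to the identity $g(\nabla_XY,Z)=-\widetilde g\(\sigma(X,PY),FZ\)$ (equivalently Lemma~\ref{lemma_3.3}(a)) coming from $\widetilde\nabla\p=0$ together with the Gauss--Weingarten formulas, plus the facts that $P$ is an involution of $\D$ and that $F$ is injective on the non-degenerate distribution $\Dp$. The sign bookkeeping you flag does close up, and your reading of (iii) through the $\Dp$-valued second fundamental form of $\D$ in $M$ (with $Z_0=\eta$) is the intended one, so nothing essential is missing.
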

\proof This can be proved by
classical computations: see e.g. \cite{cm:Chen81} or \cite{cm:Mun05}.
\endproof

\subsection{$\p R$-products}
A $\p R$-submanifold of a para-K\"ahler manifold  is called a \emph{$\p R$-product}
if it is locally a direct product $\Ni\times \Na$ of an invariant submanifold $\Ni$ and an anti-invariant
submanifold $\Na$.

The next result characterizes $\p R$-products in terms of the operator $P$.

\begin{proposition}[Characterization]
A $\p R$-submanifold of a para-K\"ahler manifold is a $\p R$-product if and only if $P$ is parallel.
\end{proposition}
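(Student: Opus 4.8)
The plan is to recast the parallelism of $P$ as a condition on the second fundamental form $\sigma$ of $M$ in $\widetilde M$, and then to feed the resulting data into a de~Rham--Hiepko type splitting theorem. The computational heart is a single identity: applying $\widetilde\nabla_U$ to $\p V = PV + FV$, using the para-K\"ahler condition $\widetilde\nabla\p = 0$ together with the Gauss and Weingarten formulas $\mathbf{(G)}$ and $\mathbf{(W)}$, and comparing tangential components, one obtains
\[
   (\nabla_U P)V \;=\; A_{FV}U + t\sigma(U,V), \qquad U, V \in \Gamma(T(M))
\]
(the normal components give a companion formula for $\bar\nabla F$, which we will not need). I would establish this first; the rest of the argument is read off from it together with Propositions~3.2 and~3.4.

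For the "only if" part, assume $\nabla P = 0$. Taking $V = X \in \D$, where $FX = 0$, the identity collapses to $(\nabla_U P)X = t\sigma(U,X)$, so $t\sigma(U,X) = 0$, equivalently $\widetilde g(\sigma(U,X),\p\Dp) = 0$, for every $X \in \D$ and $U \in T(M)$. Decomposing $U$ along $\D$ and $\Dp$ and comparing with parts (i) and (ii) of Proposition~3.4, this is exactly the statement that both $\D$ and $\Dp$ are totally geodesic distributions. A one-line metric argument upgrades "totally geodesic" to "parallel": for $X \in \Gamma(\D)$ and $Z, W \in \Gamma(\Dp)$ one has $g(\nabla_Z X, W) = -g(X, \nabla_Z W) = 0$ because $\nabla_Z W \in \Gamma(\Dp)$, and symmetrically $\nabla_X W \in \Gamma(\Dp)$, so that $\nabla_U \Gamma(\D) \subseteq \Gamma(\D)$ and $\nabla_U \Gamma(\Dp) \subseteq \Gamma(\Dp)$. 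In particular both distributions are integrable, and they are non-degenerate by Proposition~3.2; the local (pseudo-Riemannian) de~Rham decomposition theorem --- equivalently Hiepko's theorem \cite{cm:Hie79} with constant warping function --- then realizes $M$ locally as a direct product $\Ni \times \Na$, where $\Ni$ is an integral manifold of $\D$ and $\Na$ one of $\Dp$. Since $\p\D = \D$ the factor $\Ni$ is invariant, and since $\p\Dp \subseteq T^\perp(M) \subseteq T^\perp_{\widetilde M}\Na$ the factor $\Na$ is anti-invariant; hence $M$ is a $\p R$-product.

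For the converse, if $M = \Ni \times \Na$ is a $\p R$-product then $\D$ and $\Dp$ are the tangent distributions of its two factors, which are totally geodesic in the direct product; thus $\D$ and $\Dp$ are totally geodesic --- so $t\sigma(U,X) = 0$ for all $X \in \D$ by Proposition~3.4 --- and parallel, in particular $\nabla_U \Gamma(\Dp) \subseteq \Gamma(\Dp)$. Substituting back: for $V = X \in \D$ we get $(\nabla_U P)X = t\sigma(U,X) = 0$, while for $V = Z \in \Dp$ we have $PZ = 0$, hence $(\nabla_U P)Z = -P(\nabla_U Z) = 0$ since $\nabla_U Z \in \Dp$. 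Therefore $\nabla P = 0$. The one point requiring care is the applicability of the splitting theorem, i.e.\ the simultaneous integrability, non-degeneracy and parallelism of $\D$ and $\Dp$; but integrability and non-degeneracy are handed to us by Proposition~3.2, and parallelism is precisely the geometric content extracted from the displayed identity, so once that identity is in place the proof is essentially bookkeeping.
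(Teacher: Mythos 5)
Your proof is correct and is essentially the argument the paper has in mind via its citations to Chen and Munteanu: the identity $(\nabla_UP)V=A_{FV}U+t\sigma(U,V)$ coming from $\widetilde\nabla\p=0$ together with the Gauss--Weingarten formulas, the total-geodesy criteria of Proposition 3.4 combined with non-degeneracy from Proposition 3.2, and the de Rham--Hiepko splitting. Only a cosmetic remark: your ``only if'' and ``converse'' labels are interchanged relative to the statement, but both implications are in fact proved.
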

\proof
By straightforward computations (as in \cite[Theorem 4.1]{cm:Chen81} or \cite[Theorem 2.2]{cm:Mun05})
we may prove that
$$(\nabla_UP)V=\nabla_U(PV)-P\nabla_UV=0\ ,\ \forall\; U,V\in\chi(M),
$$ which implies the desired result.
\endproof

The following result was proved in \cite[page 224]{cm:Chen11}.

\begin{proposition} \label{PR} Let $N_{\top}\times N_{\perp}$ be a ${\mathcal P}R$-product of the para-K\"ahler $(h+p)$-plane $\p^{h+p}$ with $h=\frac{1}{2}\dim N_{\top}$ and $p=\dim N_{\perp}$.
If $N_{\perp}$ is either spacelike or timelike, then the ${\mathcal P}R$-product  is an open part of a direct product of a para-K\"ahler $h$-plane $\p^{h}$ and a Lagrangian submanifold $L$ of $\p^{p}$, i.e., 
$$ \Ni\times \Na\subset \p^{h}\times L\subset \p^{h}\times \p^{p}=\p^{h+p}.$$
\end{proposition}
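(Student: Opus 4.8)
The plan is to adapt to the para-K\"ahler setting the classical argument of Chen for $CR$-products in complex Euclidean spaces. Write $\D=T\Ni$ for the invariant distribution (real rank $2h$) and $\Dp=T\Na$ for the anti-invariant one (rank $p$), so that $T(M)=\D\oplus\Dp$ orthogonally and $T^\perp(M)=\p\Dp\oplus\nu$. The first observation is a dimension count: since $\dim\p^{h+p}=2(h+p)$, the normal bundle $T^\perp(M)$ has rank $2(h+p)-(2h+p)=p=\dim\p\Dp$; hence $\nu=\{0\}$ and $T^\perp(M)=\p\Dp$. The hypothesis that $\Na$ is spacelike or timelike guarantees that $\Dp$, hence by \eqref{1.1} also $\p\Dp$, is definite, in particular nondegenerate; this will be used in the next step.

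Next I would use that a $\p R$-product is, by definition, a \emph{direct} product $\Ni\times\Na$, so that the formulas \eqref{warped_conn_eq} hold with $f\equiv\mathrm{const}$: both $\D$ and $\Dp$ are totally geodesic in $M$, and $\nabla_XZ=\nabla_ZX=0$ for $X\in\Gamma(\D)$, $Z\in\Gamma(\Dp)$. By \eqref{D_tg:eq} and \eqref{Dp_tg:eq} this yields $\widetilde g(\sigma(\D,\D),\p\Dp)=0$ and $\widetilde g(\sigma(\D,\Dp),\p\Dp)=0$. Since $\sigma$ takes values in $\p\Dp$ and $\p\Dp$ is nondegenerate, this forces
$$\sigma(\D,\D)=0,\qquad \sigma(\D,\Dp)=0,$$
i.e. $\sigma(\D,T(M))=0$. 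Feeding this into the Gauss formula, for $X\in\Gamma(\D)$ and any $U\in\Gamma(T(M))$ we get $\widetilde\nabla_UX=\nabla_UX+\sigma(U,X)=\nabla_UX\in\Gamma(\D)$ (because $\D$ is totally geodesic in $M$ and $\nabla_ZX=0$ for $Z\in\Dp$). Hence $\D$ is a $\widetilde\nabla$-parallel subbundle of $T\widetilde M$ along $M$, and so is its orthogonal complement $\Dp\oplus\p\Dp$.

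Now I would invoke the flatness of $\p^{h+p}$: along the connected manifold $M$, the parallel subbundle $\D$ is a fixed $\p$-invariant subspace $V\subset\mathbb E^{2(h+p)}_{h+p}$ (necessarily of neutral signature $(h,h)$), and $\Dp\oplus\p\Dp$ is the fixed $\p$-invariant subspace $V^\perp$ of signature $(p,p)$. Translating a chosen base point of $M$ to the origin, we obtain the orthogonal direct sum decomposition
$$\mathbb E^{2(h+p)}_{h+p}=V\oplus V^\perp=\p^{h}\times\p^{p}.$$
To identify the factors, note that at every $x\in M$ one has $T_xM=\D_x\oplus\Dp_x=V\oplus\Dp_x$ with $\Dp_x\subset V^\perp$. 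Thus the constant distribution $V$ is tangent to $M$, so $M$ lies in a union of affine cosets of $V$; reorganising, $M\subset V\times L$, where $L:=\pi_{V^\perp}(M)\subset V^\perp$ is the projection of $M$ onto the second factor. One checks that $L$ is a $p$-dimensional submanifold whose tangent spaces are carried over from $\Dp$ by $\pi_{V^\perp}$; being anti-invariant of dimension $p=\tfrac12\dim\p^{p}$, $L$ is a Lagrangian submanifold of $\p^{p}$. Moreover each slice $\Ni\times\{q\}$, having vanishing second fundamental form $\sigma(\D,\D)=0$, is totally geodesic in $\p^{h+p}$, $\p$-invariant, and of real dimension $2h$, hence an open part of $\p^{h}=V$. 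Collecting everything, $\Ni\times\Na=M\subset\p^{h}\times L\subset\p^{h}\times\p^{p}=\p^{h+p}$, as asserted.

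The technical heart of the argument is the last paragraph: passing from the parallelism of $\D$ (and of $\Dp\oplus\p\Dp$) along $M$ to a genuine ambient product decomposition that is simultaneously compatible with both slicings of $M$. This is where the flatness of $\p^{h+p}$ and the fact that $M$ is a true (unwarped) product are really consumed. By contrast the crucial vanishing $\sigma(\D,T(M))=0$, which in the $CR$-case of higher codimension would require Codazzi and curvature input, here is immediate once one observes that $\nu=\{0\}$ and $\p\Dp$ is nondegenerate.
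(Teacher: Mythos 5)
Your argument is correct, but note that the paper itself does not prove Proposition \ref{PR} at all: it is quoted from \cite[page 224]{cm:Chen11}, so there is no internal proof to match. What you supply is a sound, self-contained proof in exactly the setting of the statement (least codimension), following the classical Chen $CR$-product strategy. The chain of steps checks out: the dimension count $\operatorname{rank}T^\perp(M)=p$ forces $\nu=\{0\}$; since the direct product makes both $\D$ and $\Dp$ totally geodesic in $M$ with $\nabla_X Z=0$ for mixed arguments, the characterizations \eqref{D_tg:eq}--\eqref{Dp_tg:eq} (or directly Lemma \ref{lemma_3.3}(a)) give $\widetilde g(\sigma(\D,T(M)),\p\Dp)=0$, and the definiteness of $\Dp$ (hence of $\p\Dp$) kills $\sigma(\D,\D)$ and $\sigma(\D,\Dp)$ outright, with no Codazzi input needed; then $\widetilde\nabla$-parallelism of $\D$ and of $\Dp\oplus\p\Dp$ along $M$, combined with flatness, yields the constant $\p$-invariant splitting $V\oplus V^\perp$, and writing the immersion as $\alpha(x)+\beta(q)$ with $\alpha_*$ ranging in $V$ and $\beta_*$ in $V^\perp$ gives $M\subset\p^h\times L$ with $L=\beta(\Na)$ anti-invariant of dimension $p$, i.e.\ Lagrangian in $\p^{p}$. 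Two cosmetic points: each slice $\Ni\times\{q\}$ is an open part of a \emph{coset} $\beta(q)+V$ rather than of $V$ itself (your translation of a base point handles only one slice, but the product identification you set up already absorbs this); and since Proposition 3.2 already guarantees $\Dp$ nondegenerate, your argument consumes the spacelike/timelike hypothesis only through nondegeneracy of $\p\Dp$, which is consistent with, and slightly weaker than, what the hypothesis provides.
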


%%%%%%%%%%%%%%%%%%%%%%%%%%%%%%%%%%%%%%%%%%%%%%%%%%%%%%%%%%%%%%%%%%%%%%%
%%%%%%%%%%%%%%%%%%%%%%%%%%%%%%%%%%%%%%%%%%%%%%%%%%%%%%%%%%%%%%%%%%%%%%%

\subsection{$\p R$-warped products}

Let us begin with the following result.
\begin{proposition}
\label{non_exist_th} If a $\p R$-submanifold $M$ is  a warped product $\Na\times_f\Ni$ of an anti-invariant submanifold
$\Na$ and an invariant submanifold $\Ni$ with  warping function $f:\Na\longrightarrow{\mathbb{R}}_+$,
then $M$ is a $\p$R product $\Na\times N_\top^f$, where $N_\top^f$ is the manifold $\Ni$ endowed with the homothetic
metric $g_\top^f=f^2g_\top$.
\end{proposition}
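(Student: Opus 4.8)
The plan is to show that the warping function $f$ must be constant along $\Na$, which forces the warped product $\Na\times_f\Ni$ to degenerate into a direct product $\Na\times N_\top^f$. The starting point is the warped product connection formulas \eqref{warped_conn_eq}: with $B=\Na$ (so $\Na$ is the base, $\Ni$ the fiber), we have $\nabla_ZX = Z(\ln f)\,X$ for $Z$ tangent to $\Na$ and $X$ tangent to $\Ni$. First I would combine this with the fact that $\Ni$ is an \emph{invariant} submanifold: for $X\in\Ni$ we have $PX=\p X$ tangent to $M$ and $FX=0$. The key is to differentiate this relation. Using the para-K\"ahler condition $\widetilde\nabla\p=0$, the Gauss formula, and the fact that $\nabla_Z(PX) = Z(\ln f)\,PX$ (since $PX$ is again tangent to $\Ni$), one gets that $(\nabla_ZP)X = \nabla_Z(PX) - P\nabla_ZX = Z(\ln f)PX - Z(\ln f)PX = 0$; but the general formula for $(\nabla_UP)V$ in a para-K\"ahler ambient (see the computation behind Proposition on $\p R$-products, with $\widetilde\nabla\p=0$) reads $(\nabla_ZP)X = A_{FX}Z + t\,\sigma(Z,X)$, and here $FX=0$, so we are led to $t\,\sigma(Z,X)=0$, i.e. $\p\sigma(Z,X)\perp TM$; equivalently $\sigma(Z,X)$ has no $\p\Dp$-component, $\widetilde g(\sigma(Z,X),\p W)=0$ for all $W\in\Dp=T\Na$.

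Next I would bring in Lemma~\ref{lemma_3.3}(a), which says $\widetilde g(A_{FZ}U,PX) = g(\nabla_UZ,X)$ for $Z\in\Dp$, $X\in\D$, $U\in T(M)$. Taking $U=W\in\Dp$ and using $A_{FZ}W = A_{FW}Z$ (Lemma~\ref{lemma_3.3}(b)), the left side is $\widetilde g(\sigma(W,Z),\p X)$ up to the usual $\widetilde g(A_\xi U,V)=\widetilde g(\sigma(U,V),\xi)$ identity — wait, more directly: the left-hand side equals $\widetilde g(\sigma(W,PX),FZ) = \widetilde g(\sigma(W,PX),\p Z)$, which by the previous paragraph (with the roles of $X$ and $PX$, both in $\D$) vanishes. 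Hence $g(\nabla_WZ,X)=0$ for all $W,Z\in\Dp$ and $X\in\D$, which says $\nabla_WZ$ has no component in $\D$; i.e. $\Dp=T\Na$ is totally geodesic in $M$. On the other hand, from the warped product structure the \emph{fiber} $\Ni$ is totally umbilical with mean curvature vector $-\nabla(\ln f)$, and the \emph{base} $\Na$ is totally geodesic — so this is automatically consistent, and I instead need to extract the vanishing of the gradient. For that I compute $g(\nabla_Z X, X')$ for $X,X'\in\D$ directly from \eqref{warped_conn_eq}: $\nabla_ZX = Z(\ln f)X$, so the second fundamental form of $\Ni$ in $M$ in the mixed directions is governed by $Z(\ln f)$. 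Pairing the relation $t\sigma(Z,X)=0$ against the corresponding shape operator and using Lemma~\ref{lemma_3.3}(a) once more with $U=X'\in\D$: $\widetilde g(A_{FZ}X',PX) = g(\nabla_{X'}Z,X) = g(\nabla_Z X',X) = Z(\ln f)\,g(X',X)$; but $\widetilde g(A_{FZ}X',PX) = \widetilde g(\sigma(X',PX),FZ) = \widetilde g(\sigma(X',PX),\p Z) = 0$ by the first paragraph (as $X',PX\in\D$). Therefore $Z(\ln f)\,g(X',X)=0$ for all $X,X'\in\D$, $Z\in\Dp$; choosing $X=X'$ with $g(X,X)\ne 0$ (possible since $\D$ is non-degenerate, Proposition on $\p R$-submanifolds, item (ii)) gives $Z(\ln f)=0$ for all $Z\in\Dp=T\Na$.

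Finally, $Z(\ln f)=0$ for all $Z$ tangent to $\Na$ means $\ln f$, hence $f$, is constant on (each connected component of) $\Na$; write $f\equiv c>0$. Then the warped metric $g = g_\perp + f^2 g_\top = g_\perp + c^2 g_\top$ is simply the product metric of $\Na$ with $(\Ni, c^2 g_\top) = N_\top^f$, so $M = \Na\times_f\Ni$ is isometric to the Riemannian product $\Na\times N_\top^f$. By the characterization of $\p R$-products (Proposition, $P$ parallel) — or just by inspection, since a metric product of an anti-invariant and an invariant submanifold is a $\p R$-product — this exhibits $M$ as the $\p R$-product $\Na\times N_\top^f$, completing the proof. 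The main obstacle I anticipate is getting the sign and placement of the shape-operator identities exactly right (which Lemma~\ref{lemma_3.3}(a) pairs with which tangent component) so that the $\D$-component of $\nabla_ZX$ is cleanly isolated; once $Z(\ln f)=0$ is in hand the rest is immediate.
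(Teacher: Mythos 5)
There is a genuine gap, and it sits exactly at the step where the conclusion has to be extracted. At the end of your second paragraph you assert $\widetilde g\(\sigma(X',PX),\p Z\)=0$ ``by the first paragraph, as $X',PX\in\D$''. But the first paragraph (even granting it) only concerns the \emph{mixed} part of the second fundamental form: it gives $\widetilde g\(\sigma(Z,X),\p W\)=0$ for $Z,W\in\Dp$, $X\in\D$, and says nothing about $\sigma(\D,\D)$. In fact your own chain of identities shows $\widetilde g\(\sigma(X',PX),FZ\)=\widetilde g\(A_{FZ}X',PX\)=g(\nabla_{X'}Z,X)=Z(\ln f)\,g(X',X)$, so the vanishing you invoke is literally equivalent to the statement $Z(\ln f)=0$ that you are trying to prove: the argument is circular at its decisive point. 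The way to close it (and what the paper does, computing $\widetilde g\(\sigma(X,Y),FZ\)=Z(\ln f)\,g(X,PY)$ directly from $\widetilde\nabla\p=0$ and \eqref{warped_conn_eq}) is to exploit that the left-hand side is symmetric in the two $\D$-arguments while $g(\cdot\,,P\,\cdot)$ is skew-symmetric, which forces $Z(\ln f)\,g(X,PY)=0$ for all $X,Y\in\D$; non-degeneracy of $\D$ (take $Y=PX$ with $g(X,X)\neq0$) then yields $Z(\ln f)=0$. Your Lemma~\ref{lemma_3.3}(a) identity can be finished in the same spirit: apply it once with $(U,X)=(X',X)$ and once with $(U,X)=(PX,PX')$, and compare via the self-adjointness of $A_{FZ}$, using $P^2=I$ and $g(PX,PX')=-g(X,X')$.

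A secondary, repairable, flaw is in the first paragraph: you claim $\nabla_Z(PX)=Z(\ln f)\,PX$ ``since $PX$ is again tangent to $\Ni$''. The warped-product formulas \eqref{warped_conn_eq} hold for lifts of vector fields on the fibre; $PX$ is vertical but need not be a lift, because $P|_{\D}$ may vary with the point of $\Na$, so $\nabla_Z(PX)$ acquires an extra vertical term and $(\nabla_ZP)X=0$ is not immediate --- indeed parallelism of $P$ is essentially the characterization of $\p R$-products, i.e.\ close to what is being proved. The conclusion $t\sigma(Z,X)=0$ does happen to hold (it follows from Lemma~\ref{lemma_3.3}(a) with $U=Z\in\Dp$, since the base $\Na$ is totally geodesic in the warped product), but it is not the fact you need: the decisive quantity is the $\p\Dp$-component of $\sigma(\D,\D)$, which is precisely where the warping function enters. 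Your final reduction (constant $f$ implies the product structure $\Na\times N_\top^f$) is fine once $Z(\ln f)=0$ is actually established.
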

\proof
Consider $X,Y\in\D$ and $Z\in\Dp$. Compute
$$
\begin{array}{l}
\widetilde g(\sigma(X,Y),FZ)=\widetilde g(\widetilde\nabla_XY,\p Z)=-\widetilde g(Y,\p\widetilde\nabla_XZ)= g(PY,\nabla_XZ)=\\
\qquad = g(PY,Z(\ln f)~X)=Z(\ln f)~g(X,PY).
\end{array}
$$
Since $\sigma(\cdot ~,\, \cdot)$ is symmetric and $g(\cdot~,\, P\, \cdot)$ is skew-symmetric, it follows that $Z(\ln f)$ vanishes for all $Z$ tangent to $\Na$.
Consequently, $f$ is a constant and thus the warped product is nothing but the product $\Na\times N_\top^f$.
\endproof

The previous result shows that there do not exist warped product $\p R$-submanifolds  in para-K\"aehler manifolds of the form
$\Na\times_f\Ni$, other than $\p R$-products. Thus, in view of Proposition \ref{PR} we
give the following definition:

\begin{definition} {\rm A $\p R$-submanifold of a para-K\"ahler manifold $\widetilde M$ is called a \emph{$\p R$-warped product} if it is a warped product of the form: $\Ni \times_f \Na$,
where $\Ni$ in an invariant submanifold, $\Na$ is an anti-invariant submanifold of $\widetilde M$ and $f$ is a non-constant  function $f:N_\top\to {\mathbb R}_+$.}
\end{definition}

Since the metric on $N_{T}$ of a $\p R$-warped product $\Ni\times_f N_\perp$ is neutral, we simply called  the $\p R$-warped product $\Ni\times_f N_\perp$ {\it space-like} or {\it time-like} depending on $N_\perp$ is space-like
or time-like, respectively.

The next result characterizes $\p R$-warped products in para-K\"ahler manifolds.

\begin{proposition}
Let $M$ be a proper $\p R$-submanifold of a para-K\"ahler manifold. Then $M$ is a $\p R$-warped product if and only if
\begin{equation}
\label{warped_prod_eq}
A_{FZ}X=(PX(\mu))\,Z\ ,\ \forall \ X\in\D, \ Z\in\Dp,
\end{equation}
for some smooth function $\mu$ on $M$ satisfying $W(\mu)=0$, $\forall~W\in\Dp$.
\end{proposition}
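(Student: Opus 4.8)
The strategy is the classical Chen-type argument used to characterize $CR$-warped products, adapted to the para-Kähler setting via Lemma \ref{lemma_3.3} and Proposition \ref{non_exist_th}. For the \emph{necessity} direction, assume $M = \Ni \times_f \Na$ is a $\p R$-warped product. Set $\mu = \ln f$, viewed as a function on $M$; since $f$ is a function on $\Ni$, we immediately get $W(\mu) = 0$ for all $W \in \Dp$, which gives the side condition. Now apply part (a) of Lemma \ref{lemma_3.3}: for $X \in \D$, $Z \in \Dp$, $U \in T(M)$ we have $\widetilde g(A_{FZ}U, PX) = g(\nabla_U Z, X)$. Restricting to $U = X' \in \D$ and using the warped-product connection formula \eqref{warped_conn_eq}, namely $\nabla_{X'} Z = X'(\ln f)\, Z = X'(\mu)\, Z$, together with $Z \perp X$, shows $\nabla_{X'} Z$ has no $\D$-component, so $\widetilde g(A_{FZ}X', PX) = 0$; since $P$ is an automorphism of $\D$, this forces the $\D$-component of $A_{FZ}X'$ to vanish. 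Hence $A_{FZ}X \in \Dp$ for every $X \in \D$. To pin down the $\Dp$-component, take $U = W \in \Dp$ in Lemma \ref{lemma_3.3}(a): $\widetilde g(A_{FZ}W, PX) = g(\nabla_W Z, X)$, and by \eqref{warped_conn_eq} again $\nabla_W Z = \nabla^{\Na}_W Z - g(W,Z)\,\nabla(\ln f)$; pairing with $X \in \D$ kills $\nabla^{\Na}_W Z$ and leaves $-g(W,Z)\, X(\mu)$. Combining with $A_{FZ}W = A_{FW}Z$ (Lemma \ref{lemma_3.3}(b)) and unravelling, one obtains $\widetilde g(A_{FZ}X, W) = \widetilde g(A_{FX}?,\dots)$ — more directly, compute $\widetilde g(A_{FZ}X, W)$ for $X \in \D$, $Z,W \in \Dp$ via $\widetilde g(A_{FZ}X, W) = \widetilde g(\sigma(X,W), FZ)$ and expand $\sigma(X,W) = \widetilde\nabla_X W - \nabla_X W$, using $\widetilde\nabla$ parallelism of $\p$ and the connection formula $\nabla_X W = X(\mu) W$, to get $\widetilde g(\sigma(X,W),\p Z) = X(\mu)\, g(W,Z) = \widetilde g\big((PX(\mu))\,Z, W\big)$ after relabelling $P$ versus the identity appropriately. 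Assembling the two components yields \eqref{warped_prod_eq} with the function $\mu$ (up to checking, via the same skew-symmetry trick as in Proposition \ref{non_exist_th}, that it is $PX(\mu)$ and not $X(\mu)$ that appears — this is exactly the point where the invariant distribution's $\p$-action enters).

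For the \emph{sufficiency} direction, assume \eqref{warped_prod_eq} holds for some $\mu$ with $\Dp(\mu)=0$. The plan is to verify the hypotheses of Hiepko's theorem \cite{cm:Hie79}: show that $\D$ is integrable with totally geodesic leaves, and that $\Dp$ is integrable with spherical (totally umbilical with parallel mean curvature vector along $\D$) leaves, so that $M$ is locally a warped product $\Ni \times_f \Na$ with $\Na$ an integral manifold of $\Dp$. First, integrability and total geodesy of $\D$ in $M$: using \eqref{warped_prod_eq}, Lemma \ref{lemma_3.3}(a) gives $g(\nabla_X Z, X') = \widetilde g(A_{FZ}X, PX') = (PX(\mu))\,g(Z,PX') = 0$ since $Z \in \Dp \perp \D$; this says $\nabla_X Z \in \Dp$, i.e. $\D$ is a totally geodesic foliation (one also checks the $\nu$-part of $\sigma(\D,\D)$ causes no obstruction, using Proposition 3.x(ii) on $\D$ being minimal, or directly that $\widetilde g(\sigma(X,X'),FZ) = \widetilde g(A_{FZ}X,X') = (PX(\mu))\,g(Z,X')=0$, which by Proposition 3.6(ii) is precisely total geodesy of $\D$). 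For $\Dp$: it is integrable by Proposition 3.2(i); to see its leaves are totally umbilical in $M$ with mean curvature $-\nabla\mu$ (restricted suitably), compute for $Z, W \in \Dp$ and $X \in \D$ the component $g(\nabla_Z W, X)$; via $g(\nabla_Z W, X) = -g(W, \nabla_Z X) = -g(W, \nabla_X Z)$ (using $[X,Z] \in \Dp$, which follows from $\D$ totally geodesic and the bracket lying in $TM$) and then Lemma \ref{lemma_3.3}(a) with \eqref{warped_prod_eq}: $g(W,\nabla_X Z) = \widetilde g(A_{FZ}W, PX)\big/(\text{pairing})$ — more cleanly, $g(\nabla_X Z, W) = -g(Z,\nabla_X W)$... the upshot is that the $\D$-valued second fundamental form of a $\Dp$-leaf is $h^{\Dp}(Z,W) = -g(Z,W)\,(\grad \mu)_{\D}$, i.e. umbilical, and that $\grad \mu$ is $\D$-valued (because $\Dp(\mu)=0$) and its restriction to a $\Dp$-leaf is parallel in the normal connection of the leaf (this uses $\Dp(\mu)=0$ again plus $\D$ totally geodesic). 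Hiepko's theorem then produces the warped product structure $\Ni \times_f \Na$ with $f = e^{\mu}$, and since $\D$ is $\p$-invariant and $\Dp$ is anti-invariant this is a $\p R$-warped product, with $f$ non-constant because otherwise \eqref{warped_prod_eq} would force $A_{FZ}\D = 0$, reducing $M$ to a $\p R$-product and contradicting properness.

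The main obstacle is the bookkeeping in the necessity direction: correctly tracking when $X$ versus $PX$ appears (and hence why the warping-function derivative shows up as $PX(\mu)$), and confirming that $A_{FZ}X$ has \emph{no} $\nu$-component — this requires Lemma \ref{lemma_3.3}(a) in the form $\widetilde g(A_{FZ}U,\xi)$ for $\xi\in\nu$, which one handles by writing $\widetilde g(A_{FZ}U,\xi) = \widetilde g(\sigma(U,\xi\text{-related}),\dots)$; in practice one uses $\widetilde g(A_{FZ}X, t\xi) = -\widetilde g(A_{f\xi}X, Z) = \widetilde g(A_\xi PX, Z)$ from Lemma \ref{lemma_3.3}(b) and then the self-adjointness of shape operators together with part (a) to show this vanishes. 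On the sufficiency side, the delicate point is verifying the \emph{spherical} (not merely umbilical) condition on the $\Dp$-leaves needed for Hiepko — i.e. that the normal (in the leaf) part of $\nabla_Z(\grad\mu)$ vanishes for $Z\in\Dp$ — which rests essentially on differentiating the relation $\Dp(\mu)=0$ along $\Dp$ and using that $\D$ is totally geodesic so that $\nabla_Z(\grad\mu)$ has the expected components.
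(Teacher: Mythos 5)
Your plan is correct and follows essentially the route the paper itself indicates (the paper omits the proof, pointing to the K\"ahler/Sasakian analogues and to Hiepko's theorem): necessity by computing both tangential components of $A_{FZ}X$ from Lemma~\ref{lemma_3.3}(a) and the warped-product connection formulas, and sufficiency by deriving from \eqref{warped_prod_eq} that $\D$ is autoparallel and that the leaves of $\Dp$ are totally umbilical with mean curvature vector $\nabla\mu$ parallel in the leaf's normal bundle, then invoking Hiepko. Three small repairs, none of which affects the statement: with $\mu=\ln f$ the computation actually yields $A_{FZ}X=-\big(PX(\ln f)\big)Z$, so one should take $\mu=-\ln f$ (harmless, since the proposition only asks for \emph{some} $\mu$ with $\Dp(\mu)=0$); $A_{FZ}X$ is tangent to $M$, so there is no $\nu$-component to exclude; and the auxiliary claim $[X,Z]\in\Dp$ is neither true in general nor needed, since umbilicity of the $\Dp$-leaves follows directly from Lemma~\ref{lemma_3.3}(a) with $U\in\Dp$, the self-adjointness of $A_{FZ}$, and \eqref{warped_prod_eq} applied to $PX$ (likewise, non-constancy of the resulting warping function is not a consequence of properness but of $\mu$ being non-constant along $\D$, the degenerate case giving merely a $\p R$-product).
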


The proof of this result is similar as in the case of K\"ahler or Sasakian ambient space. The key  is the characterization of warped products given by Hiepko in \cite{cm:Hie79}.

%%%%%%%%%%%%%%%%%%%%%%%%%%%%%%%%%%%%%%%%%%%%%%%%%%%%%%%%%%%%%%%%%%%%%%%%%%%%%%%%%
%%%%%%%%%%%%%%%%%%%%%%%%%%%%%%%%%%%%%%%%%%%%%%%%%%%%%%%%%%%%%%%%%%%%%%%%%%%%%%%%%
\section{An optimal inequality}

\begin{theorem}
\label{ineq:th}
Let $M=\Ni\times_{f}\Na$ be a $\p R$-warped product in a para-K\"ahler manifold $\widetilde M$.
Suppose that $\Na$ is space-like and $\nabla^{\perp}\(\p\Na)\subseteq\p\Na$.
Then the second fundamental form of $M$ satisfies
\begin{equation}
\label{ineq:eq}
S_\sigma \leq 2p {||\nabla\ln f||}_2+{||\sigma_\nu^{\D}||}_2,
\end{equation}
where $p=\dim\Na$, $S_\sigma=\widetilde g(\sigma,\sigma)$, $\nabla\ln f$ is the gradient of $\ln f$ with respect to the metric $g$ 
and ${||\sigma_\nu^{\D}||}_2=\widetilde g\(\sigma_\nu(\D,\D),\sigma_\nu(\D,\D)\)$.
Here the index $\nu$ represents the $\nu$-component of that object.
\end{theorem}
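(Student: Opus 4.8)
The plan is to fix a frame adapted to the two orthogonal splittings $TM=\D\oplus\Dp$ and $T^{\perp}M=\p\Dp\oplus\nu$, and to evaluate $S_{\sigma}=\widetilde g(\sigma,\sigma)$ block by block. First I would take a $P$-adapted pseudo-orthonormal frame $\{e_{1},\dots,e_{h},Pe_{1},\dots,Pe_{h}\}$ of $\D$; since the metric on $\D$ is neutral and $\widetilde g(\p X,\p X)=-\widetilde g(X,X)$, one has $\widetilde g(Pe_{A},Pe_{A})=-\widetilde g(e_{A},e_{A})$, and I will write $\epsilon_{A}=\widetilde g(E_{A},E_{A})$ for the signs of this frame $\{E_{A}\}_{A=1}^{2h}$ of $\D$. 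I would also take a space-like orthonormal frame $\{\bar e_{1},\dots,\bar e_{p}\}$ of $\Dp$; then $\{\p\bar e_{1},\dots,\p\bar e_{p}\}$ is an orthonormal frame of $\p\Dp$ all of whose vectors have square norm $-1$, and I complete these with an orthonormal frame of $\nu$. Splitting $S_{\sigma}$ into the block of pairs in $\D\times\D$, the block of mixed pairs in $\D\times\Dp$ (which occurs twice), and the block of pairs in $\Dp\times\Dp$, the theorem reduces to evaluating these three contributions.

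The computation rests on two structural facts. \emph{Fact $(a)$:} Lemma~\ref{lemma_3.3}$(a)$ together with the warped product formulas \eqref{warped_conn_eq} yield $A_{FZ}X=-((PX)(\ln f))Z$ for $X\in\D$, $Z\in\Dp$ (the sign plays no role below, as it will be squared); consequently $\widetilde g(\sigma(\D,\D),F\Dp)=\widetilde g(A_{F\Dp}\D,\D)=0$, i.e. $\sigma(\D,\D)\subseteq\nu$, and the mixed component of $\sigma$ is $\sigma_{\p\Dp}(X,\bar e_{a})=((PX)(\ln f))\,\p\bar e_{a}$ for $X\in\D$. \emph{Fact $(b)$:} under the hypothesis $\nabla^{\perp}(\p\Na)\subseteq\p\Na$ one has $\sigma(TM,\Dp)\subseteq\p\Dp$. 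To see this, write $\sigma(X,W)=\p Z_{0}+\xi_{0}$ with $Z_{0}\in\Dp$ and $\xi_{0}=\sigma(X,W)_{\nu}\in\nu$, for $X$ tangent to $M$ and $W\in\Dp$; applying $\widetilde\nabla_{X}(\p W)=\p\widetilde\nabla_{X}W$ (using $\widetilde\nabla\p=0$) together with the Gauss and Weingarten formulas, the normal part gives $\nabla^{\perp}_{X}(\p W)=F\nabla_{X}W+\p\xi_{0}$, where $F\nabla_{X}W\in\p\Dp$ automatically (the $\D$-part of $\nabla_{X}W$ is annihilated by $F$ and its $\Dp$-part is sent into $\p\Dp$), while $\p\xi_{0}\in\p\nu=\nu$; since $\nu\cap\p\Dp=\{0\}$, the hypothesis forces $\p\xi_{0}=0$, hence $\xi_{0}=0$.

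With $(a)$ and $(b)$ the three blocks are evaluated directly. The $\D\times\D$ block equals $\widetilde g(\sigma_{\nu}(\D,\D),\sigma_{\nu}(\D,\D))={||\sigma_{\nu}^{\D}||}_{2}$ by $(a)$. For the mixed block, $(a)$ and $(b)$ give $\sigma(E_{A},\bar e_{a})=((PE_{A})(\ln f))\,\p\bar e_{a}$, so $\widetilde g(\sigma(E_{A},\bar e_{a}),\sigma(E_{A},\bar e_{a}))=-((PE_{A})(\ln f))^{2}$; summing over $a=1,\dots,p$ and over $\{E_{A}\}$ with the signs $\epsilon_{A}$, and using the identity $\sum_{A}\epsilon_{A}((PE_{A})(\ln f))^{2}=-\widetilde g(\nabla\ln f,\nabla\ln f)$ — obtained by expanding $\nabla\ln f\in\D$ in the frame $\{PE_{A}\}$, whose vectors have square norms $-\epsilon_{A}$ — the mixed block contributes exactly $2p\,{||\nabla\ln f||}_{2}$. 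Finally, by $(b)$ every $\sigma(\bar e_{a},\bar e_{b})$ lies in $\p\Dp$, and since $\Na$ is space-like every vector of $\p\Dp$ has negative square norm, so each $\widetilde g(\sigma(\bar e_{a},\bar e_{b}),\sigma(\bar e_{a},\bar e_{b}))\le 0$ and the $\Dp\times\Dp$ block is $\le 0$. Adding the three contributions gives $S_{\sigma}\le 2p\,{||\nabla\ln f||}_{2}+{||\sigma_{\nu}^{\D}||}_{2}$, with equality precisely when $\sigma(\Dp,\Dp)=0$.

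The main obstacle is the sign bookkeeping imposed by the neutral signature: one must keep track of the signs $\epsilon_{A}$ on the $\D$-frame, of the fact that $P$ reverses causal character (so $\{PE_{A}\}$ has square norms $-\epsilon_{A}$, which is what flips the sign in the gradient identity), and of the negative square norms on $\p\Dp$ coming from the space-like assumption on $\Na$. These reversals of sign are exactly what turns what in the Riemannian $CR$-warped product setting is a lower bound for $\|\sigma\|^{2}$ into the upper bound stated here. A secondary delicate point is Fact $(b)$: verifying that the hypothesis $\nabla^{\perp}(\p\Na)\subseteq\p\Na$ is precisely the condition that annihilates the $\nu$-part of $\sigma(\,\cdot\,,\Dp)$, thereby removing both the mixed $\nu$-terms and the $\nu$-part of $\sigma(\Dp,\Dp)$ from the estimate.
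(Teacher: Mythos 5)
Your proof is correct and follows essentially the same route as the paper: an adapted frame, the block decomposition of $S_\sigma$ into $\D\times\D$, mixed, and $\Dp\times\Dp$ parts, the warped-product/Lemma~\ref{lemma_3.3} computation giving $\sigma(\D,\D)\subseteq\nu$ and the mixed $\p\Dp$-component $(PX\ln f)\,\p\bar e_a$, the hypothesis $\nabla^{\perp}(\p\Na)\subseteq\p\Na$ killing the $\nu$-parts of $\sigma(\,\cdot\,,\Dp)$, and the negative-definiteness of $\p\Dp$ for space-like $\Na$ producing the upper bound. Your only deviations are cosmetic streamlinings (deriving $\sigma(TM,\Dp)\subseteq\p\Dp$ in one decomposition argument, and identifying the $\D\times\D$ block with ${||\sigma_\nu^{\D}||}_2$ directly instead of via the paper's explicit index computation), and your sign bookkeeping, including the gradient identity $\sum_A\epsilon_A((PE_A)\ln f)^2=-{||\nabla\ln f||}_2$, matches the paper's.
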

\proof
If we denote by $g{}_\top$ and $g{}_\perp$ the metrics on $\Ni$ and $\Na$, then the warped metric on $M$ is
$g=g_\top+f^2g_\perp$.
Let us consider
\begin{itemize}
\item on $\Ni$: an orthonormal basis $\{X_i,X_{i*}=PX_i\}$, $i=1,\ldots,h$, where\linebreak $h=\dim\Ni$;
     moreover, one can suppose that $\epsilon_i:=g(X_i,X_i)=1$ and hence $\epsilon_{i*}:=g(X_{i*},X_{i*})=-1$, for all $i$.
\item on $\Na$: an orthonormal basis $\{\tilde Z_a\}, a=1,\ldots,p$; we put $\epsilon_a:=g_\perp(\tilde Z_a,\tilde Z_a)=1$,
    for all $a$;
\item in each point $(x,y)\in M$: $Z_a(x,y)=\frac 1{f(x)}~\tilde Z_a(y)$;
\item in $\nu$: an orthonormal basis $\{\xi_\alpha,\xi_{\alpha*}=f\xi_{\alpha*}\}$, $\alpha=1,\ldots,q$;
      moreover, one can suppose that $\epsilon_\alpha:=\widetilde g(\xi_\alpha,\xi_\alpha)=1$ and hence
      $\epsilon_{\alpha*}:=\widetilde g(\xi_{\alpha*},\xi_{\alpha*})=-1$.
\end{itemize}

Now, we want to compute $\widetilde g(\sigma,\sigma)=$
$$=\widetilde g\(\sigma(\D,D),\sigma(\D,\D)\)+
                            2\widetilde g\(\sigma(\D,\Dp),\sigma(\D,\Dp)\)+
                            \widetilde g\(\sigma(\Dp,\Dp),\sigma(\Dp,\Dp)\),
$$
where
\begin{equation}
\label{sigmadd:eq}
\begin{array}{c}
\widetilde g\(\sigma(\D,\D),\sigma(\D,\D)\)=\sum\limits_{i,j=1}^h
    \Big(\epsilon_i\epsilon_j\widetilde g\(\sigma(X_i,X_j),\sigma(X_i,X_j)\)\\[2mm]
   + \epsilon_{i*}\epsilon_j\widetilde g\(\sigma(X_{i*},X_j),\sigma(X_{i*},X_j)\)
   + \epsilon_i\epsilon_{j*}\widetilde g\(\sigma(X_{i},X_{j*}),\sigma(X_{i},X_{j*})\)\\[2mm]
   + \epsilon_{i*}\epsilon_{j*}\widetilde g\(\sigma(X_{i*},X_{j*}),\sigma(X_{i*},X_{j*})\)
                        \Big),
\end{array}
\end{equation}

\begin{equation}
\label{sigmaddp:eq}
\begin{array}{c}
\widetilde g\(\sigma(\D,\Dp),\sigma(\D,\Dp)\)=\sum\limits_{i=1}^h\sum\limits_{a=1}^p\Big(
    \epsilon_i\epsilon_a\widetilde g\(\sigma(X_i,Z_a),\sigma(X_i,Z_a)\)\\[2mm]
  +  \epsilon_{i*}\epsilon_a\widetilde g\(\sigma(X_{i*},Z_a),\sigma(X_{i*},Z_a)\)\Big)
\end{array}
\end{equation}
and
\begin{equation}
\label{sigmadpdp:eq}
\widetilde g\(\sigma(\Dp,\Dp),\sigma(\Dp,\Dp)\)=\sum\limits_{a,b=1}^p\epsilon_a\epsilon_b\widetilde g\(\sigma(Z_a,Z_b),\sigma(Z_a,Z_b)\).
\end{equation}

To do so, first we analyze $\sigma(\D,\D)$.
Since $\D$ is totally geodesic, we have $\sigma(\D,\D)\in\nu$.
Hence one can write the following
$$
\begin{array}{rclcrcl}
\sigma(X_i,X_j)&=&\sigma_{ij}^\alpha\xi_\alpha+\sigma_{ij}^{\alpha*}\xi_{\alpha*} , &&
\sigma(X_{i*},X_j)&=&\sigma_{i*j}^\alpha\xi_\alpha+\sigma_{i*j}^{\alpha*}\xi_{\alpha*}, \\[2mm]
\sigma(X_{i*},X_{j*})&=&\sigma_{i*j*}^\alpha\xi_\alpha+\sigma_{i*j*}^{\alpha*}\xi_{\alpha*}, &&
\sigma(X_{i},X_{j*})&=&\sigma_{ij*}^\alpha\xi_\alpha+\sigma_{ij*}^{\alpha*}\xi_{\alpha*}.
\end{array}
$$
It follows that
\begin{equation}
\label{sdd:eq}
\begin{array}{c}
\widetilde g\(\sigma(\D,\D),\sigma(\D,\D)\) = \sum\limits_{i,j=1}^h\sum\limits_{\alpha=1}^q\Big\{
\big[(\sigma_{ij}^\alpha)^2-(\sigma_{ij}^{\alpha*})^2\big]-\big[(\sigma_{i*j}^\alpha)^2-(\sigma_{i*j}^{\alpha*})^2\big]\\[2mm]
 -\big[(\sigma_{ij*}^\alpha)^2-(\sigma_{ij*}^{\alpha*})^2\big]+\big[(\sigma_{i*j*}^\alpha)^2-(\sigma_{i*j*}^{\alpha*})^2\big]
\Big\}.
\end{array}
\end{equation}
Due to the integrability of $\D$ we deduce that $\sigma_{i*j}^\alpha=\sigma_{ij*}^\alpha$, $\sigma_{i*j}^{\alpha*}=\sigma_{ij*}^{\alpha*}$,
$\sigma_{i*j*}^\alpha=\sigma_{ij}^\alpha$, $\sigma_{i*j*}^{\alpha*}=\sigma_{ij}^{\alpha*}$.
Furthermore, using Lemma~\ref{lemma_3.3}, we may write
$$
    \widetilde g\(\sigma(X,Y),\xi\)=-\widetilde g\(\sigma(X,PY),f\xi\)\ ,\ \forall~X,Y\in\D, \ \xi\in\nu
$$
and consequently we have
$$
\begin{array}{l}
\sigma_{ij}^\alpha=\widetilde g\(\sigma(X_i,X_j),\xi_\alpha\)=
        -\widetilde g\(\sigma(X_i,X_{j*}),\xi_{\alpha*}\)=\sigma_{ij*}^{\alpha*}, \\[1mm]
\sigma_{ij}^{\alpha*}=-\widetilde g\(\sigma(X_i,X_j),\xi_{\alpha*}\)
        =\widetilde g\(\sigma(X_i,X_{j*}),\xi_{\alpha}\)=\sigma_{ij*}^{\alpha}.
\end{array}
$$
By replacing all these in \eqref{sdd:eq}, we obtain
\begin{equation}
\label{sdd0}
\widetilde g\(\sigma(\D,\D),\sigma(\D,\D)\)={||\sigma^\D_\nu||}_2=
    4\sum\limits_{i,j=1}^h\sum\limits_{\alpha=1}^q
    \big[(\sigma_{ij}^\alpha)^2-(\sigma_{ij}^{\alpha*})^2\big].
\end{equation}

Let us focus now on $\widetilde g\(\sigma(\D,\Dp),\sigma(\D,\Dp)\)$.
As before, we write
$$
\begin{array}{rcl}
\sigma(X_i,Z_a)&=&\sigma_{ia}^bFZ_b+\sigma_{ia}^\alpha\xi_\alpha+\sigma_{ia}^{\alpha*}\xi_{\alpha*} ,\\[2mm]
\sigma(X_{i*},Z_a)&=&\sigma_{i*a}^bFZ_b+\sigma_{i*a}^\alpha\xi_\alpha+\sigma_{i*a}^{\alpha*}\xi_{\alpha*}.
\end{array}
$$
It follows that
$$
\begin{array}{rcl}
\widetilde g\(\sigma(X_i,Z_a),\sigma(X_i,Z_a)\)&=&- \sum\limits_{b=1}^p(\sigma_{ia}^b)^2+
        \sum\limits_{\alpha=1}^q\big[(\sigma_{ia}^\alpha)^2-(\sigma_{ia}^{\alpha*})^2\big], \\[2mm]
\widetilde g\(\sigma(X_{i*},Z_a),\sigma(X_{i*},Z_a)\)&=&- \sum\limits_{b=1}^p(\sigma_{i*a}^b)^2+
        \sum\limits_{\alpha=1}^q\big[(\sigma_{i*a}^\alpha)^2-(\sigma_{i*a}^{\alpha*})^2\big].
\end{array}
$$
We obtain
\begin{equation}
\label{ddp_eq}
\begin{array}{l}
\widetilde g\(\sigma(\D,\Dp),\sigma(\D,\Dp)\)=-\sum\limits_{i=1}^h\sum\limits_{a,b=1}^p
                \big[(\sigma_{ia}^b)^2-(\sigma_{i*a}^b)^2\big]\qquad\qquad\\[2mm]
   \qquad \qquad  +\sum\limits_{i=1}^h\sum\limits_{a=1}^p\sum\limits_{\alpha=1}^q
                \big[(\sigma_{ia}^\alpha)^2-(\sigma_{ia}^{\alpha*})^2-(\sigma_{i*a}^\alpha)^2+(\sigma_{i*}^{\alpha*})^2\big].
\end{array}
\end{equation}
From Lemma~\ref{lemma_3.3} we have
$$
\widetilde g\(\sigma(PX,Z),f\xi\)=-\widetilde g\(\sigma(X,Z),\xi\)
$$
and consequently
\begin{equation}
\label{ddp_sigma:eq}
\begin{array}{l}
\sigma_{i*a}^{\alpha}=\widetilde g\(\sigma(X_{i*},Z_a),\xi_{\alpha}\)=-\widetilde g\(\sigma(X_i,Z_a),\xi_{\alpha*})=\sigma_{ia}^{\alpha*}, \\[2mm]
\sigma_{i*a}^{\alpha*}=-\widetilde g\(\sigma(X_{i*},Z_a),\xi_{\alpha*}\)=\widetilde g\(\sigma(X_i,Z_a),\xi_\alpha)=\sigma_{ia}^\alpha\ .
\end{array}
\end{equation}
Moreover we know that $\widetilde g\(\sigma(PX,Z),FW\)=-X(\ln f)g(Z,W)$. This yields
\begin{equation}
\label{ddp_logf:eq}
\sigma_{ia}^b=PX_i(\ln f)~\delta_{ab} \ {\rm and}\
\sigma_{i*a}^b=X_i(\ln f)~\delta_{ab}.
\end{equation}
By combining \eqref{ddp_eq}, \eqref{ddp_sigma:eq} and \eqref{ddp_logf:eq} we get
\begin{equation}
\begin{array}{c}
\widetilde g\(\sigma(\D,\Dp),\sigma(\D,\Dp)\)= p\sum\limits_{i=1}^h
                \big[ \big(X_i(\ln f)\big)^2-\big(PX_i(\ln f)\big)^2\big]\\[2mm]
                +2\sum\limits_{i=1}^h\sum\limits_{a=1}^p\sum\limits_{\alpha=1}^q
                \big[(\sigma_{ia}^\alpha)^2-(\sigma_{ia}^{\alpha*})^2\big].
\end{array}
\end{equation}
As $\widetilde g\(\sigma(X,Z),f\xi\)=-\widetilde g\(\nabla^\perp_XFZ,\xi\)$ and using the hypothesis
$\nabla_{\D}^\perp\p\Dp\subseteq\p\Dp$ we get $\sigma(\D,\Dp)\subseteq\p\Dp$.
Hence $\sigma_{ia}^\alpha$ and $\sigma_{ia}^{\alpha*}$ vanish. Thus
\begin{equation}
\label{sddp:eq}
\widetilde g\(\sigma(\D,\Dp),\sigma(\D,\Dp)\)= p~g\(\nabla\ln f,\nabla\ln f\).
\end{equation}

Finally, we study $\widetilde g\(\sigma(\Dp,\Dp),\sigma(\Dp,\Dp)\)$.
We write
$$
\sigma(Z_a,Z_b)=\sigma_{ab}^cFZ_c+\sigma_{ab}^\alpha\xi_\alpha+\sigma_{ab}^{\alpha*}\xi_{\alpha*}
$$
and hence
$$
\widetilde g\(\sigma(\Dp,\Dp),\sigma(\Dp,\Dp)\)=-\sum\limits_{a,b,c=1}^p(\sigma_{ab}^c)^2+
        \sum\limits_{a,b=1}^p\sum\limits_{\alpha=1}^q\big[ (\sigma_{ab}^\alpha)^2-(\sigma_{ab}^{\alpha*})^2\big].
$$

As $\widetilde g\(\sigma(Z,W),f\xi\)=-\widetilde g\(\nabla^\perp_ZFW,\xi\)$ and using the hypothesis
$\nabla_{\Dp}^\perp\p\Dp\subseteq\p\Dp$ we get $\sigma(\Dp,\Dp)\subseteq\p\Dp$.
Hence $\sigma_{ab}^\alpha$ and $\sigma_{ab}^{\alpha*}$ vanish. We conclude with
\begin{equation}
\label{dpdp:eq}
\widetilde g\(\sigma(\Dp,\Dp),\sigma(\Dp,\Dp)\)=-\sum\limits_{a,b,c=1}^p(\sigma_{ab}^c)^2\ .
\end{equation}
From these we obtain the theorem.
\endproof

\begin{remark}
If  the manifold $\Na$ in Theorem~{\rm\ref{ineq:th}} is time-like, then \eqref{ineq:eq} shall be replaced by
\begin{equation}
\label{iineq:eq}
S_\sigma \geq 2p {||\nabla\ln f||}_2+{||\sigma_\nu^{\D}||}_2.
\end{equation}
\end{remark}

\begin{remark} {\rm For every $\p R$-warped product $\Ni\times\Na$ in a para-K\"ahler manifold $\widetilde M$,  $\dim \widetilde M\geq \dim \Ni+2\dim \Na$ holds. Thus the smallest codimension is $\dim \Na$.}
\end{remark}

\begin{theorem}
\label{small_codim}
Let $\Ni\times_{f}\Na$ be a $\p R$-warped product in a para-K\"ahler manifold $\widetilde M$.
If $\Na$ is space-like (respectively, time-like) and $\dim \widetilde M= \dim \Ni+2\dim \Na$,
then the second fundamental form of $M$ satisfies
\begin{equation}
\label{ineq_small:eq}
S_\sigma \leq 2p {||\nabla\ln f||}_2\;\; \hbox{ {\rm (respectively,} $S_\sigma \geq 2p {||\nabla\ln f||}_2)$}.
\end{equation}

If the equality sign of \eqref{ineq_small:eq} holds identically,  we have
\begin{equation}
\label{eq} \sigma(\mathcal D,\mathcal D)= \sigma(\mathcal D^\perp,\Dp)=\{0\}.\end{equation}
\end{theorem}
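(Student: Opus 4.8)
The plan is to deduce the result from the computations already carried out in the proof of Theorem~\ref{ineq:th}, after first using the dimension hypothesis to simplify the setting. The key observation is that $\dim\widetilde M=\dim\Ni+2p$, with $p=\dim\Na$, forces the invariant part $\nu$ of the normal bundle to be trivial. Indeed,
$$
\dim T^\perp(M)=\dim\widetilde M-\dim M=(\dim\Ni+2p)-(\dim\Ni+p)=p=\dim\p\Dp ,
$$
and since $\p\Dp\subseteq T^\perp(M)$ we conclude $T^\perp(M)=\p\Dp$, hence $\nu=\{0\}$. Two things follow at once: first, $\sigma_\nu^{\D}=0$, so ${||\sigma_\nu^{\D}||}_2=0$; and second, since the whole normal bundle coincides with $\p\Dp$, the normal-connection conditions $\nabla^\perp_{\D}\p\Dp\subseteq\p\Dp$ and $\nabla^\perp_{\Dp}\p\Dp\subseteq\p\Dp$ appearing in the proof of Theorem~\ref{ineq:th} are automatically satisfied. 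Consequently, Theorem~\ref{ineq:th} (when $\Na$ is space-like) and the Remark following it (when $\Na$ is time-like) apply directly and yield \eqref{ineq_small:eq}.

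For the equality case I would retrace, now with $\nu=\{0\}$, the three partial sums into which $S_\sigma=\widetilde g(\sigma,\sigma)$ is decomposed in the proof of Theorem~\ref{ineq:th}. Since $\Ni$ is totally geodesic in $M$ (the first factor of a warped product is always totally geodesic), the distribution $\D$ is totally geodesic, so $\sigma(\D,\D)\in\nu=\{0\}$; thus $\sigma(\D,\D)=0$ and \eqref{sdd0} reduces to $\widetilde g(\sigma(\D,\D),\sigma(\D,\D))=0$. Equation \eqref{sddp:eq} gives $2\,\widetilde g(\sigma(\D,\Dp),\sigma(\D,\Dp))=2p\,{||\nabla\ln f||}_2$, and \eqref{dpdp:eq} gives $\widetilde g(\sigma(\Dp,\Dp),\sigma(\Dp,\Dp))=-\sum_{a,b,c=1}^p(\sigma_{ab}^c)^2$ when $\Na$ is space-like (the only change when $\Na$ is time-like being that $\widetilde g(FZ_c,FZ_c)$ becomes $+1$, so this last sum contributes with a $+$ sign). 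Adding the three contributions gives
$$
S_\sigma=2p\,{||\nabla\ln f||}_2-\sum_{a,b,c=1}^p(\sigma_{ab}^c)^2
$$
in the space-like case (and $S_\sigma=2p\,{||\nabla\ln f||}_2+\sum_{a,b,c=1}^p(\sigma_{ab}^c)^2$ in the time-like case). As $\sum_{a,b,c}(\sigma_{ab}^c)^2\ge 0$, this re-establishes \eqref{ineq_small:eq} and shows that equality holds identically precisely when $\sigma_{ab}^c=0$ for all $a,b,c$. Since $\nu=\{0\}$ forces $\sigma(Z_a,Z_b)=\sum_c\sigma_{ab}^cFZ_c$, the vanishing of all $\sigma_{ab}^c$ is equivalent to $\sigma(\Dp,\Dp)=\{0\}$; combined with $\sigma(\D,\D)=\{0\}$, which holds unconditionally here, this is exactly \eqref{eq}.

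I do not anticipate a genuine difficulty in this argument, as it is mostly bookkeeping resting on Theorem~\ref{ineq:th}. The single point to handle with care is the reduction step: one must check that $\nu=\{0\}$ both annihilates the term ${||\sigma_\nu^{\D}||}_2$ and makes the auxiliary hypothesis $\nabla^\perp(\p\Na)\subseteq\p\Na$ of Theorem~\ref{ineq:th} vacuous, so that \eqref{ineq_small:eq} follows with no extra assumption. Once that is in place, the equality analysis amounts to isolating the manifestly non-negative term $\sum_{a,b,c}(\sigma_{ab}^c)^2$ in \eqref{dpdp:eq} and observing it is the only obstruction to tightness; the unconditional vanishing $\sigma(\D,\D)=\{0\}$ then completes \eqref{eq}. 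For the time-like statement one merely re-checks the signs in \eqref{sigmadpdp:eq}--\eqref{dpdp:eq}, which is routine and changes nothing essential.
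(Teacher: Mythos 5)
Your proposal is correct and follows essentially the same route as the paper: the paper's proof simply invokes Theorem~\ref{ineq:th} and its proof, and your argument fills in exactly the intended details — the codimension hypothesis forces $T^\perp(M)=\p\Dp$, so $\nu=\{0\}$, which both kills ${||\sigma_\nu^{\D}||}_2$ and makes the hypothesis $\nabla^\perp(\p\Na)\subseteq\p\Na$ automatic, and then equality is obstructed only by the term $\sum_{a,b,c}(\sigma_{ab}^c)^2$, giving \eqref{eq}. The sign bookkeeping in the time-like case is also as you describe, so no gap remains.
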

\begin{proof} Inequality \eqref{ineq_small:eq} follows from \eqref{ineq:eq}. When the equality sign holds, \eqref{eq} follows from the proof of Theorem 4.1.
\end{proof}

%%%%%%%%%%%%%%%%%%%%%%%%%%%%%%%%%%%%%%%%%%%%%%%%%%%%%%%%%%%%%%%%%%%%%%%%%%%%%%%%%%%%%%%%%%%%%%%%
%%%%%%%%%%%%%%%%%%%%%%%%%%%%%%%%%%%%%%%%%%%%%%%%%%%%%%%%%%%%%%%%%%%%%%%%%%%%%%%%%%%%%%%%%%%%%%%%

\section{Exact solutions for a Special PDE's System}

We need the exact solutions of the following PDE system for later use.

\begin{proposition}
\label{PDEsyst}
The non-constant solutions $\psi=\psi(s_1,\ldots,s_h,t_1,\ldots,t_h)$ of the following system of
partial differential equations
\begin{subequations}
\renewcommand{\theequation}{\theparentequation .\alph{equation}}
\label{eq:pde}
\begin{align}
\label{eq:pde1} &
\frac{\partial^2 \psi}{\partial s_i\partial s_j}+\frac{\partial\psi}{\partial s_i}~\frac{\partial\psi}{\partial s_j}
                                        +\frac{\partial\psi}{\partial t_i}~\frac{\partial\psi}{\partial t_j}=0\, ,\\
\label{eq:pde2} &
\frac{\partial^2 \psi}{\partial s_i\partial t_j}+\frac{\partial\psi}{\partial s_i}~\frac{\partial\psi}{\partial t_j}
                                        +\frac{\partial\psi}{\partial t_i}~\frac{\partial\psi}{\partial s_j}=0
                                        \ ,\;\;  i,j=1,\ldots,h\, ,\\
\label{eq:pde3} &
\frac{\partial^2 \psi}{\partial t_i\partial t_j}+\frac{\partial\psi}{\partial t_i}~\frac{\partial\psi}{\partial t_j}
                                        +\frac{\partial\psi}{\partial s_i}~\frac{\partial\psi}{\partial s_j}=0
\end{align}
\end{subequations}
are either given by
\begin{equation}
\label{eq:pde_sol1}
\psi=\frac 12\ln\left|\big[\big(\langle {\mathbf{v}},z \rangle+c_1\big)^2-
            \big(\langle \j {\mathbf{v}},z\rangle+c_2\big)^2\big]\right|,
\end{equation}
where $z=(s_1,s_2,\ldots,s_h,t_1,t_2,\ldots,t_h)$,
${\mathbf{v}}=(a_1,a_2,\ldots,a_h,0,b_2,\ldots,b_h)$ is a constant vector in ${\mathbb{R}}^{2h}$
with $a_1\neq0$, $c_1,c_2\in{\mathbb{R}}$ and $\j{\mathbf{v}}=(0,b_2,\ldots,b_h,a_1,a_2,\ldots,a_h)$;

or given by
\begin{equation}
\label{eq:pde_sol2}
 \psi=\frac 12\ln\left|\big(\langle{\mathbf{v_1}},z\rangle+c\big)\big(\langle{\mathbf{v_2}},z\rangle+d\big)\right|,
\end{equation}
where ${\mathbf{v_1}}=\big(0,a_2,\ldots,a_h,0,\epsilon a_2,\ldots,\epsilon a_h\big)$,
${\mathbf{v_2}}=\big(b_1,\ldots,b_h,-\epsilon b_1,\ldots,-\epsilon b_h\big)$ with $b_1\neq0$, $z$ is as above and $c,d\in{\mathbb{R}}$.

Here $\langle~\, ,~\rangle$ denotes the Euclidean scalar product in ${\mathbb{R}}^{2h}$.
\end{proposition}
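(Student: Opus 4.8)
The plan is to substitute $\psi = \frac12\ln|\varphi|$ and convert the quadratically nonlinear system \eqref{eq:pde} into a linear system for $\varphi$, then integrate that linear system. Writing $\psi = \frac12\ln|\varphi|$ we have $\partial_{s_i}\psi = \frac{\varphi_{s_i}}{2\varphi}$, and
\[
\partial^2_{s_i s_j}\psi = \frac{\varphi_{s_i s_j}}{2\varphi} - \frac{\varphi_{s_i}\varphi_{s_j}}{2\varphi^2},
\]
so the combination appearing in \eqref{eq:pde1} becomes $\frac{1}{2\varphi}\varphi_{s_i s_j} - \frac{1}{2\varphi^2}\varphi_{s_i}\varphi_{s_j} + \frac{1}{4\varphi^2}\varphi_{s_i}\varphi_{s_j} + \frac{1}{4\varphi^2}\varphi_{t_i}\varphi_{t_j}$. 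Multiplying by $2\varphi$ one sees the first-order terms do not cancel outright; instead I will try $\varphi = e^{2\psi}$ directly as a product/sum structure. The cleaner route is to notice that the three equations can be written uniformly as $\partial_A\partial_B\psi + \partial_A\psi\,\partial_B\psi + \partial_{A^*}\psi\,\partial_{B^*}\psi = 0$ where $A \mapsto A^*$ swaps $s_i \leftrightarrow t_i$. Introducing the two "light-cone" combinations $u = \psi$ with the pairing above suggests setting $p = e^{\psi}\cosh(\text{something})$; more systematically, define $\Phi_\pm$ by $\psi = \tfrac12\ln|\Phi_+\Phi_-|$ or $\psi = \tfrac12\ln|\Phi_+^2 - \Phi_-^2|$ — exactly the two forms in \eqref{eq:pde_sol1}–\eqref{eq:pde_sol2} — and verify the system linearizes to $\partial_A\partial_B\Phi_\pm = 0$ for all $A,B$, i.e. $\Phi_\pm$ is affine in $z$.

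The key steps, in order: (1) Perform the substitution and show that \eqref{eq:pde} is equivalent to the statement that the matrix $M_{AB} := \partial_A\partial_B\varphi$ (with $\varphi = e^{2\psi}$) has a specific rank-$\le 2$ null structure — concretely, that $\varphi$ satisfies a linear second-order system whose solution space is spanned by products of affine functions. (2) Solve the resulting linear system: since all second partials of the relevant building blocks vanish, $\varphi$ must be a quadratic polynomial in $z$ of the form $(\langle v_1,z\rangle + c_1)(\langle v_2,z\rangle+c_2)$ or $(\langle v,z\rangle+c_1)^2 - (\langle \j v, z\rangle + c_2)^2$; here the hyperbolic rather than Euclidean structure of the cross terms (the $\partial_{A^*}$ pairing) forces the constraint relating the $s$- and $t$-components of $v$, explaining why $\j v$ has the stated shifted form and why in \eqref{eq:pde_sol2} the two vectors carry the $\pm\epsilon$ pattern. (3) Translate the algebraic constraints on $v_1, v_2$ (respectively $v$) back from the requirement that the off-diagonal and diagonal equations \eqref{eq:pde1}–\eqref{eq:pde3} all hold; this pins down $\mathbf{v_1} = (0,a_2,\dots,a_h,0,\epsilon a_2,\dots,\epsilon a_h)$ and $\mathbf{v_2} = (b_1,\dots,b_h,-\epsilon b_1,\dots,-\epsilon b_h)$, and in the other branch $\mathbf v = (a_1,\dots,a_h,0,b_2,\dots,b_h)$ with $a_1 \ne 0$. (4) Check non-constancy forces $a_1 \ne 0$ (resp. $b_1 \ne 0$) and that no further solutions arise, e.g. by a dimension count on the affine solution space.

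I expect the main obstacle to be step (3): organizing the bilinear constraints so that the two solution families emerge cleanly and showing they are \emph{exhaustive}. The system \eqref{eq:pde} is overdetermined ($O(h^2)$ equations for one unknown function of $2h$ variables), so the hard direction is not exhibiting the solutions \eqref{eq:pde_sol1}–\eqref{eq:pde_sol2} (a direct substitution, which I would relegate to a routine verification) but proving every non-constant solution has one of these two forms. The natural way to do this is to fix indices and exploit the diagonal equations \eqref{eq:pde1}, \eqref{eq:pde3} with $i=j$ to get ODEs in a single variable forcing $\partial_{s_i}\psi$ and $\partial_{t_i}\psi$ to be rational with affine denominator $\varphi$, then use the off-diagonal equations \eqref{eq:pde2} to show all these share a \emph{common} denominator, and finally analyze the numerator structure — the Euclidean-versus-hyperbolic dichotomy in the cross terms is precisely what splits the analysis into the "difference of squares" case \eqref{eq:pde_sol1} and the "product" case \eqref{eq:pde_sol2}, according to whether the relevant quadratic form is nondegenerate or a perfect product.
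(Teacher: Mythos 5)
There is a genuine gap: you correctly single out exhaustiveness (every non-constant solution has one of the two stated forms) as the hard direction, but your proposal never actually carries it out, and the structural claims on which your plan rests are not correct as stated. First, the substitution $\varphi=e^{2\psi}$ does \emph{not} linearize the system: the transformed equations read $2\varphi\,\varphi_{s_is_j}=\varphi_{s_i}\varphi_{s_j}-\varphi_{t_i}\varphi_{t_j}$, $2\varphi\,\varphi_{s_it_j}=\varphi_{s_i}\varphi_{t_j}-\varphi_{t_i}\varphi_{s_j}$, $2\varphi\,\varphi_{t_it_j}=\varphi_{t_i}\varphi_{t_j}-\varphi_{s_i}\varphi_{s_j}$, which are still quadratic; your step (1), asserting an equivalent \emph{linear} second-order system ``whose solution space is spanned by products of affine functions,'' is exactly the conclusion to be proved, not a consequence of the substitution. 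The alternative you offer, namely plugging in $\psi=\tfrac12\ln|\Phi_+\Phi_-|$ or $\psi=\tfrac12\ln|\Phi_+^2-\Phi_-^2|$ with $\Phi_\pm$ affine and checking the equations, only establishes the easy verification direction. Second, your classification sketch starts from ``diagonal equations \eqref{eq:pde1}, \eqref{eq:pde3} with $i=j$ give ODEs in a single variable'': they do not, since $\psi_{s_is_i}+\psi_{s_i}^2+\psi_{t_i}^2=0$ couples $s_i$- and $t_i$-derivatives. The relation that actually opens the classification is \eqref{eq:pde2} with $i=j$, which says $\psi_{s_it_i}=-2\psi_{s_i}\psi_{t_i}$, i.e.\ $\partial^2_{s_it_i}\big(e^{2\psi}\big)=0$; this additive separation of $e^{2\psi}$ in $s_1,t_1$ is the step your proposal is missing (also, the denominator $\varphi$ in the true solutions is quadratic, not affine, in $z$).

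For comparison, the paper's argument proceeds from that separation: writing $e^{2\psi}=\eta(s_1,\ldots)+\zeta(t_1,\ldots)$, the diagonal equations \eqref{eq:pde1} and \eqref{eq:pde3} for $i=j=1$ force $\eta_{s_1s_1}+\zeta_{t_1t_1}=0$, hence $\eta=Fs_1^2+Gs_1+H$, $\zeta=-Ft_1^2-Kt_1-L$ with the compatibility condition $4F(H-L)=G^2-K^2$; the dichotomy $F\neq0$ versus $F=0$ (the latter splitting further into two subcases) is then what produces, after exploiting the remaining equations with indices $i,j\geq2$ to pin down the dependence on the other variables, precisely the two families \eqref{eq:pde_sol1} and \eqref{eq:pde_sol2}. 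Your intuition that the split corresponds to whether the quadratic $e^{2\psi}$ is a genuine difference of squares or a product of two affine factors matches this dichotomy, but as it stands it is an expectation, not a proof; without the separation step, the quadratic-polynomial structure of $e^{2\psi}$, and the explicit case analysis, the exhaustiveness claim remains unestablished.
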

\proof
Let us make some notations: $\psi_{s_i}:=\frac{\partial \psi}{\partial s_i} $; $\psi_{s_is_j}:=\frac{\partial^2\psi}{\partial s_i\partial s_j}$,
and similar for $\psi_{t_i}$, $\psi_{s_it_j}$, respectively $\psi_{t_it_j}$. The same notations for any other function.

If in \eqref{eq:pde2} we take $i=j$ one gets $\psi_{s_it_i}=-2\psi_{s_i}\psi_{t_i}$ for all $i=1,\ldots,h$.
Since $\psi$ is non-constant, there exists $i_0$ such that at least one of $\psi_{s_{i_0}}$ or $\psi_{t_{i_0}}$
is different from $0$. Without loss of the generality we suppose $i_0=1$. Both situations yield
$$
e^{2\psi}=\zeta(t_1,s_2,t_2,\ldots,s_h,t_h)+\eta(s_1,s_2,t_2,\ldots,s_h,t_h)\, ,
$$
where $\zeta$ and $\eta$ are functions of $2h-1$ variables such that $\zeta+\eta>0$ on the domain of $\psi$.
It follows that

\begin{equation}
\label{eq:8}
\begin{array}{l}
\psi_{s_1}=\dfrac{\eta_{s_1}}{2(\zeta+\eta)}\ ,\
\psi_{s_1s_1}=\dfrac{\eta_{s_1s_1}(\zeta+\eta)-\eta_{s_1}^2}{2(\zeta+\eta)^2}\ ,\\[2mm]
\psi_{t_1}=\dfrac{\zeta_{t_1}}{2(\zeta+\eta)}\ ,\
\psi_{t_1t_1}=\dfrac{\zeta_{t_1t_1}(\zeta+\eta)-\eta_{t_1}^2}{2(\zeta+\eta)^2} \ .
\end{array}
\end{equation}
Using \eqref{eq:pde1} and \eqref{eq:pde3} we obtain
\begin{equation}
%\label{eq:9_10}
2\eta_{s_1s_1}(\zeta+\eta)=\eta_{s_1}^2-\zeta_{t_1}^2\ ,\quad
2\zeta_{t_1t_1}(\zeta+\eta)=\zeta_{t_1}^2-\eta_{s_1}^2\ .
\end{equation}
Since $\zeta+\eta\neq0$, adding the previous relations, one gets
$$
\eta_{s_1s_1}(s_1,s_2,t_2,\ldots,s_h,t_h)+\zeta_{t_1t_1}(t_1,s_2,t_2,\ldots,s_h,t_h)=0
$$
and hence, there exists a function $F$ depending on $s_2,t_2,\ldots,s_h,t_h$ such that
$$
\begin{array}{l}
\eta_{s_1s_1}(s_1,s_2,t_2,\ldots,s_h,t_h)=~2F(s_2,t_2,\ldots,s_h,t_h)\, , \\[2mm]
\zeta_{t_1t_1}(t_1,s_2,t_2,\ldots,s_h,t_h)=-2F(s_2,t_2,\ldots,s_h,t_h)\ .
\end{array}
$$
At this point one integrates with respect to $s_1$ and $t_1$ respectively and one gets
\begin{equation}
\label{eq:11_12}
\begin{array}{l}
\eta(s_1,s_2,t_2,\ldots,s_h,t_h)=~Fs_1^2+Gs_1+H\, , \\[2mm]
\zeta(t_1,s_2,t_2,\ldots,s_h,t_h)=-Ft_1^2-Kt_1-L\, ,
\end{array}
\end{equation}
where $G,H,L$ and $K$ are functions depending on $s_2,t_2,\ldots,s_h,t_h$ satisfying the
following condition
\begin{equation}
\label{eq:13}
4F(H-L)=G^2-K^2.
\end{equation}
It follows that $\eta+\zeta=\(Fs_1^2+Gs_1+H\)-\(Ft_1^2+Kt_1+L\)$.

{\bf Case 1.}
Suppose $F\neq 0$; being continuous, it preserves constant sign; denote it by $\varepsilon$.
From \eqref{eq:13} we have $H-L=\frac{G^2-K^2}{4F}$ which combined with \eqref{eq:11_12} yields
$$
\eta+\zeta=\varepsilon\left[\Big(\varepsilon\sqrt{\varepsilon F}~s_1+\frac{G}{2\sqrt{\varepsilon F}}\Big)^2-
                    \Big(\varepsilon\sqrt{\varepsilon F}~t_1+\frac{K}{2\sqrt{\varepsilon F}}\Big)^2\right].
$$
We make some notations: $a=\varepsilon\sqrt{\varepsilon F}$, $\gamma=\frac G{2\sqrt{\varepsilon F}}$
and $\delta=\frac K{2\sqrt{\varepsilon F}}$, all of them being functions depending on
$s_2,t_2,\ldots,s_h,t_h$.
We are able to re-write the function $\psi$ as
\begin{equation}
\label{eq:14}
\psi=\frac12\ln\varepsilon\left[(as_1+\gamma)^2-(at_1+\delta)^2\right].
\end{equation}
We compute now
\begin{equation}
\label{eq:15}
\psi_{s_1}=\frac{a(as+\gamma)}{(as_1+\gamma)^2-(at_1+\delta)^2}\ ,\
\psi_{t_1}=\frac{-a(at_1+\delta)}{(as_1+\gamma)^2-(at_1+\delta)^2}
\end{equation}
and for $i\neq1$
\begin{equation}
\label{eq:16_17}
\begin{array}{l}
\displaystyle
\psi_{s_i}=\frac{(as_1+\gamma)(a_{s_i}s_1+\gamma_{s_i})-(at_1+\delta)(a_{s_i}t_1+\delta_{s_i})}{(as_1+\gamma)^2-(at_1+\delta)^2}\, ,  \\[3mm]
\displaystyle
\psi_{t_i}=\frac{(as_1+\gamma)(a_{t_i}s_1+\gamma_{t_i})-(at_1+\delta)(a_{t_i}t_1+\delta_{t_i})}{(as_1+\gamma)^2-(at_1+\delta)^2}\ .
\end{array}
\end{equation}
Computing also $\psi_{s_1s_i}$, we can use \eqref{eq:pde1} for $j=1$, $i=2,\ldots,h$ and obtain
$$
\begin{array}{l}
[a(a_{s_i}s_1+\gamma_{s_i})+a_{s_i}(as_1+\gamma)][(as_1+\gamma)^2-(at_1+\delta)^2]\\
\qquad-a(as_1+\gamma)[(as_1+\gamma)(a_{s_i}s_1+\gamma_{s_i})-(at_1+\delta)(a_{s_i}t_1+\delta_{s_i})]\\
\qquad-a(at_1+\delta)[(as_1+\gamma)(a_{t_i}s_1+\gamma_{t_i})-(at_1+\delta)(a_{t_i}t_1+\delta_{t_i})]=0.
\end{array}
$$
This represents a polynomial in $s_1$ and $t_1$, identically zero, and hence, all its coefficients must vanish.
Analyzing the coefficients for $s_1^3$ and $t_1^3$ we obtain $a_{s_i}=0$ and $a_{t_i}=0$ for all $i=2,\ldots,h$.
Consequently $a$ is a real constant.

Replacing in the previous equation we get
$$
\delta_{s_i}(as_1+\gamma)-\gamma_{s_i}(at_1+\delta)-\gamma_{t_i}(as_1+\gamma)+\delta_{t_i}(at_1+\delta)=0.
$$
Looking at the coefficients of $s_1$ and $t_1$ we have
\begin{equation}
\label{eq:19}
\delta_{s_i}=\gamma_{t_i}\ {\rm and\ }\delta_{t_i}=\gamma_{s_i},\ \forall i=2,\ldots,h.
\end{equation}
Therefore \eqref{eq:16_17} gives
\begin{equation}
\label{eq:20}
\psi_{s_i}=\frac{\gamma_{s_i}(as_1+\gamma)-\delta_{s_i}(at_1+\delta)}{(as_1+\gamma)^2-(at_1+\delta)^2}\ ,\
\psi_{t_i}=\frac{\gamma_{t_i}(as_1+\gamma)-\delta_{t_i}(at_1+\delta)}{(as_1+\gamma)^2-(at_1+\delta)^2}\ .
\end{equation}
We may compute
\begin{equation}
\begin{array}{l}
\displaystyle
\psi_{s_it_j}=\frac{\gamma_{s_it_j}(as_1+\gamma)+\gamma_{s_i}\gamma_{t_j}-\delta_{s_it_j}(at_1+\delta)-\delta_{s_i}\delta_{t_j}}
        {(as_1+\gamma)^2-(at_1+\delta)^2}\qquad\\[3mm]
        \qquad \displaystyle       -2~\frac{[\gamma_{t_j}(as_1+\gamma)-\delta_{t_j}(at_1+\delta)][\gamma_{s_i}(as_1+\gamma)-\delta_{s_i}(at_1+\delta)]}
        {[{(as_1+\gamma)^2-(at_1+\delta)^2}]^2}
\end{array}
\end{equation}
and using \eqref{eq:pde2} with $i,j>1$, we obtain again a polynomial in $s_1$ and $t_1$, identically zero.
By comparing the coefficients of $s_1^3$ and $t_1^3$ we find $\gamma_{s_it_j}=0$ and $\delta_{s_it_j}=0$, for all
$i,j=2,\ldots,h$. It follows that $\gamma_{s_i}$ depend only on $s_2,\ldots,s_h$ and $\delta_{t_i}$ depend only on
$t_2,\ldots,t_h$, for all $i$. From \eqref{eq:19} we know $\gamma_{s_i}=\delta_{t_i}$. Hence, there exist constants
$a_i\in{\mathbb{R}}$ such that $\gamma_{s_i}=\delta_{t_i}=a_i$, $\forall i=2,\ldots,h$. In the same way, there exist
constants $b_i\in{\mathbb{R}}$ such that $\gamma_{t_i}=\delta_{s_i}=b_i$, $\forall i=2,\ldots,h$.
It follows that
\begin{equation}
\label{eq:22}
\begin{array}{l}
\gamma(s_2,t_2\ldots,s_h,t_h)=\sum\limits_{i=2}^ha_is_i+\sum\limits_{i=2}^hb_it_i+c_1, \\[2mm]
\delta(s_2,t_2\ldots,s_h,t_h)=\sum\limits_{i=2}^hb_is_i+\sum\limits_{i=2}^ha_it_i+c_2\ ,\quad c_1,c_2\in{\mathbb{R}}.
\end{array}
\end{equation}
We conclude with
$$
\begin{array}{l}
\psi=\frac12~\ln\varepsilon\big[(as_1+a_2s_2+b_2t_2+\ldots+a_hs_h+b_ht_h+c_1)^2\qquad\\[2mm]
   \qquad \qquad-(at_1+b_2s_2+a_2t_2+\ldots+b_hs_h+a_ht_h+c_2)^2\big].
\end{array}
$$
Hence the solution \eqref{eq:pde_sol1} is obtained with $a_1=a\neq0$.

{\bf Case 2.}
Let us come back to the case $F=0$ (on a certain open set).
From \eqref{eq:13} we immediately find
$\eta+\zeta=Gs_1-Kt_1+H$, where $G,H,K$ are functions depending on $(s_2,\ldots,s_h,t_2,\ldots,t_h)$, and $K=\epsilon G$, $\epsilon=\pm1$.
Thus
$$
\psi=\frac12\ln|(s_1-\epsilon t_1)G+H|.
$$
We have
$$
\psi_{s_1}=\frac G{2[(s_1-\epsilon t_1)G+H]}\ ,\ \psi_{t_1}=-\frac {\epsilon G}{2[(s_1-\epsilon t_1)G+H]},
$$
$$
\psi_{s_i}=\frac{(s_1-\epsilon t_1)G_{s_i}+H_{s_i}}{2[(s_1-\epsilon t_1)G+H]}\ ,\
\psi_{t_i}=\frac{(s_1-\epsilon t_1)G_{t_i}+H_{t_i}}{2[(s_1-\epsilon t_1)G+H]}\ ,\ i=2,\ldots,h,
$$
$$
\psi_{s_is_1}=\frac{G_{s_i}[(s_1-\epsilon t_1)G+H]-G[(s_1-\epsilon t_1)G_{s_i}+H_{s_i})}{2[(s_1-\epsilon t_1)G+H]^2}\ ,\ i=2,\ldots,h.
$$
By applying \eqref{eq:pde1} for $j=1$ and $i=2,\ldots,h$ we obtain
$$
2G_{s_i}[(s_1-\epsilon t_1)G+H]-G[(s_1-\epsilon t_1)G_{s_i}+H_{s_i}]-\epsilon G[(s_1-\epsilon t_1)G_{t_i}+H_{t_i}]=0.
$$
By comparing the coefficients of $s_1$ and $t_1$ we find
\begin{equation}
\label{eq_x}
G(G_{s_i}-\epsilon G_{t_i})=0,\quad 2G_{s_i}H-G(H_{s_i}+\epsilon H_{t_i})=0.
\end{equation}
Since $G\neq0$ we have $G_{t_i}=\epsilon G_{s_i}$. In the sequel, computing
$$
\psi_{s_is_j}=\frac{(s_1-\epsilon t_1)G_{s_is_j}+H_{s_is_j}}{2[(s_1-\epsilon t_1)G+H]}-
  \frac{[(s_1-\epsilon t_1)G_{s_i}+H_{s_i}][(s_1-\epsilon t_1)G_{s_j}+H_{s_j}]}{2[(s_1-\epsilon t_1)G+H]^2}
$$
for $i,j\geq2$, replacing in \eqref{eq:pde1} and comparing the coefficients of $s_1^2$
we find
\linebreak
$G_{s_is_j}=0$. It follows also $G_{s_it_j}=0$ and $G_{t_it_j}=0$. Hence
$$
G(s_2,t_2,\ldots,s_h,t_h)=\sum\limits_{i=2}^ha_i(s_i+\epsilon t_i)+c, \quad a_i,c\in{\mathbb{R}}.
$$
Moreover, $H$ should satisfy
\begin{equation}
\label{eq_y}
2GH_{s_is_j}-G_{s_i}(H_{s_j}-\epsilon H_{t_j})-G_{s_j}(H_{s_i}-\epsilon H_{t_i})=0\, ,
\end{equation}
\begin{equation}
\label{eq_z}
2HH_{s_is_j}-H_{s_i}H_{s_j}+H_{t_i}H_{t_j}=0.
\end{equation}

{\bf Case 2a.}
If $G$ is a non-zero constant $c$ (and this happens when all $a_i$ vanish),
then from the second equation in \eqref{eq_x} we find $H_{s_i}+\epsilon H_{t_i}=0$
for all $i\geq2$. Therefore, $H$ has the form
$$
H(s_2,t_2,\ldots,s_h,t_h)=Q(s_2-\epsilon t_2,\ldots,s_h-\epsilon t_h),
$$
where $Q$ is a function depending only on $h$ variables.
From \eqref{eq_y} we get $H_{s_is_j}=0$ and then $Q$ is an affine function.
Thus
$H=\sum\limits_{i=2}^hb_i(s_i-\epsilon t_i)+d$, with
$b_2,\ldots,b_h,d\in{\mathbb{R}}$. Consequently,
$$
\psi=\frac12\ln\left[\sum\limits_{i=1}^hb_i(s_i-\epsilon t_i)+d\right],\;\; b_1=c\neq0\, .
$$

{\bf Case 2b.}
If there exists at least one $a_i\neq0$, from the second equation in \eqref{eq_x}
we can express $H$ in the form $H=Q G$, where $Q$ is a function on $s_2,t_2,\ldots,s_h,t_h$.
Then, for every $i\geq2$,
$$
H_{s_i}+\epsilon H_{t_i}=2a_iQ+G(Q_{s_i}+Q_{t_i})\, ,
$$
which combined with \eqref{eq_x} gives
$Q_{s_i}+\epsilon Q_{t_i}=0$.
Thus, $Q=Q(s_2-\epsilon t_2,\ldots,s_h-\epsilon t_h)$.
Using \eqref{eq_y}, it follows that $Q$ is an affine function and hence
$H=\sum\limits_{i=2}^hb_i(s_i-\epsilon t_i)+d$, with
$b_2,\ldots,b_h,d\in{\mathbb{R}}$. Consequently,
$$\psi=\frac12\ln\Big\{\Big[\sum\limits_{i=1}^hb_i(s_i-\epsilon t_i)+d\Big]\Big[\sum\limits_{j=2}^ha_i(s_i+\epsilon t_i)+c\Big]\Big\}$$
with $b_1=1$. This completes the proof.
\endproof

%%%%%%%%%%%%%%%%%%%%%%%%%%%%%%%%%%%%%%%%%%%%%%%%%%%%%%%%%%%%%%%%%%%%%%%%%%%%%%%%%%%%%%%
%%%%%%%%%%%%%%%%%%%%%%%%%%%%%%%%%%%%%%%%%%%%%%%%%%%%%%%%%%%%%%%%%%%%%%%%%%%%%%%%%%%%%%%

\section{$\p R$-warped products in ${\p}^{h+p}$ satisfying $S_\sigma=2p{||\nabla\ln f||}_2$}

In the following, we will use letters $i,j,k$ for indices running from $1$ to $h$;  $a,b,c$ for indices from $1$ to $p$; and $A,B$ for indices between $1$ and $m$ with $m=h+p$.

On ${\mathbb{E}}^{2(h+p)}_{h+p}$ we consider the global coordinates $(x_i,x_{h+a},y_i,y_{h+a})$ and the canonical flat para-K\"ahler structure defined as above.

\begin{proposition}
\label{prop_v}
Let $M=\Ni\times_f\Na$ be a space-like $\p R$-warped product in the  para-K\"ahler $(h+p)$-plane $\p^{h+p}$ with $h=\frac{1}{2}\dim \Ni$ and $p=\dim \Na$. If $M$ satisfies the equality case of \eqref{ineq_small:eq} identically,  then
\begin{itemize}
\item $\Ni$ is a totally geodesic submanifold in $\p^{h+p}$, and hence it is congruent to an open part of $\p^h$;
\item $\Na$ is a totally umbilical submanifold in $\p^{h+p}$.
\end{itemize}
Moreover, if $\Na$ is a real space form of constant curvature $k$, then the warping function $f$ satisfies
${||\nabla f||}_2=k$.
\end{proposition}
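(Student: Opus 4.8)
The plan is to push the equality conclusion of Theorem~\ref{small_codim} through the warped-product connection formulas \eqref{warped_conn_eq} and the Gauss formula, so as to identify the second fundamental forms of the two factors $\Ni$ and $\Na$ as submanifolds of the flat ambient $\p^{h+p}$, and then to extract the curvature identity from the Gauss equation.

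\textbf{Step 1 (the invariant factor).} By \eqref{eq} we have $\sigma(\D,\D)=\{0\}$. Since $\Ni$ is totally geodesic in $M$ (a general property of warped products, recalled in Section~2), for $X,Y\in\D$ the $M$-tangential component of $\widetilde\nabla_XY$ is $\nabla_XY\in T\Ni$, so the second fundamental form of $\Ni$ in $\p^{h+p}$ is exactly $\sigma(X,Y)=0$. Hence $\Ni$ is totally geodesic in the flat space $\p^{h+p}$, so it is an open part of an affine $2h$-dimensional subspace $V$. Because $\Ni$ is an invariant submanifold, the direction space of $V$ is $\p$-stable; an invariant non-degenerate linear subspace of a para-K\"ahler vector space is neutral of signature $(h,h)$ and inherits a flat para-K\"ahler structure (its $\p$-invariant orthogonal complement absorbing the rest), so $V$ is congruent to $\p^h$ and thus $\Ni$ is congruent to an open part of $\p^h$.

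\textbf{Step 2 (the anti-invariant factor).} Fix $x_0\in\Ni$ and view the fiber $\{x_0\}\times\Na$ as a submanifold of $\p^{h+p}$ through $M$; by \eqref{formula2.5} its induced metric is $f^2(x_0)\,g_\perp$. For $Z,W\in\Dp$, \eqref{warped_conn_eq} gives $\nabla_ZW=\nabla^{\Na}_ZW-g(Z,W)\,\nabla\ln f$, where the first term is tangent to $\Na$ while $\nabla\ln f$ is tangent to $\Ni$, hence normal to the fiber. Combining with the Gauss formula and $\sigma(Z,W)=0$ (from \eqref{eq}), the second fundamental form of the fiber in $\p^{h+p}$ is $\sigma'(Z,W)=-g(Z,W)\,\nabla\ln f$, which is umbilical; thus $\Na$ is a totally umbilical submanifold of $\p^{h+p}$ with mean curvature vector $-\nabla\ln f$.

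\textbf{Step 3 (the curvature identity).} Assume now $(\Na,g_\perp)$ has constant curvature $k$. Applying the Gauss equation to the fiber inside the flat $\p^{h+p}$, with $\widetilde R=0$ and $\sigma'(Z,W)=-g(Z,W)\nabla\ln f$, gives
$$g(R_{ZW}Z',W')=\big(g(W,Z')\,g(Z,W')-g(Z,Z')\,g(W,W')\big)\,{||\nabla\ln f||}_2,$$
so the fiber, with its induced metric $f^2(x_0)g_\perp$, has constant sectional curvature ${||\nabla\ln f||}_2$ evaluated at $x_0$ (note $\nabla\ln f$ is a single fixed normal vector along the fiber). On the other hand, multiplying $g_\perp$ by the constant $f^2(x_0)$ divides sectional curvature by $f^2(x_0)$, so the fiber has constant curvature $k/f^2(x_0)$. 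Comparing and using $\nabla\ln f=\tfrac1f\nabla f$ yields $k=f^2(x_0)\,{||\nabla\ln f||}_2={||\nabla f||}_2$ at $x_0$; since $x_0\in\Ni$ is arbitrary, ${||\nabla f||}_2=k$ on all of $M$.

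Most of this is bookkeeping once Theorem~\ref{small_codim} is available; the point requiring the most care is Step~3, where one must track the conformal factor $f^2$ relating the abstract metric $g_\perp$ on $\Na$ to the metric actually induced on a warped-product fiber, and confirm that $\nabla\ln f$ is genuinely a normal vector field of that fiber (constant along it) so that the Gauss equation applies directly.
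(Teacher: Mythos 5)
Your proof is correct, and for the first two assertions it coincides with the paper's: the equality case gives $\sigma(\D,\D)=\sigma(\Dp,\Dp)=\{0\}$, and composing this with the general warped-product facts that $\Ni$ is totally geodesic and the fibers are totally umbilical in $M$ yields both bullets. For the last statement you take a mildly different route. You compute the second fundamental form of a fixed fiber $\{x_0\}\times\Na$ in the flat ambient space directly, $\sigma'(Z,W)=-g(Z,W)\,\nabla\ln f$, apply the Gauss equation to the fiber itself to conclude that it has constant curvature ${||\nabla\ln f||}_2$, and compare with the curvature $k/f^2(x_0)$ of the rescaled metric $f^2(x_0)g_\perp$. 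The paper instead invokes the intrinsic warped-product curvature formula (O'Neill) relating $R$ of $M$ to $R^\perp$ of $\Na$, and applies the Gauss equation to $M\subset\p^{h+p}$, where $\sigma(\Dp,\Dp)=0$ annihilates the right-hand side; both computations produce ${||\nabla\ln f||}_2=k/f^2$, hence ${||\nabla f||}_2=k$. Your version trades the citation of O'Neill's formula for explicit bookkeeping of the conformal factor $f^2(x_0)$ and of the normal field $\nabla\ln f$ along the fiber; note only the constancy of its norm along the fiber is needed (not literal constancy as an ambient vector), and that holds because $f$ depends only on the $\Ni$-coordinate, so the argument goes through.
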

\proof 
Under the hypothesis, we know from the proof of Theorem \ref{ineq:th} that the second fundamental form satisfies
$$\sigma(\mathcal D,\mathcal D)=\sigma(\mathcal D^\perp,\mathcal D^\perp)=\{0\}.$$ 
On the other hand, since $M=\Ni\times_f\Na$ is a warped product, $\Ni$ is totally geodesic and $\Na$ is totally umbilical in $M$.
Thus we have the first two statements.

The last statement of the proposition can be proved as follows.
If $R^\perp$ denotes the Riemann curvature tensor of $\Na$, then we have
$$
R_{ZV}W=R^\perp_{ZV}W-{||\nabla \ln f||}_2\big(g(V,W)Z-g(Z,W)V\big)
$$
for any $Z,V,W$ tangent to $\Na$. See for details \cite[page 210]{cm:ONeil} (pay attention to the sign; see also page 74).
If $\Na$ is a space form of constant curvature $k$, then $R$ takes the form
\begin{equation}
\label{eq:R_k}
R_{ZV}W=\left(\frac k{f^2}-{||\nabla\ln f||}_2\right)\big(g(V,W)Z-g(Z,W)V\big).
\end{equation}
The equation of Gauss may be written, for vectors tangent to $\Na$, as
$$
g\(R_{ZV}W,U\)=\<\widetilde R_{ZV}W,U\>+\<\sigma(V,W),\sigma(Z,U)\>-\<\sigma(Z,W),\sigma(V,U)\> \ .
$$
Since the ambient space is flat and \ $\sigma(\Dp,\Dp)=0$ due to the equality in \eqref{ineq_small:eq}, it follows that
$g(R_{ZV}W,U)=0$. Combining this with \eqref{eq:R_k} gives ${||\nabla\ln f||}_2=\frac k{f^2}$.  This gives
the statement.
\endproof

{\it Para-complex numbers} were introduced by  Graves in 1845 \cite{graves} as a generalization of complex numbers. Such numbers have the expression $v=x+\j y$, where $x,y$ are real numbers and $\j$ satisfies $\j^{2}=1,\,\j\ne 1$. 
The conjugate of $v$ is $\bar v=x-\j y$. The multiplication of two para-complex numbers is defined by
$$(a+\j b)(s+\j t)=(as+bt)+\j(at+bs).$$

For each natural number $m$, we put $\mathbb D^{m}=\{(x_1+\j y_1,\ldots,x_m+\j y_m) : x_i, y_i\in{\mathbb{R}}\}$. With respect to the  multiplication of  para-complex numbers and the canonical flat metric,  $\mathbb D^{m}$ is a flat para-K\"ahler manifold of dimension $2m$. 
Once we  identify $(x_1+\j y_1,\ldots,x_m+\j y_m)\in \mathbb D^{m}$ with $(x_1,\ldots,x_m,y_1,\ldots,y_m)\in {\mathbb{E}}^{2m}_m$, we may identify $\mathbb D^{m}$ with the para-K\"ahler $m$-plane $\p^{m}$ in a natural way.

In the following we denote by $\mathbb S^{p}, \mathbb E^{p}$ and  $\mathbb  H^{p}$ the unit $p$-sphere, the Euclidean $p$-space and the unit hyperbolic $p$-space, respectively.

\begin{theorem}
Let $\Ni\times_f\Na$ be a space-like $\p R$-warped product  in the  para-K\"ahler $(h+p)$-plane $\p^{h+p}$ with $h=\frac{1}{2}\dim \Ni$ and $p=\dim \Na$. Then we have \begin{align}\label{IN}S_\sigma\leq 2p{||\nabla\ln f||}_2.\end{align} 
The equality sign of \eqref{IN} holds identically if and only if  $\Ni$ is an open part of a para-K\"ahler $h$-plane, $\Na$ is an open part of $\mathbb S^{p},\, \mathbb E^{p}$ or $\mathbb H^{p}$,  and the immersion  is given by one of the following:

{\bf 1.} $\Phi:D_{1}\times_f {\mathbb{S}}^p\longrightarrow {\p}^{h+p}$;
\begin{equation}
\label{case1}
\begin{array}{c}
\Phi(z,w)=\text{\small$\Bigg($}z_1+\bar v_1(w_0-1)\sum\limits_{j=1}^hv_jz_j,\ldots,z_h+\bar v_h(w_0-1)\sum\limits_{j=1}^hv_jz_j,\\
\qquad w_1\sum\limits_{j=1}^h{\rm j} v_jz_j,\ldots,w_p\sum\limits_{j=1}^h\j v_jz_j \text{\small$\Bigg)$},\;\; h\geq 2,
\end{array}
\end{equation}
with warping function $$f=\sqrt{\langle \bar v,z\rangle^2-\langle \j \bar v,z\rangle^2},$$
where $v=(v_1,\ldots,v_h)\in{\mathbb{S}}^{2h-1}\subset {\mathbb{D}}^h$, $ w=(w_0,w_1,\ldots,w_p)\in{\mathbb{S}}^p$, $ z=(z_1,\ldots,z_h)\in D_{1}$ 
and $D_{1}=\left\{z\in{\mathbb{D}}^h : \langle \bar v,z\rangle^2>\langle \j \bar v,z\rangle^2\right\}$.

{\bf 2.} $\Phi:D_{1}\times_f {\mathbb{H}}^p\longrightarrow {\p}^{h+p}$;
\begin{equation}
\label{case2}
\begin{array}{c}
\Phi(z,w)=\text{\small$\Bigg($}z_1+\bar v_1(w_0-1)\sum\limits_{j=1}^hv_jz_j,\ldots,z_h+\bar v_h(w_0-1)\sum\limits_{j=1}^hv_jz_j,\\
\qquad w_1\sum\limits_{j=1}^h\j v_jz_j,\ldots,w_p\sum\limits_{j=1}^h\j v_jz_j \text{\small$\Bigg)$},\;\; h\geq 1,
\end{array}
\end{equation}
with the warping function $f=\sqrt{\langle \bar v,z\rangle^2-\langle \j \bar v,z\rangle^2}$, where $v=(v_1,\ldots,v_h)\in{\mathbb{H}}^{2h-1}\subset {\mathbb{D}}^h$,
$w=(w_0,w_1,\ldots,w_p)\in{\mathbb{H}}^p$  and 
$z=(z_1,\ldots,z_h)\in D_{1}$.

{\bf 3.} $\Phi(z,u):D_{1}\times_f {\mathbb{E}}^p\longrightarrow {\p}^{h+p}$;
\begin{equation}
\label{case3}
\begin{array}{c}
\Phi(z,u)=\text{\small$ \Bigg($}z_1+\dfrac{\bar v_1}{2}\Big(\sum\limits_{a=1}^pu_a^2\Big)\sum\limits_{j=1}^hv_jz_j,\ldots,
        z_h+\dfrac{\bar v_h}{2}\Big(\sum\limits_{a=1}^pu_a^2\Big)\sum\limits_{j=1}^hv_jz_j,\\
        u_1\sum\limits_{j=1}^h\j v_jz_j,\ldots,u_p\sum\limits_{j=1}^h\j v_jz_j \text{\small$ \Bigg)$},\;\; h\geq 2,
\end{array}
\end{equation}
with the warping function $f=\sqrt{\langle \bar v,z\rangle^2-\langle \j \bar v,z\rangle^2}$, where $v=(v_1,\ldots,v_h)$ is a light-like vector in ${\mathbb{D}}^h$,
$z=(z_1,\ldots,z_h)\in D_{1}$ and $u=(u_1,\ldots,u_p)\in{\mathbb{E}}^p,$

Moreover, in this case, each leaf $\, {\mathbb{E}}^p$ is quasi-minimal in ${\p}^{h+p}$.

{\bf 4.} $\Phi(z,u):D_{2}\times_f {\mathbb{E}}^p\longrightarrow {\p}^{h+p}$;
\begin{equation}
\label{case4}
\begin{array}{c}  \Phi(z,u)=\text{\small$\Bigg( $}z_1+\dfrac{v_1}{2}\! \sum\limits_{a=1}^pu_a^2,\ldots,
        z_h+\dfrac{v_h}{2}\!\sum\limits_{a=1}^pu_a^2,
        \dfrac{v_0}{2}u_1,\ldots,\dfrac{v_0}{2}u_p \text{\small$\Bigg)$},\; h\geq 1, \end{array}
\end{equation}
with the warping function $f=\sqrt{-\langle v,z\rangle}$, where $v_0=\sqrt{b_1}+\epsilon\j\sqrt{b_1}$ with $b_1>0$,  $D_{2}=\{z\in{\mathbb{D}}^h:\langle v,z\rangle<0\}$,
$v=(v_1,\ldots,v_h)=(b_1+\epsilon\j b_1,\ldots,b_h+\epsilon\j b_h)$, $\epsilon=\pm1$,
$z=(z_1,\ldots,z_h)\in D_2$ and $ u=(u_1,\ldots,u_p)\in{\mathbb{E}}^p$.

In each of the four cases the warped product is minimal in ${\mathbb{E}}^{2(h+p)}_{h+p}$.
\end{theorem}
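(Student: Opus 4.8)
The plan is to establish the classification in two stages: first, reduce the problem to the PDE system of Proposition~\ref{PDEsyst}, and second, integrate the immersion explicitly from its solutions. By Proposition~\ref{prop_v}, if $M=\Ni\times_f\Na$ satisfies the equality case, then $\Ni$ sits as an open part of a para-K\"ahler $h$-plane, $\Na$ is totally umbilical (hence a real space form of curvature $k$), and $\|\nabla\ln f\|_2=k/f^2$; moreover $\sigma(\D,\D)=\sigma(\Dp,\Dp)=0$ by Theorem~\ref{small_codim}. The sign of $k$ distinguishes the sphere, Euclidean space, and hyperbolic space cases. So the core task is to reconstruct $\Phi$. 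I would choose local coordinates $(s_1,\dots,s_h,t_1,\dots,t_h)$ on $\Ni\cong\p^h$ adapted to the para-K\"ahler structure (so that the basis $\{X_i,PX_i\}$ corresponds to $\{\partial_{s_i},\partial_{t_i}\}$), and coordinates on $\Na$ given by the standard model of $\mathbb S^p$, $\mathbb E^p$, or $\mathbb H^p$. Writing $f=e^\psi$, the condition $\|\nabla\ln f\|_2=k/f^2$ together with the vanishing of the relevant components of $\sigma$ translates — via the Gauss and Codazzi equations for the flat ambient $\p^{h+p}$ and the warped-product connection formulas \eqref{warped_conn_eq} — into precisely the system \eqref{eq:pde1}--\eqref{eq:pde3} for $\psi$ on $\Ni$. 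This is where Proposition~\ref{PDEsyst} enters: its two families of solutions \eqref{eq:pde_sol1} and \eqref{eq:pde_sol2} correspond respectively to the ``timelike/spacelike $\mathbf v$'' situation (Cases 1--3, where $\langle\mathbf v,z\rangle^2-\langle\j\mathbf v,z\rangle^2$ appears, with the sign of $k$ controlled by whether $\mathbf v$ is non-null) and the degenerate/light-like case (Case 4, where $f=\sqrt{-\langle v,z\rangle}$ with $v$ light-like).

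\textbf{Second stage: integrating the immersion.} Having pinned down $\psi$, I would recover $\Phi$ by solving the first-order system coming from the Gauss and Weingarten formulas. Concretely, $\Phi$ must satisfy $\widetilde\nabla_{X}\widetilde\nabla_{Y}\Phi$-type relations encoding $\sigma(\D,\D)=0$, $\sigma(\Dp,\Dp)=0$, and $A_{FZ}X=(PX(\ln f))Z$ from \eqref{warped_prod_eq}; since the ambient connection is the flat one on $\mathbb E^{2(h+p)}_{h+p}$, all these become linear constant-coefficient ODE/PDE in the Euclidean coordinates of $\Phi$. Integrating along the $\Ni$-directions first gives the ``linear-in-$z$'' skeleton $z_i + (\text{correction})\cdot\bar v_i\sum_j v_j z_j$; integrating along the $\Na$-directions then produces the factors $w_0-1$, $\frac12\sum u_a^2$, $w_a$, or $u_a$, according to which space form $\Na$ is. The identification with para-complex coordinates $\mathbb D^h$ (so $v=(v_1,\dots,v_h)$, $\bar v$ its conjugate, $\j v$ the para-complex-structure image) makes the formulas \eqref{case1}--\eqref{case4} transparent: the invariant directions are the para-holomorphic ones, and $F=\j$ acting on $\p\Dp$. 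The warping functions $f=\sqrt{\langle\bar v,z\rangle^2-\langle\j\bar v,z\rangle^2}$ (Cases 1--3) and $f=\sqrt{-\langle v,z\rangle}$ (Case 4) are read off directly from $e^{2\psi}$ in \eqref{eq:pde_sol1}--\eqref{eq:pde_sol2}.

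\textbf{Verification and the two extra assertions.} With the explicit $\Phi$ in hand, I would verify by direct computation that each map is an isometric immersion inducing the warped metric $g_\top+f^2g_\perp$, that it is indeed a $\p R$-warped product with the stated $\D,\Dp$, and that the equality $S_\sigma=2p\|\nabla\ln f\|_2$ holds — equivalently, that $\sigma(\D,\D)=\sigma(\Dp,\Dp)=0$ — which reduces to checking that the second-order partials of $\Phi$ in the pure-$\D$ and pure-$\Dp$ directions are tangent to $M$. The remaining two claims are small residual computations: for the quasi-minimality of the $\mathbb E^p$-leaf in Case~3, one computes the mean curvature vector of a leaf $\{z\}\times\mathbb E^p$ and checks that it is nonzero but null (this uses that $v$ is light-like, so $\sum_j \j v_j z_j$ has zero norm in the neutral metric); for the minimality of the whole warped product in $\mathbb E^{2(h+p)}_{h+p}$, one traces the second fundamental form over the full orthonormal frame $\{X_i,PX_i,Z_a\}$ and uses $\sigma(\D,\D)=0$, $\sigma(\Dp,\Dp)=0$ together with the fact that $\sigma(\D,\Dp)\subseteq\p\Dp$ contributes a trace-free sum by \eqref{ddp_logf:eq} (the $PX_i(\ln f)$ and $X_i(\ln f)$ terms cancel in pairs because $\{X_i\}$ and $\{PX_i=X_{i*}\}$ carry opposite signs $\epsilon_i=1$, $\epsilon_{i*}=-1$).

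\textbf{Main obstacle.} The hard part will be the bookkeeping in the second stage — organizing the integration of $\Phi$ so that the para-complex structure emerges cleanly and the four cases are distinguished correctly by the nature (timelike, spacelike, or light-like) of the constant vector $\mathbf v$, and then the sign of the curvature $k$, all while keeping the neutral signature straight. In particular, matching the constraint $v\in\mathbb S^{2h-1}$ (resp.\ $\mathbb H^{2h-1}$, resp.\ light-like) with the sign of $\langle\bar v,z\rangle^2-\langle\j\bar v,z\rangle^2$ and with $k=+1,-1,0$, and then checking the dimension restrictions $h\geq 2$ versus $h\geq 1$, requires care; the $h=1$ degeneration in Cases~1 and~3 is exactly what forces those cases to start at $h\geq 2$. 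The PDE-solving itself is already done in Proposition~\ref{PDEsyst}, so no genuinely new analytic difficulty arises there; the work is in the translation both ways — from geometry to PDE and from PDE solution back to an explicit, verifiable immersion.
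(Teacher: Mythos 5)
Your proposal follows essentially the same route as the paper: use the equality case to force $\sigma(\D,\D)=\sigma(\Dp,\Dp)=0$ with $\Ni$ totally geodesic (hence an open part of $\p^h$) and $\Na$ totally umbilical (hence $\mathbb S^p$, $\mathbb H^p$, or a quasi-minimal flat $\mathbb E^p$), derive the PDE system \eqref{eq:pde1}--\eqref{eq:pde3} for $\psi=\ln f$ from the Gauss equation with the warped-product connection, invoke Proposition \ref{PDEsyst}, and then integrate the Gauss formula for $\Phi$ case by case with suitable initial conditions to reach \eqref{case1}--\eqref{case4}, verifying the converse and the quasi-minimality/minimality claims directly. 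The only details the paper handles that your outline glosses over are the separate treatment of $p=1$ (where $\|\nabla f\|_2$ is not pinned down by the curvature of a curve) and the fact that the second solution family \eqref{eq:pde_sol2} splits into a subcase giving item 4 and another that folds back into item 1, but these are refinements of the same argument.
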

\begin{proof} Inequality \eqref{IN} is already given in Theorem 4.4. From now on, let us assume that 
 $\Phi:\Ni\times_f\Na\longrightarrow{\p}^{m}$ is a space-like $\p R$-warped product satisfying the
equality in \eqref{IN} with $m=h+p$. Then it follows that $\nu=0$ and hence
\begin{equation}
\label{eq:tgtu}
\sigma(X,Y)=0,\ \sigma(Z,W)=0,\  \sigma(X,Z)=\big(PX(\ln f)\big)FZ,
\end{equation}
for all $X,Y$ tangent to $\Ni$ and $Z,W$ tangent to $\Na$.
Thus, $\Ni$ is totally geodesic in ${\p}^m$ and $\Na$ is totally umbilical ${\p}^{m}$.

As $\Ni$ is invariant and totally geodesic in ${\p}^m$, it is congruent
with ${\p}^h$ with the canonical (induced) para-K\"ahler structure \cite{cm:Chen11}.
On ${\mathbb{E}}^{2h}_h$ we may choose global coordinates $s=(s_1,\ldots,s_h)$ and $t=(t_1,\ldots,t_h)$
such that
\begin{equation}
\label{A1}
g_\top=-\sum\limits_{i=1}^hds_i^2+\sum\limits_{i=1}^hdt_i^2,\;\; \p\dsi=\dti,\;\; \p\dti=\dsi,
\end{equation}
 for $i=1,\ldots,h$. 
 
 Let us put $\dsi=\frac\partial{\partial s_i}\ ,$
$\dti=\frac\partial{\partial t_i}$ and so on.

\smallskip

Now, we study the case $p>1$.

\smallskip

Since $\Na$ is a space-like totally umbilical, non-totally geodesic submanifold in ${\p}^m$,
it is congruent (cf. \cite{cm:AKK96}, \cite[Proposition 3.6]{cm:Chen11})
\begin{itemize}
\item either to the Euclidean $p$-sphere ${\mathbb{S}}^p$,
\item or to the hyperbolic $p$-plane ${\mathbb{H}}^p$,
\item or to a flat quasi-minimal submanifold ${\mathbb{E}}^p$.
\end{itemize}

In what follows we discuss successively, all the three situations.

On ${\mathbb{S}}^p$ we consider spherical coordinates $u=(u_1,\ldots,u_p)$ such that
the metric $g_\perp$ is expressed by
\begin{equation}
\label{A3}
g_\perp=du_1^2+\cos^2u_1du_2^2+\ldots+\cos^2u_1\ldots\cos^2u_{p-1}du_p^2.
\end{equation}
Thus, the warped metric on $M$ is given by
$$
g=g_\top(s,t)+f^2(s,t)g_\perp(u).
$$
Then the Levi Civita connection $\nabla$ of $g$ satisfies
\begin{subequations}
\renewcommand{\theequation}{\theparentequation .\alph{equation}}
\label{eq:LCSp}
\begin{align}
\label{A5} & \nabla_{\dsi}\dsj=0\ ,\ \nabla_{\dsi}\dtj=0\ ,\ \nabla_{\dti}\dtj=0,\\
\label{A6} &\nabla_{\dsi}\dua=\frac{f_{s_i}}f~\dua\, ,\ \nabla_{\dti}\dua=\frac{f_{t_i}}f~\dua,\\
\label{A7} &\nabla_{\dua}\dub=-\tan u_a\dub\quad(a<b),\\
\label{A8} & \nabla_{\dua}\dua=\prod\limits_{b=1}^{a-1}\cos^2u_b\sum\limits_{i=1}^h\big(ff_{s_i}\dsi-ff_{t_i}\dti\big)\\
\nonumber & \qquad\qquad + \sum\limits_{b=1}^{a-1}\big(\sin u_b\cos u_b\cos^2u_{b+1}\ldots\cos^2u_{a-1}\big)\dub,
\end{align}
\end{subequations}
for $ i,j =1,\ldots,h$ and $a, b=1,\ldots,p$.

From now on we put $\psi=\ln f$.
Using the relations above, we find that the Riemann curvature  tensor $R$ satisfies
\begin{equation}
\label{eq:curb}
\begin{array}{l}
R(\dsi,\dua)~\dsj=\left(\dfrac{\partial^2\psi}{\partial{s_i}\partial{s_j}}+\dfrac{\partial\psi}{\partial{s_i}}\dfrac{\partial\psi}{\partial{s_j}}\right)\dua\\[2mm]
R(\dsi,\dua)~\dtj=\left(\dfrac{\partial^2\psi}{\partial{s_i}\partial{t_j}}+\dfrac{\partial\psi}{\partial{s_i}}\dfrac{\partial\psi}{\partial{t_j}}\right)\dua\\[2mm]
R(\dti,\dua)~\dtj=\left(\dfrac{\partial^2\psi}{\partial{t_i}\partial{t_j}}+\dfrac{\partial\psi}{\partial{t_i}}\dfrac{\partial\psi}{\partial{t_j}}\right)\dua\ .
\end{array}
\end{equation}
Moreover we have
$$
\sigma(\dsi,\dua)=\frac{\partial\psi}{\partial{t_i}}~F\dua,\ \sigma(\dti,\dua)=\frac{\partial\psi}{\partial{s_i}}~F\dua.
$$
Applying Gauss' equation we find
$$
\widetilde g\(\widetilde R_{XZ}Y,W\)=g\(R_{XZ}Y,W\)+\widetilde g\(\sigma(X,Y),\sigma(Z,W)\)-\widetilde g\(\sigma(X,W),\sigma(Y,Z)\),
$$  for $X,Y$ tangent to $\Ni$ and $Z,W$ tangent to $\Na$. Using \eqref{eq:tgtu} and \eqref{eq:curb} we get
\begin{equation}
\label{eq:warping}
\begin{array}{l}
\displaystyle
\frac{\partial^2\psi}{\partial{s_i}\partial{s_j}}+\frac{\partial\psi}{\partial{s_i}}\frac{\partial\psi}{\partial{s_j}}
            +\frac{\partial\psi}{\partial{t_i}}\frac{\partial\psi}{\partial{t_j}}=0\\[3mm]
\displaystyle
\frac{\partial^2\psi}{\partial{s_i}\partial{t_j}}+\frac{\partial\psi}{\partial{s_i}}\frac{\partial\psi}{\partial{t_j}}
            +\frac{\partial\psi}{\partial{t_i}}\frac{\partial\psi}{\partial{s_j}}=0\\[3mm]
\displaystyle
\frac{\partial^2\psi}{\partial{t_i}\partial{t_j}}+\frac{\partial\psi}{\partial{s_i}}\frac{\partial\psi}{\partial{s_j}}
            +\frac{\partial\psi}{\partial{t_i}}\frac{\partial\psi}{\partial{t_j}}=0,\ i=1,\ldots,h.
\end{array}
\end{equation}
Let us first consider the case $h\geq2$.

By applying Proposition \ref{PDEsyst} (case 1, in the proof), we know that there exists a constant vector
$v=(a_1,a_2,\ldots,a_h,0,b_2,\ldots,b_h)$, with $a_1>0$, such that
$$
\psi=\frac12\ln\left[\langle \bar v,z\rangle^2-\langle \j \bar v,z\rangle^2\right],
$$
where $z=(s_1,\ldots,s_h,t_1,\ldots,t_h)$ and $\langle~,~\rangle$ denotes the pseudo-Euclidean product in ${\mathbb{E}}^{2h}_h$.
If $a_1<0$ we are allowed to make the isometric transformation in ${\mathbb{E}}^{2h}_h$: $s_1\mapsto-s_1$ and $t_1\mapsto-t_1$.
In the sequel, we apply Gauss' formula
$$
\widetilde\nabla_{\Phi_*U}\Phi_*V=\Phi_*\nabla_UV+\sigma(U,V), \ \forall U,V\in\chi(M),
$$
where $\Phi_*$ denotes the differential of the map $\Phi$.
Taking $U,V\in\D$ we obtain
\begin{equation}
\label{embed_DD}
\begin{array}{l}
\displaystyle
\frac{\partial^2x_A}{\partial s_i\partial s_j}=\frac{\partial^2x_A}{\partial s_i\partial t_j}=\frac{\partial^2x_A}{\partial t_i\partial t_j}=0\\[3mm]
\displaystyle
\frac{\partial^2y_A}{\partial s_i\partial s_j}=\frac{\partial^2y_A}{\partial s_i\partial t_j}=\frac{\partial^2y_A}{\partial t_i\partial t_j}=0.
\end{array}
\end{equation}
For $U\in\D$ and $V\in\Dp$ we have
\begin{equation}
\label{embed_DDp}
\begin{array}{l}
\displaystyle
\frac{\partial^2x_A}{\partial s_i\partial u_a}=\psi_{s_i}\frac{\partial x_A}{\partial u_a}+\psi_{t_i}\frac{\partial y_A}{\partial u_a}\ ,\quad
\displaystyle
\frac{\partial^2x_A}{\partial t_i\partial u_a}=\psi_{t_i}\frac{\partial x_A}{\partial u_a}+\psi_{s_i}\frac{\partial y_A}{\partial u_a}\\[3mm]
\displaystyle
\frac{\partial^2y_A}{\partial s_i\partial u_a}=\psi_{s_i}\frac{\partial y_A}{\partial u_a}+\psi_{t_i}\frac{\partial x_A}{\partial u_a}\ ,\quad
\displaystyle
\frac{\partial^2y_A}{\partial t_i\partial u_a}=\psi_{t_i}\frac{\partial y_A}{\partial u_a}+\psi_{s_i}\frac{\partial x_A}{\partial u_a}.
\end{array}
\end{equation}
Finally, taking $U,V\in\Dp$ we obtain
\begin{equation}
\label{embed_DpDp}
\begin{array}{l}
\displaystyle
\frac{\partial^2x_A}{\partial u_a\partial u_b}=-\tan u_a \frac{\partial x_A}{\partial u_b}\ ,\quad
\frac{\partial^2y_A}{\partial u_a\partial u_b}=-\tan u_a \frac{\partial y_A}{\partial u_b}\ ,\ a<b\, ,\\[2mm]
\displaystyle
\frac{\partial^2x_A}{\partial u_a^2}=\prod\limits_{b=1}^{a-1}\cos^2u_b\sum\limits_{j=1}^h
        \left(ff_{s_j}\frac{\partial x_A}{\partial s_j}-ff_{t_j}\frac{\partial x_A}{\partial t_j}\right)\quad \\[2mm]
 \qquad \qquad     \displaystyle
         +\sum\limits_{b=1}^{a-1}\(\sin u_b\cos u_b\cos^2u_{b+1}\ldots\cos^2u_{a-1}\)\frac{\partial x_A}{\partial u_b}\, ,\\[2mm]
\displaystyle
\frac{\partial^2y_A}{\partial u_a^2}=\prod\limits_{b=1}^{a-1}\cos^2u_b\sum\limits_{j=1}^h
        \left(ff_{s_j}\frac{\partial y_A}{\partial s_j}-ff_{t_j}\frac{\partial y_A}{\partial t_j}\right)\quad \\[2mm]
 \qquad \qquad     \displaystyle
         +\sum\limits_{b=1}^{a-1}\(\sin u_b\cos u_b\cos^2u_{b+1}\ldots\cos^2u_{a-1}\)\frac{\partial y_A}{\partial u_b}\, .
\end{array}
\end{equation}

From \eqref{embed_DD} we get
\begin{equation}
\label{sol:DD}
\begin{array}{l}
x_A(s,t,u)=\sum\limits_1^h\lambda^j_A(u)s_j+\sum\limits_1^h\rho_A^j(u)t_j+C_A(u)\, ,\\[2mm]
y_A(s,t,u)=\sum\limits_1^h\tilde\rho_A^j(u)s_j+\sum\limits_1^h\tilde\lambda_A^j(u)t_j+\tilde C_A(u)\, .
\end{array}
\end{equation}

By combining \eqref{embed_DDp} with \eqref{sol:DD} we obtain
\begin{equation}
\label{eq:17}
\begin{array}{rl}
\dfrac{\partial\tilde\lambda_A^i}{\partial u_a}=\dfrac{\partial\lambda_A^i}{\partial u_a}&=
    \psi_{s_i}\left[\dfrac{\partial \lambda_A^j}{\partial u_a}(u)s_j+\dfrac{\partial \rho_A^j}{\partial u_a}(u)t_j+\dfrac{\partial C_A}{\partial u_a}\right]\\
  &\hskip.3in  +\psi_{t_i}\left[\dfrac{\partial \rho_A^j}{\partial u_a}(u)s_j+\dfrac{\partial \lambda_A^j}{\partial u_a}(u)t_j+\dfrac{\partial \tilde C_A}{\partial u_a}\right]\, ,\\[2mm]
\dfrac{\partial\tilde\rho_A^i}{\partial u_a}=\dfrac{\partial\rho_A^i}{\partial u_a}&=
    \psi_{t_i}\left[\dfrac{\partial \lambda_A^j}{\partial u_a}(u)s_j+\dfrac{\partial \rho_A^j}{\partial u_a}(u)t_j+\dfrac{\partial C_A}{\partial u_a}\right]\\
  & \hskip.3in +\psi_{s_i}\left[\dfrac{\partial \rho_A^j}{\partial u_a}(u)s_j+\dfrac{\partial \lambda_A^j}{\partial u_a}(u)t_j+\dfrac{\partial \tilde C_A}{\partial u_a}\right].
\end{array}
\end{equation}

For $i=1$ we have
$$
\begin{array}{l}
\psi_{s_1}=\dfrac{a_1\big(a_1s_1+\sum\limits_2^ha_js_j+\sum\limits_2^hb_jt_j\big)}
    {\big(a_1s_1+\sum\limits_2^ha_js_j+\sum\limits_2^hb_jt_j\big)^2-\big(a_1t_1+\sum\limits_2^ha_jt_j+\sum\limits_2^hb_js_j\big)^2}\, ,\\[2mm]
\psi_{t_1}=\dfrac{-a_1\big(a_1t_1+\sum\limits_2^ha_jt_j+\sum\limits_2^hb_js_j\big)}
    {\big(a_1s_1+\sum\limits_2^ha_js_j+\sum\limits_2^hb_jt_j\big)^2-\big(a_1t_1+\sum\limits_2^ha_jt_j+\sum\limits_2^hb_js_j\big)^2}\ .
\end{array}
$$
Substituting in \eqref{eq:17} we find polynomials in $s$ and $t$. Comparing the coefficients corresponding to
$s_1s_i$ and $s_1t_i$, $i>1$, we find
\begin{equation}
\label{eq:20-21}
\begin{array}{l}
\lambda_A^i(u)=\frac{a_i}{a_1}\lambda_A(u)+\frac{b_i}{a_1}\rho_A(u)+\frac{c_A^i}{a_1}\ ,\
\rho_A^i(u)=\frac{b_i}{a_1}\lambda_A(u)+\frac{a_i}{a_1}\rho_A(u)+\frac{d_A^i}{a_1}\
\end{array}
\end{equation}
for $\ i=2,\ldots,h$, and $\lambda_A^1(u)=\lambda_A(u)$, $\rho_A^1(u)=\rho_A(u)$, where $c_A^i,d_A^i\in{\mathbb{R}}$.

Comparing the coefficients of $s_1$ and $t_1$ we find that $C_A$ and $\tilde C_A$ are constants, and applying a
suitable translation in ${\mathbb{E}}^{2m}_m$ if necessary, one may suppose $C_A=0$ and $\tilde C_A=0$,  $A=1,\ldots,m$.
Replacing in \eqref{sol:DD} and taking into account \eqref{eq:17} we get
\begin{equation}
\label{sol:DDp}
\begin{array}{l}
x_A(s,t,u)=\frac1{a_1}\lambda_A(u)\(a_1s_1+\sum\limits_2^ha_js_j+\sum\limits_2^hb_jt_j\)\\[2mm]
    \qquad+    \frac1{a_1}\rho_A(u)\(a_1t_1+\sum\limits_2^ha_jt_j+\sum\limits_2^hb_js_j\)
     +\frac1{a_1}\(\sum\limits_2^hc_A^js_j+\sum\limits_2^hd_A^jt_j\),
     \qquad
\end{array}
\end{equation}
$$
\begin{array}{l}
y_A(s,t,u)=\frac1{a_1}\lambda_A(u)\(a_1t_1+\sum\limits_2^ha_jt_j+\sum\limits_2^hb_js_j\)\\[2mm]
    +    \frac1{a_1}\rho_A(u)\(a_1s_1+\sum\limits_2^ha_js_j+\sum\limits_2^hb_jt_j\)
     +\frac1{a_1}\(\tilde d_As_1+\tilde c_At_1+\sum\limits_2^h\tilde d_A^js_j+\sum\limits_2^h\tilde c_A^jt_j\),
\end{array}
$$
where $\tilde c_A$, $\tilde d_A$, $\tilde c_A^i$ and $\tilde d_A^i$ are real numbers.
The third equation in \eqref{embed_DpDp} for $a=1$ gives
$$
\begin{array}{l}
\dfrac{\partial^2x_A}{\partial u_1^2}=\(a_1s_1+\sum\limits_2^h a_js_j+\sum\limits_2^h b_jt_j\)\left[
        a_1\frac{\partial x_A}{\partial s_1}+\sum\limits_2^ha_j\frac{\partial x_A}{\partial s_j}-
        \sum\limits_2^hb_j\frac{\partial x_A}{\partial t_j}\right]\\[2mm]
   \qquad + \(a_1t_1+\sum\limits_2^h a_jt_j+\sum\limits_2^h b_js_j\)\left[
        a_1\frac{\partial x_A}{\partial t_1}+\sum\limits_2^ha_j\frac{\partial x_A}{\partial t_j}-
        \sum\limits_2^hb_j\frac{\partial x_A}{\partial s_j}\right]
\end{array}
$$
which combined with the first equation in \eqref{sol:DDp} yields
\begin{equation}
\label{S:u1u1}
\begin{array}{l}
\(a_1s_1+\sum\limits_2^h a_js_j+\sum\limits_2^h b_jt_j\)\left[
   \frac{\partial^2\lambda_A}{\partial u_1^2}(u)+\langle v,v\rangle \lambda_A(u)+D_A\right]\\[2mm]
+\(a_1t_1+\sum\limits_2^h a_jt_j+\sum\limits_2^h b_js_j\)\left[
   \frac{\partial^2\rho_A}{\partial u_1^2}(u)+\langle v,v\rangle \rho_A(u)+\tilde D_A\right]=0,
\end{array}
\end{equation}
where $D_A=\sum\limits_2^h(a_jc_A^j-b_jd_A^j)$ and $\tilde D_A=\sum\limits_2^h(a_jd_A^j-b_jc_A^j)$.

Since ${||\nabla f||}_2=-a_1^2-\sum\limits_2^ha_j^2+\sum\limits_2^hb_j^2$,
Proposition~\ref{prop_v} implies $\langle v,v\rangle=1$. Hence, considering in \eqref{S:u1u1}
the coefficients of $s_1$ and $t_1$ one obtains the following PDEs:
\begin{equation}
\frac{\partial^2\lambda_A}{\partial u_1^2}(u)+\lambda_A(u)-D_A=0,\quad
\frac{\partial^2\rho_A}{\partial u_1^2}(u)+\rho_A(u)-\tilde D_A=0.
\end{equation}
We immediately get
\begin{equation}
\label{lambda_rho}
\begin{array}{l}
\lambda_A(u)=\cos u_1\Theta_A^{(1)}(u_2,\ldots,u_p)+\sin u_1D_A^{(1)}(u_2,\ldots,u_p)+D_A, \\[2mm]
\rho_A(u)=\cos u_1\tilde\Theta_A^{(1)}(u_2,\ldots,u_p)+\sin u_1\tilde D_A^{(1)}(u_2,\ldots,u_p)+\tilde D_A
\end{array}
\end{equation}
where $\Theta_A^{(1)}$, $D_A^{(1)}$, $\tilde\Theta_A^{(1)}$ and $\tilde D_A^{(1)}$ are functions depending on $u_2,\ldots,u_p$.
The first equation in \eqref{embed_DpDp} for $a=1$ gives
$
\frac{\partial^2x_A}{\partial u_1\partial u_b}=-\tan u_1\frac{\partial x_A}{\partial u_b}\ ,\ b>1
$
which combined with \eqref{sol:DDp} yields
$$
\dfrac{\partial^2\lambda_A}{\partial u_1\partial u_b}=-\tan u_1\frac{\partial \lambda_A}{\partial u_b}\ ,\
\dfrac{\partial^2\rho_A}{\partial u_1\partial u_b}=-\tan u_1\frac{\partial \rho_A}{\partial u_b}\ .
$$
Using \eqref{lambda_rho}, we get
$\frac{\partial D_A^{(1)}}{\partial u_b}=0$, and $\frac{\partial\tilde D_A^{(1)}}{\partial u_b}=0$, $\forall b>1$, hence $D_A^{(1)}$ and $\tilde D_A^{(1)}$ are real constants.

Returning to the third equation in \eqref{embed_DpDp} with $a=2$ we get
$$
\begin{array}{l}
\dfrac{\partial^2x_A}{\partial u_2^2}=\cos^2u_1\(a_1s_1+\sum\limits_2^h a_js_j+\sum\limits_2^h b_jt_j\)\left[
        a_1\dfrac{\partial x_A}{\partial s_1}+\sum\limits_2^ha_j\dfrac{\partial x_A}{\partial s_j}-
        \sum\limits_2^hb_j\dfrac{\partial x_A}{\partial t_j}\right]\\[2mm]
   \qquad +\cos^2u_2 \(a_1t_1+\sum\limits_2^h a_jt_j+\sum\limits_2^h b_js_j\)\left[
        a_1\frac{\partial x_A}{\partial t_1}+\sum\limits_2^ha_j\dfrac{\partial x_A}{\partial t_j}-
        \sum\limits_2^hb_j\dfrac{\partial x_A}{\partial s_j}\right]\\[2mm]
     \qquad   +\sin u_1\cos u_1\dfrac{\partial x_A}{\partial u_1}\ .
\end{array}
$$
This relation together with \eqref{sol:DDp} yield a polynomial in $s$ and $t$, and considering the coefficients of $s_1$
and $t_1$ respectively, we obtain
$$
\begin{array}{l}
\dfrac{\partial^2\lambda_A}{\partial u_2^2}-\sin u_1\cos u_1\frac{\partial \lambda_A}{\partial u_1}+(\cos^2 u_1)\lambda_A-D_A\cos^2 u_1 =0\, , \\[2mm]
\dfrac{\partial^2\rho_A}{\partial u_2^2}-\sin u_1\cos u_1\frac{\partial \rho_A}{\partial u_1}+(\cos^2 u_1)\rho_A-\tilde D_A\cos^2 u_1 =0\, .
\end{array}
$$
Using \eqref{lambda_rho} one gets
$$
\frac{\partial ^2\Theta_A^{(1)}}{\partial u_2^2}+\Theta_A^{(1)}=0\ , \
\frac{\partial ^2\tilde \Theta_A^{(1)}}{\partial u_2^2}+\tilde\Theta_A^{(1)}=0
$$
with the solutions
$$
\begin{array}{l}
\Theta_A^{(1)}=\cos u_2\Theta_A^{(2)}(u_3,\ldots,u_p)+\sin u_2D_A^{(2)}(u_3,\ldots,u_p)\, ,\\[2mm]
\tilde\Theta_A^{(1)}=\cos u_2\tilde\Theta_A^{(2)}(u_3,\ldots,u_p)+\sin u_2\tilde D_A^{(2)}(u_3,\ldots,u_p),
\end{array}
$$
where $\Theta_A^{(2)}$, $D_A^{(2)}$, $\tilde\Theta_A^{(2)}$ and $\tilde D_A^{(2)}$ are functions depending on $u_3,\ldots,u_p$.
Continuing such procedure sufficiently many times, we find
\begin{equation}
\label{lambda_rho_1}
\begin{array}{l}
\lambda_A(u)=D_A^{(0)}\cos u_1\ldots\cos_{p-1}\cos u_p+
                  D_A^{(p)}\cos u_1\ldots\cos_{p-1}\sin u_p\quad\\[2mm]
              \quad  + D_A^{(p-1)}\cos u_1\ldots\sin_{p-1}+\ldots+
                  D_A^{(2)}\cos u_1\sin u_1+D_A^{(1)}\sin u_1+D_A\, ,\\[3mm]
\rho_A(u)=\tilde D_A^{(0)}\cos u_1\ldots\cos_{p-1}\cos u_p+
                  \tilde D_A^{(p)}\cos u_1\ldots\cos_{p-1}\sin u_p\quad\\[2mm]
              \quad  + \tilde D_A^{(p-1)}\cos u_1\ldots\sin_{p-1}+\ldots+
                  \tilde D_A^{(2)}\cos u_1\sin u_1+\tilde D_A^{(1)}\sin u_1+\tilde D_A\, ,
\end{array}
\end{equation}
where $D_A^{(p)},\ldots,D_A^{(0)}$, $D_A$,
$\tilde D_A^{(p)},\ldots,\tilde D_A^{(0)}$ and $\tilde D_A$ are real constants.
At this point let us make the following notations
$$
\begin{array}{lcl}
w_0 &=& \cos u_1\ldots\cos u_{p-1}\cos u_p\\[2mm]
w_p &=& \cos u_1\ldots\cos u_{p-1}\sin u_p\\[2mm]
w_{p-1} &=& \cos u_1\ldots\sin u_{p-1}\\
\ldots & \ldots & \ldots\ldots\ldots\ldots\ldots\\
w_2 &=& \cos u_1\sin u_2\\[2mm]
w_1 &=& \sin u_1.
\end{array}
$$
It follows that $\lambda_A$ and $\rho_A$ may be rewritten as
\begin{equation}
\label{lambda_rho_2}
%\begin{array}{l}
\lambda_A(w)=D_A+\sum\limits_{a=0}^pD_A^{(a)}w_a\, ,\quad
\rho_A(w)=\tilde D_A+\sum\limits_{a=0}^p\tilde D_A^{(a)}w_a.
%\end{array}
\end{equation}
Going back to \eqref{sol:DDp} we get, after a re-scaling with $a_1\neq0$
\begin{equation}
\label{xAyA}
\begin{array}{l}
x_A(s,t,w)=\(a_1s_1+\sum\limits_2^ha_js_j+\sum\limits_2^hb_jt_j\)\sum\limits_{a=0}^pD_A^{(a)}w_a\quad\\[2mm]
 \qquad  +   \(a_1t_1+\sum\limits_2^ha_jt_j+\sum\limits_2^hb_js_j\)\sum\limits_{a=0}^p\tilde D_A^{(a)}w_a+
        \sum\limits_{j=1}^h(\alpha_A^js_j+\beta_A^jt_j)\, , \\[3mm]
y_A(s,t,w)=\(a_1s_1+\sum\limits_2^ha_js_j+\sum\limits_2^hb_jt_j\)\sum\limits_{a=0}^p\tilde D_A^{(a)}w_a\quad\\[2mm]
 \qquad  +   \(a_1t_1+\sum\limits_2^ha_jt_j+\sum\limits_2^hb_js_j\)\sum\limits_{a=0}^p D_A^{(a)}w_a+
        \sum\limits_{j=1}^h(\tilde \alpha_A^js_j+\tilde\beta_A^jt_j).
\end{array}
\end{equation}
Let us choose the initial conditions
\begin{subequations}
\renewcommand{\theequation}{\theparentequation .\alph{equation}}
\label{init_cond}
\begin{align}
\label{IC1} & \Phi_*\partial_{s_i}(1,0,\ldots,0)=(0,\ldots,0,\stackrel{(i)}{1},0,\ldots,0,0,\ldots,0)\, ,\\
\label{IC2} & \Phi_*\partial_{t_i}(1,0,\ldots,0)=(0,\ldots,0,0,\ldots,\stackrel{(m+i)}{1},0,\ldots,0)\, ,\ i=1,\ldots,h\, ,\\
\label{IC3} & \Phi_*\partial_{u_b}(1,0,\ldots,0)=(0,\ldots,0,0,\ldots,\stackrel{(m+h+b)}{a_1 ,}0,\ldots,0)\, ,\ b=1,\ldots,p\,.
\end{align}
\end{subequations}
From \eqref{xAyA} and \eqref{IC3} and taking into account that
$$
\left.\frac{\partial w_a}{\partial u_b}\right|_{u=0}=\left\{
\begin{array}{l}
0, {\rm\ if\ } a=0\\
0, {\rm\ if\ } b\neq a, \ a\geq1\\
1, {\rm\ if\ } b=a\, ,
\end{array}\right.
$$
we obtain that
\begin{equation}
\label{22}
\begin{array}{l}
D_i^{(b)}=0,\ D_{h+a}^{(b)}=0,\ \tilde D_i^{(b)}=0,\ \tilde D_{h+a}^{(b)}=0, (a\neq b),\ \tilde D_{h+b}^{(b)}=1,\\[2mm]
\qquad\qquad\qquad  i=1,\ldots,h;\ a,b=1,\ldots,p.
\end{array}
\end{equation}
From \eqref{xAyA} and \eqref{IC1} we find
\begin{equation}
\label{23}
\begin{array}{l}
a_iD_j^{(0)}+b_i\tilde D_j^{(0)}+\alpha_j^i=\delta_{ij},\ a_iD_{h+a}^{(0)}+b_i\tilde D_{h+a}^{(0)}+\alpha_{h+a}^i=0\, ,\\[2mm]
a_i\tilde D_j^{(0)}+b_i D_j^{(0)}+\tilde\alpha_j^i=0,\ \ a_i\tilde D_{h+a}^{(0)}+b_i D_{h+a}^{(0)}+\tilde\alpha_{h+a}^i=0\, ,\\[2mm]
\qquad\qquad\qquad i,j=1,\ldots,h,\ a=1,\ldots,p,\ b_1=0.
\end{array}
\end{equation}
Finally, from \eqref{xAyA} and \eqref{IC2} we get
\begin{equation}
\label{24}
\begin{array}{l}
b_iD_j^{(0)}+a_i\tilde D_j^{(0)}+\beta_j^i=0,\ \ b_iD_{h+a}^{(0)}+a_i\tilde D_{h+a}^{(0)}+\beta_{h+a}^i=0\, ,\\[2mm]
b_i\tilde D_j^{(0)}+a_i D_j^{(0)}+\tilde\beta_j^i=\delta_{ij}, \ b_i\tilde D_{h+a}^{(0)}+a_i D_{h+a}^{(0)}+\tilde\beta_{h+a}^{i}=0\, ,\\[2mm]
\qquad\qquad\qquad i,j=1,\ldots,h,\ a=1,\ldots,p,\ b_1=0\, .
\end{array}
\end{equation}
Now, plugging \eqref{22}, \eqref{23} and \eqref{24} in \eqref{xAyA} we obtain
\begin{subequations}
\renewcommand{\theequation}{\theparentequation .\alph{equation}}
\label{xAyA1}
\begin{align}
\label{xD}
x_i(s,t,w)&=s_i+D_i^{(0)}(w_0-1)\(a_1s_1+\sum\limits_2^ha_js_j+\sum\limits_2^hb_jt_j\)\\
\nonumber    &  \quad+\tilde D_i^{(0)}(w_0-1)\(a_1t_1+\sum\limits_2^ha_jt_j+\sum\limits_2^hb_js_j\)\, ,
\end{align}
\begin{align} 
\label{xDp}
x_{h+a}(s,t,w) &= D_{h+a}^{(0)}(w_0-1)\(a_1s_1+\sum\limits_2^ha_js_j+\sum\limits_2^hb_jt_j\)\\
\nonumber    &  \ +\big[{w_a}+\tilde D_{h+a}^{(0)}(w_0-1)\big]\(a_1t_1+\sum\limits_2^ha_jt_j+\sum\limits_2^hb_js_j\)\, ,
\\
\label{yD}
y_i(s,t,w)&=t_i+ D_i^{(0)}(w_0-1)\(a_1t_1+\sum\limits_2^ha_jt_j+\sum\limits_2^hb_js_j\)\\
\nonumber    &  \quad+\tilde D_i^{(0)}(w_0-1)\(a_1s_1+\sum\limits_2^ha_js_j+\sum\limits_2^hb_jt_j\)\, ,
\\
\label{yDp}
y_{h+a}(s,t,w)&=D_{h+a}^{(0)}(w_0-1)\(a_1t_1+\sum\limits_2^ha_jt_j+\sum\limits_2^hb_js_j\)\\
\nonumber    &  \ +\big[{w_a}+\tilde D_{h+a}^{(0)}(w_0-1)\big]\(a_1s_1+\sum\limits_2^ha_js_j+\sum\limits_2^hb_jt_j\).
\end{align}
\end{subequations}
Since $\Phi$ is an isometric immersion we have $\tilde g(\Phi_* U,\Phi_* V)=g(U,V)$ for every $U$ and $V$ tangent to $M$.
From $\tilde g(\Phi_* \partial s_1,\Phi_* \partial s_1)=-1$ and \eqref{xAyA1} we get
$$
(w_0-1)\langle D^{(0)},D^{(0)}\rangle+2\sum\limits_{a=1}^pw_a\tilde D_{h+a}^{(0)}-\frac2{a_1}~D_1^{(0)}-(w_0+1)=0
$$
for all $w\in{\mathbb{S}}^p$, where
$$D^{(0)}=\(D_1^{(0)},\ldots,D_h^{(0)},D_{h+1}^{(0)},\ldots,D_{2h}^{(0)},
\tilde D_1^{(0)},\ldots,\tilde D_h^{(0)},\tilde D_{h+1}^{(0)},\ldots,\tilde D_{2h}^{(0)}\).
$$
Therefore
\begin{equation}
\label{isom11}
D_1^{(0)}=-{a_1}\, ,\ \tilde D_{h+a}^{(0)}=0, \forall a=1,\ldots,p,\ \langle D^{(0)},D^{(0)}\rangle=1\, .
\end{equation}
From $\tilde g(\Phi_*\partial s_1,\Phi_*\partial s_j)=0$ and $\tilde g(\Phi_*\partial s_1,\Phi_*\partial t_j)=0$, $(j\geq2)$,
together with \eqref{xAyA1} and \eqref{isom11} it follows
\begin{equation}
\label{isom1j}
D_j^{(0)}=-{a_j}-\frac{b_j}{a_1}~\tilde D_1^{(0)},\quad
\tilde D_j^{(0)}={b_j}+\frac{a_j}{a_1}~\tilde D_1^{(0)},\ \forall j\geq 2.
\end{equation}
Finally, from $\tilde g (\Phi_*\partial s_1,\Phi_*\partial u_b)=0$, \eqref{xAyA1} and \eqref{isom11} we get
$\tilde D_1^{(0)}=0$. Hence from \eqref{isom1j} one obtains
$D_j^{(0)}=-a_j$ and $\tilde D_j^{(0)}=b_j$, for all $j=1,\ldots,h$ (recall $b_1=0$),
which combined with $\langle D^{(0)},D^{(0)}\rangle=1$ yield $D_{h+a}^{(0)}=0.$

We conclude from \eqref{xAyA1} the following
\begin{equation}
\begin{array}{l}
x_i(s,t,w)=s_i-a_i(w_0-1)\(a_1s_1+\sum\limits_2^ha_js_j+\sum\limits_2^hb_jt_j\)\\
\qquad\qquad +b_i(w_0-1)\(a_1t_1+\sum\limits_2^ha_jt_j+\sum\limits_2^hb_js_j\),\\[2mm]
x_{h+a}(s,t,w)=w_a\(a_1t_1+\sum\limits_2^ha_jt_j+\sum\limits_2^hb_js_j\),
\end{array}
\end{equation}

\begin{equation}\notag
\begin{aligned}
&y_i(s,t,w)=t_i-a_i(w_0-1)\(a_1t_1+\sum\limits_2^ha_jt_j+\sum\limits_2^hb_js_j\)\\
&\qquad\qquad +b_i(w_0-1)\(a_1s_1+\sum\limits_2^ha_js_j+\sum\limits_2^hb_jt_j\),\\&
y_{h+a}(s,t,w)=w_a\(a_1s_1+\sum\limits_2^ha_js_j+\sum\limits_2^hb_jt_j\).
\end{aligned}
\end{equation}
Computing now $x_i+\j y_i$ and $x_{h+a}+\j y_{h+a}$ one gets \eqref{case1}.

\smallskip

Let consider the second situation when $\Na$ is the hyperbolic space ${\mathbb{H}}^p$.
On ${\mathbb{H}}^p$ consider coordinates $u=(u_1,u_2,\ldots,u_p)$ such that the metric $g_\perp$
is expressed by
\begin{equation}
\label{metricHp}
g_\perp=du_1^2+\sinh^2u_1\left(du_2^2+\cos^2u_2 du_3^2+\ldots+
          \cos^2u_2\ldots\cos^2u_{p-1}du_p^2\right),
\end{equation}
and the warped metric on $M$ is given by
$g=g_\top(s,t)+f^2(s,t)g_\perp(u). $
Then the Levi Civita connection $\nabla$ of $g$ satisfies
\begin{subequations}
\renewcommand{\theequation}{\theparentequation .\alph{equation}}
\label{eq:LC:Hp}
\begin{align}
\label{H4} & \nabla_{\dsi}\dsj=0\, ,\ \nabla_{\dsi}\dtj=0\, ,\ \nabla_{\dti}\dtj=0,\\
\label{H5} &\nabla_{\dsi}\dua=\frac{f_{s_i}}f~\dua\, ,\ \nabla_{\dti}\dua=\frac{f_{t_i}}f~\dua,\\
\label{H6} &\nabla_{\partial{u_1}}\dub=\coth u_1\dub\quad(1<b),\\
\label{H7} &\nabla_{\dua}\dub=-\tan u_a\dub\quad(1<a<b),
\\ \label{H8} &\nabla_{\partial_{u_1}}\partial_{u_1}=\sum\limits_{i=1}^h\big(ff_{s_i}\dsi-ff_{t_i}\dti\big),
\\\label{H9} & \nabla_{\dua}\dua=\sinh^2u_1\prod\limits_{b=2}^{a-1}\cos^2u_b\sum\limits_{i=1}^h\big(ff_{s_i}\dsi-ff_{t_i}\dti\big)\\
\nonumber & \qquad\qquad -\sinh u_1\cosh u_1\prod\limits_{b=2}^{a-1}\cos^2u_b~\partial_{u_1} \\
\nonumber & \qquad + \sum\limits_{b=1}^{a-1}\big(\sin u_b\cos u_b\cos^2u_{b+1}\ldots\cos^2u_{a-1}\big)\dub,\;\;  (1<a)
\end{align}
\end{subequations}
for any $ i,j =1,\ldots,h$ and $a, b=1,\ldots,p$.

In the following we will proceed in the same way as in previous case.
Since some computations are very similar we will skip them, and we will focus only on the major differences between the two cases.

The function $\psi$ is obtained from Proposition~\ref{PDEsyst} (case 1 in the proof):
$$
\psi=\frac12\ln\left[\langle \bar v,z\rangle^2-\langle \j \bar v,z\rangle^2\right],
$$
where $v=(a_1,a_2,\ldots,a_h,0,b_2,\ldots,b_h)$, with $a_1>0$ is a constant vector.

Applying  Gauss' formula $\widetilde \nabla_{\Phi_*U}\Phi_*V=\Phi_*\nabla_UV+\sigma(U,V)$
for $U,V\in\D$, respectively for $U\in\D$ and $V\in\Dp$ we may write \eqref{sol:DDp}.
Using  Gauss' formula for $U=V=\partial_{u_1}$, we find
$$
\begin{array}{l}
\dfrac{\partial\lambda_A}{\partial u_1^2}+\langle v,v\rangle\lambda_A-D_A=0\quad: \quad D_A=\sum a_jc^j_A-\sum b_j\tilde c^j_A\\[2mm]
\dfrac{\partial\rho_A}{\partial u_1^2}+\langle v,v\rangle\rho_A-\tilde D_A=0\quad: \quad D_A=\sum b_jc^j_A-\sum a_j\tilde c^j_A.
\end{array}
$$
Here $\langle v,v\rangle={||\nabla f||}_2=-1$ and consequently
\begin{equation}
\label{H13}
\begin{array}{l}
\lambda_A(u)=\cosh u_1 D_A^{(0)}(u_2,\ldots,u_p)+\sinh u_1 \Theta_A^{(0)}(u_2,\ldots,u_p)-D_A\, ,\\[2mm]
\rho_A(u)=\cosh u_1 \tilde D_A^{(0)}(u_2,\ldots,u_p)+\sinh u_1 \tilde \Theta_A^{(0)}(u_2,\ldots,u_p)-\tilde D_A.
\end{array}
\end{equation}
Taking $U=\partial_{u_1}$ and $V=\dub$, ($b>1$) we find that $D_A^{(0)}$ and $\tilde D_A^{(0)}$ are constants.
Next, applying the Gauss formula for $U=V=\partial_{u_2}$ and respectively for $U=\partial_{u_2}$ and $V=\dub$, ($b>2$)
we get
$$
\begin{array}{l}
\Theta_A^{(0)}=\cos u_2 \Theta_A^{(1)}(u_3,\ldots,u_p)+ D_A^{(1)}\sin u_2\, ,\\[2mm]
\tilde\Theta_A^{(0)}=\cos u_2 \tilde \Theta_A^{(1)}(u_3,\ldots,u_p)+ \tilde D_A^{(1)}\sin u_2,\quad D_A^{(1)},\tilde D_A^{(1)}\in{\mathbb{R}}.
\end{array}
$$
Continuing the procedure sufficiently many times we finally get
$$
\begin{array}{l}
\lambda_A=-D_A+D_A^{(0)}\cosh u_1+D_A^{(1)}\sinh u_1\cos u_2+D_A^{(2)}\sinh u_1\cos u_2\sin u_3+\cdots\\
 \qquad +D_A^{p-1)}\sinh u_1\cos u_2\cdots\cos u_{p-1}\sin u_p+D_A^{(p)}\sinh u_1\cos u_2\cdots \cos u_p\, , \\[2mm]
\rho_A=-\tilde D_A+\tilde D_A^{(0)}\cosh u_1+\tilde D_A^{(1)}\sinh u_1\cos u_2+\tilde D_A^{(2)}\sinh u_1\cos u_2\sin u_3+\cdots\\
 \qquad +\tilde D_A^{p-1)}\sinh u_1\cos u_2\cdots\cos u_{p-1}\sin u_p+\tilde D_A^{(p)}\sinh u_1\cos u_2\cdots \cos u_p.
\end{array}
$$
Considering the hyperbolic space ${\mathbb{H}}^p$ embedded in ${\mathbb{R}}^{p+1}_1$ with coordinates
\begin{equation}
\label{eq:wHp}
\begin{array}{l}
w_0=\cosh u_1\\
w_1=\sinh u_1\sin u_2\\
w_2=\sinh u_1\cos u_2\sin u_3\\
\ldots \ldots \ldots \\
w_{p-1}=\sinh u_1\cos u_2\ldots\cos u_{p-1}\sin u_p\\
w_p=\sinh u_1\cos u_2\ldots\cos u_{p-1}\cos u_p\, ,
\end{array}
\end{equation}
we may express $\lambda_A$ and $\rho_A$ in terms of $w=(w_0,w_1,\ldots,w_p)$:
\begin{equation}
\label{H17}
\begin{array}{l}
\lambda_A=-D_A+D_A^{(0)}w_0+D_A^{(1)}w_1+\ldots+D_A^{(p)}w_p\, ,\\[2mm]
\rho_A=-\tilde D_A+\tilde D_A^{(0)}w_0+\tilde D_A^{(1)}w_1+\ldots+\tilde D_A^{(p)}w_p\, .
\end{array}
\end{equation}
After a rescaling with the factor $a_1\neq0$ we may write
$$
\begin{array}{l}
x_A(s,t,w)=\(a_1s_1+\sum\limits_2^ha_js_j+\sum\limits_2^hb_jt_j\)\sum\limits_{a=0}^pD_A^{(a)}w_a\quad\\[2mm]
 \qquad  +   \(a_1t_1+\sum\limits_2^ha_jt_j+\sum\limits_2^hb_js_j\)\sum\limits_{a=0}^p\tilde D_A^{(a)}w_a+
        \sum\limits_{j=1}^h(\alpha_A^js_j+\beta_A^jt_j)\, ,\\[3mm]
y_A(s,t,w)=\(a_1s_1+\sum\limits_2^ha_js_j+\sum\limits_2^hb_jt_j\)\sum\limits_{a=0}^p\tilde D_A^{(a)}w_a\quad\\[2mm]
 \qquad  +   \(a_1t_1+\sum\limits_2^ha_jt_j+\sum\limits_2^hb_js_j\)\sum\limits_{a=0}^p D_A^{(a)}w_a+
        \sum\limits_{j=1}^h(\tilde \alpha_A^js_j+\tilde\beta_A^jt_j)
\end{array}
$$
which is similar to \eqref{xAyA}. From now on we will put
\begin{equation}
\label{SandT}
S=a_1s_1+\sum\limits_2^ha_js_j+\sum\limits_2^hb_jt_j
\quad {\rm and} \quad
T=a_1t_1+\sum\limits_2^ha_jt_j+\sum\limits_2^hb_js_j.
\end{equation}

Choose the initial point $s_{\rm{init}}(1,0,\ldots,0)$, $t_{\rm{init}}=(0,0,\ldots,0)$, $u_{\rm{init}}=(\omega,0,\ldots,0)$
with $\omega\neq0$ and the initial conditions
$$
\begin{array}{l}
 \Phi_*\partial_{s_i}(1,0,\cdots,0,\omega,0,\cdots,0)=(0,\cdots,0,\stackrel{(i)}{1},0,\cdots,0,0,\cdots,0)\,,\\
 \Phi_*\partial_{t_i}(1,0,\cdots,0,\omega,0,\cdots,0)=(0,\cdots,0,0,\cdots,\stackrel{(m+i)}{1},0,\cdots,0)\, ,\ i=1,\cdots,h\, ,\\
 \Phi_*\partial_{u_1}(1,0,\cdots,0,\omega,0,\cdots,0)=(0,\cdots,0,0,\cdots,\stackrel{(m+h+1)}{a_1 ,}0,\cdots,0)\, , \\
 \Phi_*\partial_{u_b}(1,0,\cdots,0,\omega,0,\cdots,0)=(0,\cdots,0,0,\cdots,\stackrel{(m+h+b)}{a_1\sinh\omega ,}0,\cdots,0),\ b=2,\cdots,p\,.
\end{array}
$$
A straightforward computations, similar to previous case, yield
$$
\begin{array}{l}
x_i(s,t,w)=s_i+a_i\big(W_0-1\big){S}-   b_i\big(W_0-1\big){T}\, ,\\[2mm]
x_{h+1}(s,t,w)=W_p{T},\;\;
x_{h+a}(s,t,w)=w_{a-1}{T}\ , \;\; a=2,\ldots,p\, ,\\[2mm]
y_i(s,t,w)=t_i+a_i\big(W_0-1\big){T}-
    b_i\big(W_0-1\big){S}\, ,\\[2mm]
y_{h+1}(s,t,w)=W_p{S}\, ,\;\; 
y_{h+a}(s,t,w)=w_{a-1}{S}\ , \;\; a=2,\ldots,p\, ,
\end{array}
$$
where $W_0=w_0\cosh\omega-w_p\sinh\omega$ and $W_p=-w_0\sinh\omega+w_p\cosh\omega$.
Moreover, since $W_0^2-W_p^2=w_0^2-w_p^2$, it follows $(W_0,w_1,\ldots,w_{p-1},W_p)\in{\mathbb{H}}^p$ and after a re-notation
we write
$$
\begin{array}{l}
x_i(s,t,w)=s_i+a_i\big(w_0-1\big){S}-    b_i\big(w_0-1\big){T}\, ,\\[2mm]
x_{h+a}(s,t,w)=w_{a}{T}\, , \quad a=1,\ldots,p\, ,\\[2mm]
y_i(s,t,w)=t_i+a_i\big(w_0-1\big){T}-
    b_i\big(w_0-1\big){S}\, ,\\[2mm]
y_{h+a}(s,t,w)=w_{a}{S}\, , \quad a=1,\ldots,p\, ,
\end{array}
$$
where $(w_0,w_1,\ldots,w_p)\in{\mathbb{H}}^p$.
Computing now $x_i+\j y_i$ and $x_{h+a}+\j y_{h+a}$ gives \eqref{case2}.

\smallskip

Let consider the third situation when $\Na$ is the flat space ${\mathbb{E}}^p$, on which
we take coordinates $u=(u_1,u_2,\ldots,u_p)$ such that the metric $g_\perp$
is expressed by
\begin{equation}
\label{metricEp}
g_\perp=du_1^2+\ldots+du_p^2.
\end{equation}
Then the warped metric on $M$ is given by
$g=g_\top(s,t)+f^2(s,t)g_\perp(u).$
Then the Levi Civita connection $\nabla$ of $g$ satisfies
\begin{subequations}
\renewcommand{\theequation}{\theparentequation .\alph{equation}}
\label{eq:LC:Ep}
\begin{align}
\label{F4} & \nabla_{\dsi}\dsj=0\ ,\ \nabla_{\dsi}\dtj=0\ ,\ \nabla_{\dti}\dtj=0\, ,\\
\label{F5} &\nabla_{\dsi}\dua=\frac{f_{s_i}}f~\dua\, ,\ \nabla_{\dti}\dua=\frac{f_{t_i}}f~\dua,\\
\label{F7} &\nabla_{\dua}\dub=0\, ,\ (a\neq b)\,,\\
\label{F8} &\nabla_{\partial_{u_a}}\partial_{u_a}=\sum\limits_{i=1}^h\big(ff_{s_i}\dsi-ff_{t_i}\dti\big)\, ,
\end{align}
\end{subequations}
for any $ i,j =1,\ldots,h$ and $a, b=1,\ldots,p$.

In the following we will proceed in the same way as in previous cases.
Again, we skip most computations, emphasizing only the major differences appearing in this situation.
The function $\psi$ is obtained from Proposition~\ref{PDEsyst} (case 1 in the proof):
$$
\psi=\frac12\ln\left[\langle \bar v,z\rangle^2-\langle \j \bar v,z\rangle^2\right],
$$
where $v=(a_1,\ldots,a_h,0,t_2,\ldots,t_h)$, $a_1>0,$ is a constant vector.
Applying  Gauss' formula $\widetilde \nabla_{\Phi_*U}\Phi_*V=\Phi_*\nabla_UV+\sigma(U,V)$
for $U,V\in\D$, respectively for $U\in\D$ and $V\in\Dp$ we may write \eqref{sol:DDp}.
Using Gauss' formula for $U=V=\partial_{u_1}$, we find
$$
\begin{array}{l}
\dfrac{\partial\lambda_A}{\partial u_1^2}+\langle v,v\rangle\lambda_A-D_A=0\quad: \quad D_A=\sum a_jc^j_A-\sum b_j\tilde c^j_A\\[2mm]
\dfrac{\partial\rho_A}{\partial u_1^2}+\langle v,v\rangle\rho_A-\tilde D_A=0\quad: \quad D_A=\sum b_jc^j_A-\sum a_j\tilde c^j_A\, .
\end{array}
$$
Here $\langle v,v\rangle={||\nabla f||}_2=0$.
Taking $U=\partial_{u_1}$ and $V=\dub$ ($b>1$) we find that $\frac{\partial^2\lambda_A}{\partial u_1\partial u_b}=0$
and $\frac{\partial^2\rho_A}{\partial u_1\partial u_b}=0$.
As consequence,
$$
\begin{array}{l}
\lambda_A(u)=\frac{D_A}2~u_1^2+D_A^{(1)}u_1+\Theta_A^{(1)}(u_2,\ldots,u_p)\, ,\\[2mm]
\rho_A(u)=\frac{\tilde D_A}2~u_1^2+\tilde D_A^{(1)}u_1+\tilde \Theta_A^{(1)}(u_2,\ldots,u_p)\, ,
\end{array}
$$
where $D_A^{(1)},\tilde D_A^{(1)}$ are constants.
Continuing the computations in the same manner it turns that
\begin{equation}
\label{F13}
\begin{array}{l}
\lambda_A(u)=\frac{D_A}2~\sum\limits_{a=1}^pu_a^2+\sum\limits_{a=1}^pD_A^{(a)}u_a+D_A^{(0)},\\[2mm]
\rho_A(u)=\frac{\tilde D_A}2~\sum\limits_{a=1}^pu_a^2+\sum\limits_{a=1}^p\tilde D_A^{(a)}u_a+\tilde D_A^{(0)},
\end{array}
\end{equation}
where $D_A^{(0)},\tilde D_A^{(0)}$ and $D_A^{(a)},\tilde D_A^{(a)}$, $a=1,\ldots,p$ are constants.
Choosing suitable initial conditions and taking into account the property of $\Phi$ to be isometric immersion, 
straightforward computations yield
\begin{equation}
\label{F18}
\begin{array}{l}
x_i=s_i+\frac12\big(a_i{S}   -b_i{T}\big){\sum\limits_{1}^pu_a^2}\, ,\;\; 
x_{h+b}=u_b{T}\, ,\\[2mm]
y_i=t_i+\frac12\big(a_i{T}-   b_i{S}\big){\sum\limits_{1}^pu_a^2}\, ,\;\; 
y_{h+b}=u_b{S}\, ,
\end{array}
\end{equation}
where $S$ and $T$ are as in \eqref{SandT}.
Computing now $x_i+\j y_i$ and $x_{h+b}+\j y_{h+b}$ one gets \eqref{case3}.
In the end, consider $\Na^0=\{(s_0,t_0)\}\times{\mathbb{E}}^p$, where $(s_0,t_0)$ is a fixed
point in ${\mathbb{E}}^{2h}_h$.
If $\sigma_\perp^0$ is the second fundamental form of $\Na^0$ in ${\mathbb{E}}^{2m}_m$,
we find ${||\sigma_\perp^0(\dua,\dua)||}_2=0.$ So, the mean curvature vector of $\Na^0$ is a light-like vector, so
it is nowhere zero.

If $h=1$, then $v=(a_1,0)$. Thus $||v||_2<0$. Hence, $N_\perp$ is an open part of  the hyperbolic space $\mathbb H^p$. So, we obtain item {\bf 2}.

\smallskip

Let us now consider the case $p=1$.
In this case $\Na$ is a curve, which can be supposed to be parameterized by the arc-length $u$. Hence its metric
is $g_\perp=du^2$.
We can make the same computations as in previous case such that \eqref{sol:DDp} holds. Yet, a first difference
appear: we are not able to say something about the value of ${||\nabla f||}_2=-\sum\limits_{i=1}^ha_i^2+\sum\limits_{i=1}^hb_i^2$.

Using as usual  Gauss' formula (for $U=V=\dua$) one gets
$$
\frac{\partial^2\lambda_A}{\partial u^2}=\langle v,v\rangle\lambda_A+D_A\, ,
\quad
\frac{\partial^2\rho_A}{\partial u^2}=\langle v,v\rangle\rho_A+\tilde D_A\, ,
$$
where $D_A, \tilde D_A\in{\mathbb{R}}$.
Since $\langle v,v\rangle=-\sum\limits_{i=1}^ha_i^2+\sum\limits_{i=1}^hb_i^2$ is an arbitrary constant, we have to distinguish three different cases:
{\bf Case (i)} $\langle v,v\rangle=-r^2$,
{\bf Case (ii)} $\langle v,v\rangle=r^2$ and
{\bf Case (iii)} $\langle v,v\rangle=0$ ($r>0$).

Solving the ordinary differential equations and doing the computations in the same manner as in the case when $p>1$, and after a re-scaling of the
vector $v$, we obtain the first three cases stated in the theorem.

\smallskip

At this point we recall that the PDE system in Proposition~\ref{PDEsyst} has also other solutions.
When Case 2a from the proof is considered, doing similar computations we easily get item {\bf 4} of the theorem.

Much more interesting is to consider Case 2b in the proof of Proposition~\ref{PDEsyst}.
We have to examine again the three situations, namely when  $\Na$ is ${\mathbb{S}}^p$,
${\mathbb{H}}^p$ or ${\mathbb{E}}^p$. In the following we give only few details for the case $M={\mathbb{E}}^{2h}_h\times_f{\mathbb{S}}^p$,
the other two being very similar. Here the warping function is $f=\sqrt{AB}$, where
$$
A=\sum\limits_{k=1}^ha_k(s_k+\epsilon t_k)\ ,\quad
B=\sum\limits_{k=1}^hb_k(s_k-\epsilon t_k)\, ,
$$
$\epsilon=\pm1$, $a_1=0$, $b_1=1$, $a_2\neq0$. Moreover, by Proposition~\ref{prop_v} we get
$\sum\limits_{k=1}^ha_kb_k=-1$.

Direct computations, analogue to those done in the first part of the proof, yield
\begin{equation}
\label{F18}
\begin{array}{l}
x_i=s_i+\dfrac{w_0-1}2\big(b_i{A}+a_i{B}\big)\, ,\;\;
x_{h+b}=\dfrac{u_b}2\big(A-B)\, ,\\[2mm]
y_i=t_i+\epsilon \dfrac{w_0-1}2\big(b_i{A}-a_i{B}\big)\, ,\;\; 
x_{h+b}=\epsilon\dfrac{u_b}2\big(A+B)\, ,
\end{array}
\end{equation}
where $(w_0,w_1,\ldots,w_p)\in{\mathbb{S}}^p$.
Put $v_k=\frac\epsilon2(a_k+b_k)+\frac 12\j(a_k-b_k)$. We have $\langle v,v\rangle=1$,
where $v=(v_1,\ldots,v_p)$.
Computing $x_i+\j y_i$ and $x_{h+b}+\j y_{h+b}$ we obtain
\eqref{case1}. Moreover, the warping function could be written as
$f=\sqrt{\langle \bar v,z\rangle^2-\langle \j \bar v,z\rangle^2}$. So, we obtain again item {\bf 1} of the theorem.

The converse follows  from direct computations.
\end{proof}
%%%%%%%%%%%%%%%%%%%%%%%%%%%%%%%%%%%%%%%%%%%%%%%%%%%%%%%%%%%%%%%%%%%%%%%%%%%%%%%%%%%%%%%%%%%
%%%%%%%%%%%%%%%%%%%%%%%%%% END PROOF %%%%%%%%%%%%%%%%%%%%%%%%%%%%%%%%%%%%%%%%%%%%%%%%%%%%%%
%%%%%%%%%%%%%%%%%%%%%%%%%%%%%%%%%%%%%%%%%%%%%%%%%%%%%%%%%%%%%%%%%%%%%%%%%%%%%%%%%%%%%%%%%%%

\begin{remark}
\rm In the case 3 of previous proof, if we choose
$(s_0,t_0)=(1,0,\ldots,0)$, and $v=(1,0,\ldots,0,\sqrt{3}+2\j)$,
we obtain the ``initia'' leaf $\Na^0$ given by
$$
\begin{array}{l}
\Phi(1,0,u)=\(1+\frac12\sum u_a^2,0,\ldots,0,\stackrel{(h)}{\frac{\sqrt{3}}2\sum u_a^2},0,\ldots,0,\stackrel{(m+h)}{-\sum u_a^2},u_1,\ldots,u_p)\, .
\end{array}
$$
After a translation along $x_1$ axis, followed by a rotation in the 2-plane $(x_1,x_h)$ of a suitable angle,
we obtain
$$
\begin{array}{l}
\Phi(1,0,u)=\(0,\ldots,0,\stackrel{(h)}{-\sum u_a^2},0,\ldots,0,\stackrel{(m+h)}{-\sum u_a^2},u_1,\ldots,u_p).
\end{array}
$$
which represents the submanifold given in \cite[Proposition 3.6]{cm:Chen11}  up to reordering of coordinates.
\end{remark}

\begin{remark} {\rm By applying the same method we may also classify all time-like $\p R$-warped products   $\Ni\times_f\Na$  in the  para-K\"ahler $(h+p)$-plane $\p^{h+p}$ satisfying $h=\frac{1}{2}\dim \Ni$, $p=\dim \Na$ and $S_\sigma= 2p{||\nabla\ln f||}_2$.}
\end{remark}

{\bf Acknowledgement.} The second author is supported by Fulbright Grant no. 498/2010 as a Fulbright Senior Researcher at Michigan State University, U.S.A.

%%%%%%%%%%%%%%%%%%%%%%%%%%%%%%%%%%%%%%%%%%%%%%%%%%%%%%%%%%%%%%%%%%%%%%%%%%%%%%%%%%%%%%%%%%%
%%%%%%%%%%%%%%%%%%%%%%%%%%%%%%%%%%%%%%%%%%%%%%%%%%%%%%%%%%%%%%%%%%%%%%%%%%%%%%%%%%%%%%%%%%%

\end{document}